\gdef\san{\mbox{\textbf{山}}}
\gdef\ten{\mbox{\textbf{天}}}
\renewcommand{\P}{\mathbb{P}}
\newcommand{\id}{\mathrm{id}}
\newcommand{\F}{\mathbb{F}}
\newcommand{\Frob}{\mathrm{Frob}}
\newcommand{\Q}{\mathbb{Q}}
\newcommand{\bep}{\mathbf{\upepsilon}}
\newcommand{\bmu}{\mathbf{\upmu}}
\newcommand{\Z}{\mathbb{Z}}
\newcommand{\C}{\mathbb{C}}
\DeclareMathOperator{\rank}{rank}
\DeclareMathOperator{\tr}{tr}
\DeclareMathOperator{\Out}{Out}
\DeclareMathOperator{\lcm}{lcm}
\DeclareMathOperator{\End}{End}
\DeclareMathOperator{\Hom}{Hom}
\DeclareMathOperator{\ord}{ord}
\DeclareMathOperator{\Gal}{Gal}
\DeclareMathOperator{\Br}{Br}
\newtheorem{theorem}{Theorem}
\newtheorem{lemma}[theorem]{Lemma}
\newtheorem{proposition}[theorem]{Proposition}
\newtheorem{corollary}[theorem]{Corollary}
\newtheorem{theorem*}{Theorem}
\newtheorem{lemma*}[theorem*]{Lemma}
\newtheorem{corollary*}[theorem*]{Corollary}
\newtheorem{conjecture}{Conjecture}
\theoremstyle{definition}
\newtheorem*{remark}{Remark}
\numberwithin{equation}{section}
\numberwithin{theorem}{section}
\title[Abelian varieties with constrained torsion]{Arithmetic of
  abelian varieties with constrained torsion}
\author[C.~Rasmussen]{Christopher Rasmussen}
\address{Wesleyan University, Middletown, Connecticut 06459, United States}
\email{crasmussen@wesleyan.edu}
\author[A.~Tamagawa]{Akio Tamagawa}
\address{Research Institute for Mathematical Sciences, Kyoto 606-8502, Japan}
\email{tamagawa@kurims.kyoto-u.ac.jp}
\begin{document}
\begin{CJK}{UTF8}{min}

\begin{abstract}
Let $K$ be a number field. We present several new finiteness results
for isomorphism classes of abelian varieties over $K$ whose
$\ell$-power torsion fields are arithmetically constrained for some
rational prime $\ell$. Such arithmetic constraints are related to an
unresolved question of Ihara regarding the kernel of the canonical
outer Galois representation on the pro-$\ell$ fundamental group of
$\P^1-\{0,1,\infty\}$.

Under GRH, we demonstrate the set of classes is finite for any fixed
$K$ and any fixed dimension. Without GRH, we prove a semistable
version of the result. In addition, several unconditional results are
obtained when the degree of $K/\Q$ and the dimension of abelian
varieties are not too large, through a careful analysis of the special
fiber of such abelian varieties. In some cases, the results (viewed as
a bound on the possible values of $\ell$) are uniform in the degree of
the extension $K/\Q$.
\end{abstract}

\maketitle

\section{Introduction}

\subsection{Introduction}

Let $\ell$ be a rational prime number, let $\bmu_N$ denote the $N$th
roots of unity, and $\bmu_{\ell^\infty} = \cup_{n\geq 1}
\bmu_{\ell^n}$. Set $\P^1_{01\infty} :=
\P^1_{\bar{\Q}}-\{0,1,\infty\}$. For a given number field $K$, we let
$\ten = \ten(K,\ell)$ denote the maximal pro-$\ell$ extension of
$K(\bmu_{\ell^\infty})$ which is unramified away from $\ell$. Let
$G_K$ denote the absolute Galois group $\Gal(\bar{K}/K)$, and consider
the natural outer Galois representation $\Phi \colon G_K \to \Out
\bigl( \pi_1^\ell ( \P^1_{01\infty}) \bigr)$. We let $\san =
\san(K,\ell)$ denote the subfield of $\bar{K}$ which is fixed by the
kernel of $\Phi$. Anderson and Ihara \cite{Anderson-Ihara:1988} have
shown that $\san$ is precisely the minimal field of definition
(containing $K$) of all curves appearing in the pro-$\ell$ tower of
(Galois) coverings of $\P^1$, branched only over
$\{0,1,\infty\}$. Moreover, they demonstrate many properties of
$\san$, including the containment $\san \subseteq \ten$.

Ihara has asked the following question (\cite{Ihara:1986}), which is
still open: For $K = \Q$, does $\san = \ten$? (The choice of the
notation is motivated as follows: the kanji $\ten$, read \emph{ten},
means ``heaven,'' and the kanji $\san$, read \emph{san}, means
``mountain.''  Both $\san$ and $\ten$ are infinite pro-$\ell$
extensions of $K(\bmu_\ell)$. Ihara's question is roughly as follows:
``Does the mountain reach the heavens?'')

There is a natural source for subextensions of $\ten$. Let $A$ denote
an abelian variety over $K$ which possesses good reduction away
from $\ell$. Then by the theory of Serre-Tate, the extension
$K(A[\ell^\infty])/K(A[\ell])$ is pro-$\ell$ and unramified away from
$\ell$; hence, in certain cases one finds $K(A[\ell^\infty]) \subseteq
\ten$. Reflecting on Ihara's question, it is natural to then study
whether or not $K(A[\ell^\infty]) \subseteq \san$. In several cases
where $A$ is the Jacobian variety of a curve $C$, this is known to
occur. For example, the containment holds for the following curves $C$
which appear in the pro-$\ell$ tower over $\P^1_{01\infty}$:
\begin{itemize}
\item Fermat curves and Heisenberg curves for any $\ell$
  \cite{Anderson-Ihara:1988},
\item Principal modular curves $X(2^n)$, $\ell = 2$
  \cite{Anderson-Ihara:1988},
\item Elliptic curves $E/\Q$, $\ell = 2$ \cite{Rasmussen:2004},
\item Elliptic curves $E/\Q$, $\ell = 3$
  \cite{Papanikolas-Rasmussen:2005}, 
\item Modular curves $X(3^n), X_0(3^n), X_1(3^n)$, $\ell = 3$
  \cite{Papanikolas-Rasmussen:2005}.
\end{itemize}
In addition, those elliptic curves $E/\Q$ with good reduction away
from $\ell$ and which have CM by $\Q(\sqrt{-\ell})$ are also known to
satisfy $K(E[\ell^\infty[) \subseteq \san$
\cite{Rasmussen-Tamagawa:2008}, although they do not lie in the
pro-$\ell$ tower over $\P^1_{01\infty}$.  

Despite the existence of these examples, the abelian varieties $A/K$
which satisfy $K(A[\ell^\infty]) \subseteq \ten$ appear to be quite
rare.  For an abelian variety $A/K$, let $[A]$ denote its
$K$-isomorphism class. For any fixed number field $K$, fixed integer
$g > 0$, and fixed rational prime number $\ell$, set
\begin{equation}
  \mathscr{A}(K,g,\ell) := \left\{ [A] : \dim A = g \mbox{ and }
    K(A[\ell^\infty]) \subseteq \ten \right\}. 
\end{equation}
For fixed $\ell$, this set is necessarily finite by the
Shafarevich Conjecture (Faltings' theorem). We also define
\begin{equation}
  \mathscr{A}(K,g) := \left\{ ([A], \ell) : [A] \in
    \mathscr{A}(K,g,\ell) \right\}.
\end{equation}
\begin{conjecture}\label{conj:AKg-finite}
  For any $K$ and $g$, the set $\mathscr{A}(K,g)$ is
  finite. Equivalently, the set $\mathscr{A}(K,g,\ell)$ is non-empty
  for only finitely many $\ell$.
\end{conjecture}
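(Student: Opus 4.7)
The plan is to extract rigid structural constraints on the mod-$\ell$ Galois representation $\bar\rho_{A,\ell}\colon G_K \to \mathrm{GL}(A[\ell])$ from the containment $K(A[\ell^\infty]) \subseteq \ten$, and then to contradict them for all but finitely many $\ell$ by comparing with Frobenius eigenvalues at auxiliary primes of bounded norm.

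Step 1 (unpacking the hypothesis). Because $\ten/K$ is unramified outside $\ell$, N\'eron--Ogg--Shafarevich forces $A$ to have good reduction at every prime of $K$ not above $\ell$. Because $\Gal(\ten/K(\bmu_{\ell^\infty}))$ and $\Gal(K(\bmu_{\ell^\infty})/K(\bmu_\ell))$ are both pro-$\ell$, the image of $G_{K(\bmu_\ell)}$ under $\bar\rho_{A,\ell}$ is an $\ell$-group in $\mathrm{GL}_{2g}(\F_\ell)$, conjugate into the unipotent upper-triangular subgroup. Consequently $\bar\rho_{A,\ell}$ lands in a Borel subgroup, and its semisimplification factors through the cyclic quotient $\Gal(K(\bmu_\ell)/K)$, forcing
\[
\bar\rho_{A,\ell}^{ss} \;\cong\; \bigoplus_{i=1}^{2g} \chi_\ell^{a_i}
\]
for some exponents $a_1,\dots,a_{2g} \in \Z/(\ell-1)$; the Weil pairing arranges them into $g$ pairs $\{a_i,\,1-a_i\}$.

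Step 2 (Frobenius comparison) and Step 3 (elimination). For each prime $\mathfrak{p}$ of $K$ not above $\ell$, write $P_\mathfrak{p}(T) \in \Z[T]$ for the characteristic polynomial of $\mathrm{Frob}_\mathfrak{p}$ on $T_\ell A$, and let $q = N\mathfrak{p}$. By Weil, $P_\mathfrak{p}$ is $\ell$-independent, of degree $2g$, with roots of absolute value $\sqrt{q}$; in particular, for fixed $\mathfrak{p}$ and $g$ it lies in a finite explicit list of polynomials in $\Z[T]$. Step~1 forces
\[
P_\mathfrak{p}(T) \;\equiv\; \prod_{i=1}^{2g}\bigl(T - q^{a_i}\bigr) \pmod \ell,
\]
with the \emph{same} exponents $(a_i)$ at every auxiliary $\mathfrak{p}$. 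Fixing a set $S = \{\mathfrak{p}_1,\dots,\mathfrak{p}_r\}$ of primes of $K$ of bounded norm, each fixed tuple of candidate polynomials $(P_{\mathfrak{p}_j})_j$ together with each fixed exponent pattern $(a_i)$ reduces the joint congruences to the assertion that $\ell$ divides a specific integer obtained by equating elementary symmetric functions of the Weil roots with elementary symmetric functions of $\{q_j^{a_i}\}$. Because no Weil integer of absolute value $\sqrt{q_j}$ is a rational-integer power of $q_j$, and because distinct $q_j$ above distinct rational primes are multiplicatively independent, these integers are nonzero once $r \geq 2$, and their gcd across the finite list of candidates is finite, yielding finiteness of $\ell$.

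The principal difficulty is \emph{uniformity}: the set $S$ and its norms must be controlled in terms of $K$ and $g$ alone, independently of both $A$ and $\ell$, while each $\mathfrak{p}_j \in S$ must also be a prime of good reduction for $A$. Under GRH, the Lagarias--Odlyzko effective Chebotarev density theorem supplies primes of polynomially bounded norm splitting in any prescribed manner, which furnishes $S$ and proves the conjecture unconditionally in $A$. Without GRH, one retreats either to semistable $A$ -- where additional control over the ramification of $\bar\rho_{A,\ell}$ at primes above $\ell$ weakens the required Chebotarev input to something unconditional -- or to small values of $g$ and $[K:\Q]$, where the special fiber of the N\'eron model of $A$ can be classified directly. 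These two retreats are the main unconditional theorems of the paper.
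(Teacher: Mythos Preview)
Your Step~1 is correct and matches the paper's structure lemma. Your concluding paragraph also correctly identifies that the conjecture is proved only under GRH in general, and unconditionally only in the semistable case and for small $(g,[K:\Q])$. The gap lies in Steps~2--3.

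The exponent pattern $(a_1,\dots,a_{2g})$ is \emph{not} drawn from a finite list independent of $\ell$: the $a_i$ live in $\Z/(\ell-1)\Z$, so the number of candidate patterns grows with $\ell$. For a lift $a_i \in \{0,\dots,\ell-2\}$ the ``specific integer'' you describe --- a difference of elementary symmetric functions of the Weil roots and of the $q_j^{a_i}$ --- has size comparable to $q_j^{(\ell-1)k}$, so its nonvanishing gives no bound on $\ell$, and the ``gcd across the finite list of candidates'' is not well-defined. Multiplicative independence of $q_1,q_2$ does not help either: eliminating the unknown exponents from congruences at two norms is not a polynomial elimination, since $q_1^{a_i}$ and $q_2^{a_i}$ are not algebraically related.

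The paper's route is to bound the $a_i$ \emph{before} making the Frobenius comparison, via the inertia action at primes $\lambda \mid \ell$. Over a totally ramified extension of degree $e_\lambda \le M'(2g)\,n_K$ where $A$ acquires semistable reduction, Tate--Oort theory gives $\chi^{i_r}|_{J_\lambda} = \psi_\lambda^{j_{\lambda,r}}$ with $j_{\lambda,r} \in [0,e_\lambda]$, a \emph{bounded} range. Applying your Step~2 to $\Frob_{\mathfrak p_0}^{e_\lambda}$ rather than to $\Frob_{\mathfrak p_0}$ then compares two polynomials whose coefficients are bounded independently of $\ell$, and for $\ell > C_7(g,n_K)$ one deduces $j_{\lambda,r} = e_\lambda/2$ for every $r$. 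It follows that every $\varepsilon_{r,s} = \chi^{i_r+i_s-1}$ has order dividing a fixed integer $m_\Q \mid \tfrac{1}{2} M'(2g)\,n_K$. The contradiction then comes from producing --- via effective Chebotarev under GRH, or via Elliott's and Goldfeld's theorems in the unconditional cases --- a single prime $p < \ell/(4g)$ which is an $m_\Q$-th power residue modulo $\ell$ and has $f_{\mathfrak p/p}$ odd; the central proposition of \S4.3 shows no such prime can exist. In particular, GRH is not used to fix an auxiliary set $S$ independent of $\ell$ as you suggest, but rather to locate, for each large $\ell$, one prime of norm $O((\log\ell)^2)$ in a prescribed residue class modulo $\ell$.
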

\begin{remark}
  If $A$ has everywhere good reduction, it is at least possible that
  $[A] \in \mathscr{A}(K,g,\ell)$ for more than one $\ell$. Hence, the
  reader should not assume that the natural surjection $\mathscr{A}(K,g)
  \to \bigcup_\ell \mathscr{A}(K,g,\ell)$, $([A], \ell) \mapsto [A]$,
  is a bijection.
\end{remark}
In \cite{Rasmussen-Tamagawa:2008}, the authors prove this conjecture
in the case $(K,1)$ for $K = \Q$ and for $K$ a quadratic extension of
$\Q$ other than the nine imaginary quadratic extensions of class
number one. Moreover, the set $\mathscr{A}(\Q,1)$ is determined
explicitly. It contains $50$ $\Q$-isomorphism classes, spanning $21$
$\Q$-isogeny classes. The containment related to Ihara's question,
$\Q(E[\ell^\infty]) \subseteq \san(\Q,\ell)$ is demonstrated for
almost all classes $([E], \ell) \in \mathscr{A}(\Q,1)$. There are $4$
isomorphism classes, spanning $2$ isogeny classes, which remain
open. In each of these classes, $\ell = 11$ and the representative
curve $E$ does \emph{not} have complex multiplication.

In the present article, we prove the finiteness of $\mathscr{A}(K,g)$,
for arbitrary $K$ and $g$, under the assumption of the Generalized
Riemann Hypothesis. In addition, several new cases of the conjecture
are proven unconditionally. When possible, we give proofs for uniform
versions of the conjecture, meaning we demonstrate the existence of a
constant $C$, possibly dependent on $g$ and $[K:\Q]$, but not $K$ itself,
so that $\ell > C$ implies $\mathscr{A}(K,g,\ell) = \varnothing$.

The organization of the paper is as follows. In \S\ref{sec:ANT},
several well-known results from analytic number theory are
collected. In \S3, the behavior of the Galois representation $\rho$ on
$A[\ell]$ is studied for any $[A] \in \mathscr{A}(K,g,\ell)$, leading
to constraints on the indices of semistable reduction. This yields a
proof of the conjecture when we restrict to abelian varieties with
semistable reduction. In \S4, we construct a character $\chi(m_\Q)$
from $\rho$, and demonstrate the remarkable property that $\chi(m_\Q)$
never vanishes on the Frobenius elements of small primes. This will
play a key role in the proofs of both the conditional and
unconditional finiteness results.

In \S5, we prove the conjecture under the assumption of the
Generalized Riemann Hypothesis in various forms. Actually, two proofs
are given. The first proves the finiteness of $\mathscr{A}(K,g)$ for
any choice of $(K,g)$. The second proves a version of the conjecture
which is uniform in the degree of $K/\Q$. Unfortunately, the second
proof requires the assumption that $[K:\Q]$ is odd. Finally, this
uniform result is generalized to the case of extensions of odd,
bounded degree of a fixed but arbitrary number field $F$.

The remainder of the paper is dedicated to unconditional proofs of the
conjecture for certain choices of $K$ and $g$. In \S6, the behavior of
the special fiber is used to further constrain the numerical
invariants introduced in \S3, \S4. These results are then used in \S7 to
prove the conjecture unconditionally in several new cases:
\begin{itemize}
\item $K = \Q$ and $g = 2$, $3$,
\item $[K:\Q] = 2$ and $g = 1$,
\item $[K:\Q] = 3$ and $g = 1$,
\item $K/\Q$ is a Galois extension of exponent $3$ and $g = 1$.
\end{itemize}
Moreover, in the case of cubic extensions and $g=1$, we are able to
give a uniform version of the result.

\subsection{Notations}
For any number field $F$, we let $\Delta_F$ denote the absolute
discriminant of $F/\Q$, and let $n_F = [F:\Q]$. For any extension of
number fields $E/F$, if $\frak{P}$ is a prime of $E$ above a prime
$\frak{p}$ of $F$, we let $e_{\frak{P}/\frak{p}}$ and
$f_{\frak{P}/\frak{p}}$ denote, respectively, the ramification index
and the degree of the residue field extension. We let
$\kappa(\frak{p})$ denote the residue field of $\frak{p}$.

Throughout, the notation $C_j = C_j(x,y,\dots,z)$ indicates a constant
$C_j$ which is dependent on $x, y, \dots, z$ and no other quantities.

\section{Ingredients from Analytic Number Theory}\label{sec:ANT}

In this section, we accumulate a few results from analytic
number theory that will be needed in the sequel. 

\subsection{Prime $m$-th power residues.}

Let $\ell$ be a prime number. Whenever $m \geq 1$ is a divisor of
$\ell - 1$, it will be useful to find a small rational prime $p$ which
is an $m$-th power residue modulo $\ell$; that is, for which $p\pmod{\ell}
\in \F_\ell^{\times m}$. Without further restriction on $m$, the best
known bound for $p$ is $p = O(\ell^{5.5})$, given by Heath-Brown in 
\cite{Heath-Brown:1992}. However, for $m < 23$, the following result
of Elliott gives a stronger bound
\cite{Elliott:1971}: 
\begin{proposition}\label{prop:Elliott}
  Let $m$ be a positive integer and $\varepsilon > 0$. There exists a
  constant $C_1' = C_1'(m, \varepsilon)$ such that for any prime
  $\ell$, there exists a prime $p < C_1' \cdot \ell^{\frac{m-1}{4} +
    \varepsilon}$ which is an $m$-th power residue modulo $\ell$. 
\end{proposition}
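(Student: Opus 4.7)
The plan is to detect the property of being an $m$-th power residue modulo $\ell$ via multiplicative characters on $\F_\ell^\times$, and then apply nontrivial estimates on character sums over primes. By orthogonality on the cyclic group $\F_\ell^\times$,
\begin{equation*}
  \mathbbm{1}_{\F_\ell^{\times m}}(x) = \frac{1}{m} \sum_{\chi^m = 1} \chi(x), \qquad x \in \F_\ell^\times,
\end{equation*}
so the number of primes $p \leq X$ distinct from $\ell$ that lie in $\F_\ell^{\times m} \pmod{\ell}$ satisfies
\begin{equation*}
  \pi^{(m)}(X;\ell) = \frac{\pi(X)}{m} + \frac{1}{m}\sum_{\substack{\chi^m = 1 \\ \chi \neq 1}} \sum_{p \leq X} \chi(p) + O(1).
\end{equation*}
It thus suffices to show that for $X$ of the shape $C_1' \cdot \ell^{(m-1)/4 + \varepsilon}$, the aggregate of the $m-1$ nontrivial character sums is strictly smaller than the main term $\pi(X)/m \sim X/(m \log X)$, whereupon $\pi^{(m)}(X;\ell) \geq 1$ and a suitable prime is produced.

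To bound each sum $\sum_{p \leq X} \chi(p)$ for a nontrivial character $\chi$ of order dividing $m$, I would first pass to $\sum_{n \leq X} \Lambda(n)\chi(n)$ (the prime-power remainder contributing only $O(\sqrt X)$), and then apply Vaughan's identity to decompose this into type~I and type~II bilinear sums. The inner sums can be controlled using the Burgess bound for a character of prime modulus,
\begin{equation*}
  \Big|\sum_{N < n \leq N+Y} \chi(n)\Big| \ll_{r,\varepsilon} Y^{1-1/r}\, \ell^{(r+1)/(4r^2) + \varepsilon}, \qquad r \geq 1,
\end{equation*}
which is power-saving once $Y \gtrsim \ell^{1/4+\varepsilon}$. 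Optimising $r$ against the sizes of the resulting type~I and type~II pieces delivers an estimate of the form $\sum_{p \leq X} \chi(p) \ll X\, \ell^{-\delta}$ with $\delta = \delta(m,\varepsilon) > 0$, provided $X$ exceeds the stated threshold.

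The hard part is calibrating these ingredients so as to extract exactly the exponent $(m-1)/4$. Applying the Burgess estimate character-by-character and then invoking the triangle inequality across the $m-1$ nontrivial characters yields only a uniform (in $m$) exponent of roughly $3/8$. The improvement to a linear dependence on $m-1$ must come from treating the characters of order dividing $m$ collectively: Hölder's inequality on a $2k$-th moment $\sum_\chi \bigl|\sum_{n\leq Y}\chi(n)\bigr|^{2k}$, combined with Weil-type bounds on multiplicative character correlations, extracts cancellation beyond what is available character by character. Verifying that this combined estimate collapses to $C_1'\cdot\ell^{(m-1)/4+\varepsilon}$ uniformly in $\ell$, and that the constant genuinely depends only on $(m,\varepsilon)$, is the delicate heart of Elliott's argument.
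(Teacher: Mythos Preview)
The paper does not attempt to prove this proposition; it is recorded as a theorem of Elliott with a citation to \cite{Elliott:1971}, together with the short remark that Elliott's hypothesis $m\mid\ell-1$ may be dropped because $\F_\ell^{\times m}=\F_\ell^{\times m'}$ for $m'=\gcd(m,\ell-1)$. So there is no argument in the paper against which to compare your sketch.

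That said, your proposed route has a genuine obstruction, already visible at $m=2$. There the target is $X\asymp\ell^{1/4+\varepsilon}$, and you would need $\sum_{p\le X}\chi(p)=o(\pi(X))$ for a nonprincipal $\chi$ of conductor $\ell$. Any combinatorial decomposition of $\Lambda$ (Vinogradov's, or Vaughan's --- the latter dating from 1977, six years after Elliott) produces Type~II bilinear pieces whose inner variable ranges over an interval of length at most $X$, and typically at most $X^{1/2}\ll\ell^{1/8}$; this lies well below the Burgess threshold $\ell^{1/4}$, so Burgess yields nothing on those pieces and no saving is obtained. Unconditionally, no nontrivial estimate for $\sum_{p\le X}\chi(p)$ is known when $X$ is as small as $\ell^{1/4+\varepsilon}$. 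The repair you propose in the last paragraph --- a $2k$-th moment over the characters of order dividing $m$ --- cannot close this gap: there are only $m-1$ nontrivial such characters, far too few for any large-sieve-type averaging to lower the threshold in $X$. Elliott's actual argument avoids character sums over primes altogether; Burgess is invoked only for sums $\sum_{n\le N}\chi(n)$ over \emph{integers}, where the $\ell^{1/4+\varepsilon}$ range is genuinely available, and the exponent $(m-1)/4$ then arises from a short multiplicative argument on the coset group $\F_\ell^\times/\F_\ell^{\times m}$ that combines $m-1$ instances of the integer bound.
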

\begin{remark}
In fact, Elliott assumes $m \mid \ell - 1$. But note that in case $m
\nmid (\ell - 1)$, we have $\F_\ell^{\times m} = \F_\ell^{\times m'}$,
where $m' := \gcd(m, \ell - 1) < m$. So in fact the result holds as
stated. 
\end{remark}
We re-interpret Elliott's result as follows. For any integer $g > 0$
and any positive $\varepsilon < \frac{1}{4}$, set
\begin{equation*}
  C_1 = C_1(m, g, \varepsilon) := (4gC_1')^\frac{4}{(5-m)-4\varepsilon}. 
\end{equation*}
\begin{corollary}\label{cor:Elliott-corollary}
  Suppose $1 \leq m \leq 4$ and $0 < \varepsilon < \frac{1}{4}$. For any
  prime $\ell > C_1$, there exists a prime number $p <
  \frac{\ell}{4g}$ which is an $m$-th power residue modulo $\ell$.
\end{corollary}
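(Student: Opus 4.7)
The plan is to apply Proposition \ref{prop:Elliott} directly and then check, via the definition of $C_1$, that the resulting bound is sharper than $\ell/(4g)$. So I would first invoke Elliott's result for the given $m$ and $\varepsilon$ to produce a prime $p < C_1' \cdot \ell^{(m-1)/4 + \varepsilon}$ which is an $m$-th power residue modulo $\ell$. It then suffices to verify the inequality
\[
C_1' \cdot \ell^{\frac{m-1}{4} + \varepsilon} < \frac{\ell}{4g}.
\]

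Next I would rearrange this as $4gC_1' < \ell^{\frac{(5-m) - 4\varepsilon}{4}}$. The hypotheses $1 \leq m \leq 4$ and $0 < \varepsilon < \tfrac{1}{4}$ ensure that $(5-m) - 4\varepsilon > 0$, so raising both sides to the positive power $\frac{4}{(5-m)-4\varepsilon}$ preserves the inequality and yields $(4gC_1')^{\frac{4}{(5-m)-4\varepsilon}} < \ell$, which is precisely the hypothesis $\ell > C_1$. Reading the chain of equivalences backward delivers the desired bound $p < \ell/(4g)$.

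There is no real obstacle here; the entire corollary is a packaging of Elliott's bound, and the only content is the algebraic manipulation that shows the definition of $C_1$ is exactly what is needed to absorb both the constant $C_1'$ and the factor $4g$ while leaving a positive power of $\ell$ on the right-hand side. The only point requiring a moment of care is checking that the exponent $(5-m) - 4\varepsilon$ is strictly positive for the stated range of $m$ and $\varepsilon$, so that the inversion step is legitimate.
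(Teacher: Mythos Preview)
Your proposal is correct and follows essentially the same approach as the paper: apply Elliott's bound and then verify that $\ell > C_1$ is equivalent to $C_1'\ell^{\frac{m-1}{4}+\varepsilon} < \frac{\ell}{4g}$. The paper's own proof is in fact terser than yours, merely noting that $\ell^{\frac{m-1}{4}+\varepsilon}$ is sub-linear in $\ell$ and that one may check directly that $\ell > C_1$ yields the desired inequality.
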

\begin{proof}
  The quantity $\ell^{\frac{m-1}{4}+\varepsilon}$ is sub-linear in
  $\ell$; hence, there must be some lower bound for which such a $p$
  is guaranteed to exist. More precisely, one may check directly that $\ell > C_1$
  implies $C'_1 \ell^{\frac{m-1}{4} + \varepsilon} < \frac{\ell}{4g}$. 
\end{proof}

\subsection{Goldfeld's Theorem}

We recall a result of Goldfeld which will be used in the proof of
finiteness over quadratic fields when $g=1$. Let $K$ be a number
field, and let $S$ be a finite set of rational primes. Consider the
following two properties possibly satisfied by an integer $N$:
\begin{quote}
\begin{enumerate}
\item[({\bf Go 1})] There is a quadratic extension $L/\Q$ such that
  $-N$ is the discriminant of $L/\Q$. (Automatically, $L = \Q(\sqrt{-N})$.)
\item[({\bf Go 2})] If $p < \frac{|N|}{4}$ is a rational prime, $p
  \not\in S$, and $p$ splits completely in $K$, then $p$ does
  not split in $\Q(\sqrt{-N})$.
\end{enumerate}
\end{quote}
The following result of Goldfeld is proved in the 
Appendix of \cite{Mazur:1978}. (It is unfortunately not effective.)
\begin{theorem}[Theorem A, \cite{Mazur:1978}]\label{thm:Goldfeld}
Consider the set\begin{equation*}
\mathscr{N}(K,S) := \{N \in \Z : \mbox{\emph{$N$ satisfies both ({\bf
      Go 1}) and ({\bf Go 2})}}\,\}. 
\end{equation*}
If $n_K \leq 2$, then $\mathscr{N}(K,S)$ is finite.
\end{theorem}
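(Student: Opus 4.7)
The plan is to argue by contradiction. Suppose $\mathscr{N}(K,S)$ is infinite; we extract enough analytic content from (\textbf{Go 1}) and (\textbf{Go 2}) to violate a known result. Fix an infinite sequence $N_1 < N_2 < \cdots$ in $\mathscr{N}(K,S)$. By (\textbf{Go 1}), each $N_i$ determines a primitive real Dirichlet character $\chi_i$ corresponding to the imaginary quadratic field $F_i := \Q(\sqrt{-N_i})$ of discriminant $-N_i$.

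The first substantive step is to translate (\textbf{Go 2}) into L-function data. For each rational prime $p$ with $p < N_i/4$, $p \notin S$, $p \nmid N_i$, and $p$ completely split in $K$, condition (\textbf{Go 2}) forces $\chi_i(p) = -1$. Since the density of primes splitting completely in $K$ is $1/n_K \geq 1/2$, substituting this information into the logarithm of the Euler product $\log L(s,\chi_i) = \sum_{p,k} \chi_i(p)^k/(k p^{ks})$ at $s=1$ yields a one-sided bound of the shape
\begin{equation*}
L(1,\chi_i) \ll_{K,S} (\log N_i)^{-1/n_K + o(1)}.
\end{equation*}
Via the Dirichlet class number formula $h(F_i) = w_i \sqrt{N_i}\, L(1,\chi_i)/(2\pi)$, this forces the class number $h(F_i)$ to be abnormally small relative to $\sqrt{N_i}$ and indicates the onset of Siegel-zero behavior for the characters $\chi_i$.

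To convert this smallness into a contradiction, I would invoke the analytic machinery behind Goldfeld's theorem: Siegel's ineffective lower bound $L(1,\chi) \gg_\varepsilon \mathrm{cond}(\chi)^{-\varepsilon}$, the Deuring--Heilbronn phenomenon governing exceptional real zeros of Dirichlet L-functions, and Landau's theorem that two distinct real characters of comparable conductor cannot both possess Siegel zeros arbitrarily close to $s=1$. Together these imply that the smallness forced by (\textbf{Go 2}) can be realized by only finitely many $\chi_i$, contradicting infinitude. The main obstacle is that the naive Euler-product bound is only $O((\log N_i)^{-1/n_K})$, much weaker than the negative power of $N_i$ needed to contradict Siegel directly. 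Bridging this gap is the delicate heart of the argument; it typically involves considering product characters $\chi_i\chi_j$ across pairs from the infinite family and exploiting orthogonality so that the accumulated constraint is incompatible with the Landau repulsion principle. It is precisely this indirect reduction, using Siegel's theorem, that renders the result unavoidably ineffective, as the statement already warns.
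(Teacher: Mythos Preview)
The paper does not supply its own proof of this statement: Theorem~\ref{thm:Goldfeld} is simply quoted as Theorem~A from the Appendix of \cite{Mazur:1978}, due to Goldfeld, and the paper immediately passes to its corollary. There is therefore nothing in the present paper to compare your argument against.

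As for the proposal itself, you have correctly identified the overall shape of Goldfeld's argument---translate (\textbf{Go 2}) into a constraint on $\chi_i(p)$ for small split primes, deduce smallness of $L(1,\chi_i)$, and then pit this against zero-repulsion phenomena (Deuring--Heilbronn, Landau, Siegel). However, the proposal is not a proof but a strategic outline with an explicitly acknowledged gap: you note that the naive Euler-product estimate $L(1,\chi_i) \ll (\log N_i)^{-1/n_K}$ is far too weak to contradict Siegel's bound $L(1,\chi) \gg_\varepsilon N^{-\varepsilon}$, and you gesture toward ``considering product characters $\chi_i\chi_j$'' and the ``Landau repulsion principle'' without actually carrying out any such reduction. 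That step is the entire content of the theorem; everything before it is bookkeeping. So while the direction is right, the proposal as written does not close the argument, and the phrase ``I would invoke'' in the final paragraph is doing all of the work. If you intend to reconstruct Goldfeld's proof, you will need to make precise how the existence of \emph{two} discriminants $N_i, N_j$ in $\mathscr{N}(K,S)$ forces incompatible exceptional zeros, or else follow Goldfeld's original route via an explicit formula and the Deuring--Heilbronn effect.
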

We will rely on the following corollary to demonstrate
finiteness of $\mathscr{A}(K,1)$ for quadratic fields $K$.
\begin{corollary}\label{cor:Goldfeld}
  Suppose $n_K = 2$. There exists an ineffective constant $C_2 =
  C_2(K)$ such that, for any prime $\ell > C_2$, there exists a prime
  number $p < \frac{\ell}{4g}$ which is a square residue modulo $\ell$
  and which satisfies $f_{\frak{p}/p} = 1$ for any prime $\frak{p}$ of
  $K$ above $p$.
\end{corollary}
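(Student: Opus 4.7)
The plan is to apply Goldfeld's theorem (Theorem~\ref{thm:Goldfeld}) to the given quadratic field $K$, taking $S$ to be the finite set of rational primes ramifying in $K/\Q$ (enlarged by $\{2\}$ if convenient). Since $n_K = 2$, the set $\mathscr{N}(K,S)$ is finite; let $M_0$ denote its maximum. For each sufficiently large prime $\ell$, my goal is to choose an integer $N = N(\ell)$ satisfying (\textbf{Go 1}) whose associated imaginary quadratic field $\Q(\sqrt{-N})$ encodes, via quadratic reciprocity, the condition that $p$ be a square residue modulo $\ell$. Once $N > M_0$, (\textbf{Go 2}) must fail, producing a rational prime $p < |N|/4$ with $p \notin S$ that splits completely in $K$ (so $f_{\frak{p}/p} = 1$ for every $\frak{p}$ of $K$ above $p$) and splits in $\Q(\sqrt{-N})$.

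The key computation is the choice of $N(\ell)$. When $\ell \equiv 3 \pmod{4}$, I would take $N = \ell$: here $-\ell \equiv 1 \pmod{4}$ is the fundamental discriminant of $\Q(\sqrt{-\ell})$, so (\textbf{Go 1}) is satisfied, and a direct application of quadratic reciprocity gives $(p/\ell) = (-\ell/p)$, so $p$ is a square modulo $\ell$ precisely when $p$ splits in $\Q(\sqrt{-\ell})$. When $\ell \equiv 1 \pmod{4}$, I would take $N = 4\ell$, so that $-4\ell$ is the discriminant of $\Q(\sqrt{-\ell})$; here splitting in $\Q(\sqrt{-\ell})$ yields only $(-1/p)(\ell/p) = 1$, and an auxiliary step is needed to extract the unambiguous condition $(p/\ell) = 1$. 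I would handle this either by pairing the argument with a second application of Goldfeld using $N' = 8\ell$ (whose attached field $\Q(\sqrt{-2\ell})$ encodes $(-2/p)(\ell/p)=1$) to eliminate the sign ambiguity, or by restricting attention to $p$ in a fixed residue class modulo a small auxiliary modulus (absorbing the excluded classes into an enlarged $S$).

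The main obstacle I anticipate is sharpening the conclusion $p < |N|/4$, which is at worst $p < \ell$, to the stronger bound $p < \ell/(4g)$ claimed in the statement. I would address this by appealing to the quantitative content of Goldfeld's argument: its proof, which exploits the Deuring--Heilbronn phenomenon applied to class-number lower bounds for $\Q(\sqrt{-N})$, in fact exhibits many rational primes $p$ satisfying the splitting conditions below $|N|/4$, rather than a single one. Absorbing the factor of $g$ into the constant $C_2(K)$ — and taking $\ell$ correspondingly large — then forces at least one such $p$ to lie below $\ell/(4g)$. The ineffectiveness of $C_2(K)$ is inherited from that of Theorem~\ref{thm:Goldfeld}, as desired.
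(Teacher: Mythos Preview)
Your overall strategy is right, but there is a genuine gap in the case $\ell \equiv 1 \pmod 4$. Neither of your proposed fixes works. Pairing $N = 4\ell$ with $N' = 8\ell$ fails because the failure of (\textbf{Go 2}) for $N$ and for $N'$ may well produce two \emph{different} primes $p_1$, $p_2$; you cannot combine the relation $(-1/p_1)(p_1/\ell) = 1$ with $(-2/p_2)(p_2/\ell) = 1$ to force $(p/\ell) = 1$ for any single $p$. Your alternative, ``absorbing the excluded residue classes into an enlarged $S$,'' is not available either: in Theorem~\ref{thm:Goldfeld} the set $S$ is a finite set of \emph{primes}, not of residue classes, so it cannot be used to impose a congruence condition such as $p \equiv 1 \pmod 4$.

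The paper sidesteps this entirely with a one-line observation you missed: set $\ell^* := (-1)^{(\ell-1)/2}\ell$ and take $N = -\ell^*$ uniformly. Then $-N = \ell^*$ is the discriminant of $\Q(\sqrt{\ell^*})$, the unique quadratic subfield of $\Q(\bmu_\ell)$, and a prime $p \neq \ell$ splits there precisely when $\bigl(\tfrac{p}{\ell}\bigr) = 1$ --- with no sign ambiguity and no case split. (Note that Theorem~\ref{thm:Goldfeld} is stated for $N \in \Z$, so the field $\Q(\sqrt{-N})$ is permitted to be real quadratic.) With this choice one has $|N| = \ell$, and the containment $\mathscr{N}'(K) \subseteq \mathscr{N}(K,\varnothing)$ finishes the argument immediately; even $S = \varnothing$ suffices. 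Finally, your concern about sharpening $p < |N|/4 = \ell/4$ to $p < \ell/(4g)$ is unnecessary here: the corollary is only ever invoked with $g = 1$ (Proposition~\ref{prop:quad-case}), so the two bounds coincide and there is no need to revisit the internals of Goldfeld's argument.
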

\begin{proof}
  For any odd prime $\ell$, let $\ell^* = (-1)^{\frac{\ell-1}{2}} \ell$,
  and notice that $\ell^*$ is the discriminant of
  $\Q(\sqrt{\ell^*})/\Q$. Further, $p$ splits in $\Q(\sqrt{\ell^*})$ if
  and only if $(\frac{p}{\ell}) = 1$. Let us set
\begin{equation*}
\mathscr{N}'(K) := \left\{ -\ell^* : \begin{tabular}{l} 
\mbox{$\ell$ odd prime such that for every $p < \frac{\ell}{4}$,} \\ 
\mbox{if $p$ splits in $K$ then $(\frac{p}{\ell})=-1$} 
\end{tabular} \right\}.
\end{equation*}
As $\mathscr{N}'(K) \subseteq \mathscr{N}(K,\varnothing)$, the result
follows immediately.
\end{proof}

\subsection{Chebotarev Density Theorem}

Let $E/F$ be a Galois extension of number fields, and let $\frak{p}$
be a prime of $F$, unramified in $E/F$. The Frobenius elements
of primes $\frak{P}$ of $E$ above $\frak{p}$ form a conjugacy class 
\begin{equation*}
\left[ \frac{E/F}{\frak{p}} \right] : = \{\Frob_\frak{P} : \frak{P}
\subseteq \mathscr{O}_E, \frak{P} \mid \frak{p} \}
\end{equation*}
inside $\Gal(E/F)$. 
If the particular choice of $\frak{P}$ is irrelevant, and no confusion
arises, we will write $\Frob_\frak{p}$ to denote any one element from
this class.

Let $\sigma \in \Gal(E,F)$. The Chebotarev
Density Theorem states that there are infinitely many primes
$\frak{p}$ of $F$, unramified in $E/F$, for which $\sigma \in \left[
  \frac{E/F}{\frak{p}} \right]$. We now recall an effective version of
this result, due to Lagarias and Odlyzko \cite{Lagarias-Odlyzko:1977},
conditional on the Generalized Riemann Hypothesis (GRH).

\begin{theorem}\label{thm:eff-Cheb}
  There exists an absolute constant $C_3 > 0$ with the following
  property. Let $E/F$ be a Galois extension of number fields, and
  suppose the Generalized Riemann Hypothesis holds for the Dedekind
  zeta function of $E$. For any $\sigma \in \Gal(E/F)$, there exists a
  prime $\frak{p}$ of $F$, unramified in $E/F$, with the following
  properties:
\begin{itemize}
\item $\sigma \in \left[ \frac{E/F}{\frak{p}} \right]$,
\item $N_{F/\Q}\frak{p} \leq C_3 (\log \Delta_E)^2$ (provided $E \neq \Q$).
\end{itemize}
\end{theorem}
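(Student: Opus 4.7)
The plan is the classical analytic approach of Lagarias--Odlyzko: bound the weighted Chebotarev counting function via the explicit formula for Artin $L$-functions under GRH, then choose $x$ small enough that the main term still dominates the error. Let $G := \Gal(E/F)$, fix $\sigma \in G$ with conjugacy class $C$, and introduce
\begin{equation*}
  \psi_C(x) \;:=\; \sum_{\substack{\frak{p},\, k \geq 1 \\ N\frak{p}^k \leq x \\ \Frob_{\frak{p}}^k \in C}} \log N\frak{p}.
\end{equation*}
Orthogonality of the irreducible characters of $G$ gives the decomposition $1_C = \frac{|C|}{|G|}\sum_\chi \overline{\chi(\sigma)}\,\chi$, so that $\psi_C(x)$ is a linear combination of the functions $\psi(x,\chi)$ attached to the Artin $L$-functions $L(s,\chi,E/F)$, weighted by the numbers $\overline{\chi(\sigma)}$.

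Next I would invoke the explicit formula. Since $\zeta_E(s) = \prod_\chi L(s,\chi,E/F)^{\chi(1)}$, every zero of any $L(s,\chi)$ is a zero of $\zeta_E$, so GRH for $\zeta_E$ places every relevant zero on the critical line. Brauer induction represents each $\chi$ as a $\Z$-combination of characters induced from one-dimensional characters of subgroups, realizing each $L(s,\chi)$ as a quotient of Hecke $L$-functions for which holomorphy on $\Re(s)>0$ and the standard conductor-discriminant bound apply. Paired with GRH and an appropriate smooth test function, the explicit formula then yields, uniformly in $\chi$, an estimate of the shape
\begin{equation*}
  \psi(x,\chi) \;=\; \delta_\chi\, x + O\bigl(x^{1/2}(\log \Delta_E + n_E \log x)\bigr),
\end{equation*}
with $\delta_\chi = 1$ for the trivial character and $0$ otherwise. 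Summing against $\overline{\chi(\sigma)}$ produces the analogous estimate for $\psi_C(x)$.

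To obtain an actual unramified prime, I pass from $\psi_C(x)$ to the counting function $\pi_C(x)$ by absorbing higher prime powers and ramified primes into an error of the same order, and then observe that for $x$ of size an absolute constant times $(\log \Delta_E)^2$ the main term strictly exceeds the error; Minkowski's bound $n_E \ll \log \Delta_E$ (valid for $E \neq \Q$) is exactly what allows the $n_E \log x$ contribution to be subsumed. Positivity of $\pi_C(x)$ then produces a prime $\frak{p}$ with $\sigma \in \bigl[\frac{E/F}{\frak{p}}\bigr]$ and $N_{F/\Q}\frak{p} \leq C_3(\log \Delta_E)^2$, as required. The main obstacle is the transfer of GRH to individual Artin $L$-functions attached to non-abelian characters, which are not known to be entire in general; the remedy, as in Lagarias--Odlyzko, is to work throughout with $\log\zeta_E$ itself and invoke Brauer's induction theorem to rewrite everything in terms of genuine Hecke $L$-functions, so that the explicit formula and the zero-density estimate used above are unconditional once GRH for $\zeta_E$ is assumed.
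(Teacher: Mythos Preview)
The paper does not give its own proof of this statement: it is quoted as a result of Lagarias--Odlyzko, with the Remark immediately following the theorem pointing to Corollary~1.2 and the discussion on pages 461--462 of \cite{Lagarias-Odlyzko:1977}. Your sketch is a faithful outline of their argument and thus goes well beyond what the paper itself supplies.

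One minor point worth flagging: the assertion that every zero of each $L(s,\chi)$ is automatically a zero of $\zeta_E$ is not quite immediate without the Artin conjecture, since hypothetical poles of one factor could in principle cancel zeros of another in the product $\zeta_E = \prod_\chi L(s,\chi)^{\chi(1)}$. You correctly identify exactly this obstruction in your final paragraph and indicate the Brauer-induction remedy (rewriting everything in terms of Hecke $L$-functions), which is indeed how Lagarias--Odlyzko proceed; so the sketch is internally consistent, but the earlier sentence should be read as heuristic motivation rather than a rigorous step.
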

\begin{remark}
This statement combines both Corollary 1.2 and the discussion on pages
461--462 of \cite{Lagarias-Odlyzko:1977}. 
\end{remark}
In exchange for a weakening of the bound on the norm, we may place an
additional constraint on $\frak{p}$.

\begin{corollary}\label{cor:eff-Cheb-f1}
Let $E/F$ be a Galois extension of number fields, and let $\tilde{E}$
denote the Galois closure of $E$ over $\Q$. Let $\sigma$ be a fixed
element of $\Gal(E/F)$. Assume GRH holds for the Dedekind zeta
function of $\tilde{E}$. Then there exists a prime $\frak{p}$ of $F$,
unramified in $E/F$, such that  
\begin{itemize}
\item $\sigma \in \left[ \frac{E/F}{\frak{p}} \right]$,
\item $N_{F/\Q} \frak{p} \leq C_3 (\log \Delta_{\tilde{E}})^2$
  (provided $E \neq \Q$),
\item $f_{\frak{p}/p} = 1$, where $p$ is the rational prime below
  $\frak{p}$.
\end{itemize}
\end{corollary}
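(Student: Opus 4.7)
The plan is to reduce to the unrestricted effective Chebotarev (Theorem~\ref{thm:eff-Cheb}) applied to the Galois extension $\tilde{E}/\Q$. The key observation is that the condition $f_{\frak{p}/p}=1$ has a clean group-theoretic reformulation: if $\frak{P}$ is a prime of $\tilde{E}$ above $p$ and $\frak{p} = \frak{P}\cap F$, then (in the unramified case) $f_{\frak{p}/p} = [D_\frak{P} : D_\frak{P}\cap H]$, where $H := \Gal(\tilde{E}/F)$ and $D_\frak{P}$ is the decomposition group. Hence $f_{\frak{p}/p}=1$ precisely when $\Frob_\frak{P} \in H$.

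Concretely, I would proceed as follows. First, since $E/F$ is Galois, $N := \Gal(\tilde{E}/E)$ is normal in $H$, and $\Gal(E/F) = H/N$; choose any lift $\tilde{\sigma} \in H$ of $\sigma$. Next, apply Theorem~\ref{thm:eff-Cheb} to the Galois extension $\tilde{E}/\Q$, using $\tilde{\sigma}$ as the target element: since GRH holds for the Dedekind zeta function of $\tilde{E}$, we obtain an unramified rational prime $p$ with $p \leq C_3(\log\Delta_{\tilde{E}})^2$ whose Frobenius conjugacy class in $\Gal(\tilde{E}/\Q)$ contains $\tilde{\sigma}$. After replacing $\frak{P}$ by a suitable $\Gal(\tilde{E}/\Q)$-conjugate above $p$, we may take $\Frob_\frak{P} = \tilde{\sigma}$ exactly. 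Set $\frak{p} := \frak{P}\cap F$.

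Now verify the three bullets. Since $\Frob_\frak{P} = \tilde{\sigma} \in H$, the computation above gives $f_{\frak{p}/p}=1$, and hence $N_{F/\Q}\frak{p} = p \leq C_3(\log\Delta_{\tilde{E}})^2$. The prime $\frak{p}$ is automatically unramified in $E/F$ because $\frak{P}$ is unramified over $\Q$. Finally, letting $\frak{Q} := \frak{P}\cap E$, the Frobenius $\Frob_\frak{Q} \in \Gal(E/F)$ is obtained from $\Frob_\frak{P} \in H$ via the quotient map $H \to H/N = \Gal(E/F)$, which sends $\tilde{\sigma} \mapsto \sigma$; thus $\sigma \in [E/F / \frak{p}]$.

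There is no real obstacle beyond setting up the reduction correctly: the subtle point is that $E$ and $F$ need not be Galois over $\Q$, so one must pass to $\tilde{E}$ and take care that the lift $\tilde{\sigma}$ actually lies in the subgroup $H$ corresponding to $F$ (which is possible because $\sigma \in \Gal(E/F) = H/N$, not just in $\Gal(E/\Q)$). Once that is noted, the cost for imposing $f_{\frak{p}/p}=1$ is exactly the replacement of $\Delta_E$ by $\Delta_{\tilde{E}}$ in the bound, as claimed.
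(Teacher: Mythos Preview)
Your proof is correct and follows essentially the same approach as the paper: lift $\sigma$ to $\tilde{\sigma}\in\Gal(\tilde{E}/F)$, apply Theorem~\ref{thm:eff-Cheb} to $\tilde{E}/\Q$, and use that $\Frob_\frak{P}=\tilde{\sigma}$ lies in $\Gal(\tilde{E}/F)$ to force $f_{\frak{p}/p}=1$. The only cosmetic difference is that the paper phrases the last step via the decomposition field $F_1$ of $\frak{P}$ rather than the index formula $f_{\frak{p}/p}=[D_\frak{P}:D_\frak{P}\cap H]$, but these are equivalent.
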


\begin{proof}
  As $\sigma$ fixes $\Q$ and $\sigma(E) \subseteq \tilde{E}$, there
  exists $\tilde{\sigma} \in \Gal(\tilde{E}/\Q)$ such that
  $\tilde{\sigma} \bigr|_E = \sigma$.  Applying Theorem
  \ref{thm:eff-Cheb} to the extension $\tilde{E}/\Q$, we know there
  exists a rational prime $p < C_3 (\log \Delta_{\tilde{E}})^2$ such
  that $p$ is unramified in $\tilde{E}/\Q$ and $\tilde{\sigma} \in
  \left[ \frac{ \tilde{E}/\Q }{p} \right]$. Thus, there is a prime
  ideal $\frak{P} \mid p$ of $\tilde{E}$ such that $\tilde{\sigma} =
  \Frob_\frak{P}$. Necessarily, the decomposition group $D_\frak{P}
  \leq \Gal(\tilde{E}/\Q)$ is generated by $\tilde{\sigma}$. As
  $\tilde{\sigma}$ fixes $F$, we in fact have $D_\frak{P} \leq
  \Gal(\tilde{E}/F)$. Let $F_1$ denote the subextension of
  $\tilde{E}/F$ fixed by $D_\frak{P}$, and set $\frak{p}_1 := \frak{P}
  \cap \mathscr{O}_{F_1}$. Necessarily, the residue fields
  $\mathscr{O}_{F_1}/\frak{p}_1$ and $\Z/p\Z$ coincide, and so
  $f_{\frak{p}_1/p} = 1$. Setting $\frak{p} = \frak{p}_1 \cap
  \mathscr{O}_F$, we have $f_{\frak{p}/p} = 1$, also. Moreover,
  $\sigma \in \left[ \frac{E/F}{\frak{p}} \right]$; since
  $N_{F/\Q}\frak{p} = p$, the result is shown.
\end{proof}

Suppose $\ell$ is a rational prime and $m \mid (\ell
- 1)$. We let $\Q(\bmu_\ell)_m$ denote the unique subfield of
$\Q(\bmu_\ell)$ which is a degree $m$ extension of $\Q$. 

\begin{proposition}\label{prop:GRH-Cheb-bound}
Let $m \geq 1$, $g > 0$, and $n \geq 1$ be fixed integers. Let $K$
be a fixed number field, with Galois closure $\tilde{K}$ over
$\Q$. There exists a constant $C_6 = C_6(m, g, n, K)$ with the
following property. Suppose $\ell > C_6$ is a prime number, set $L_0 =
\Q(\bmu_\ell)_m$, and suppose the Generalized Riemann Hypothesis holds
for the Dedekind zeta function of $L_0 \tilde{K}$. Then for any
$\sigma \in \Gal(L_0 \tilde{K} / K)$, there exists a rational prime $p
< \left( \frac{\ell}{4g} \right)^{1/n}$ and a prime $\frak{p} \mid p$
of $K$, such that
\begin{itemize}
\item $\frak{p}$ is unramified in $L_0\tilde{K} / K$,
\item $f_{\frak{p}/p} = 1$,
\item $\sigma \in \left[ \frac{L_0 \tilde{K} / K}{\frak{p}} \right]$.
\end{itemize}
Consequently, for $\ell > C_6$, there exists a prime $p < \left(
  \frac{\ell}{4g} \right)^{1/n}$ which is an $m$-th power residue modulo $\ell$.
\end{proposition}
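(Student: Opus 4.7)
The plan is to apply Corollary~\ref{cor:eff-Cheb-f1} to the extension $L_0\tilde K/K$. Since both $L_0 \subseteq \Q(\bmu_\ell)$ and $\tilde K$ are Galois over $\Q$, so is their compositum $L_0\tilde K$, which therefore coincides with its own Galois closure over $\Q$. Invoking the corollary (legitimate by the GRH hypothesis on $\zeta_{L_0\tilde K}$) yields, for each prescribed $\sigma \in \Gal(L_0\tilde K/K)$, a prime $\frak p$ of $K$ unramified in $L_0\tilde K/K$, with $\sigma \in \bigl[\frac{L_0\tilde K/K}{\frak p}\bigr]$, $f_{\frak p/p} = 1$, and $N_{K/\Q}\frak p \leq C_3(\log \Delta_{L_0\tilde K})^2$. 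Since $f_{\frak p/p} = 1$ forces $N_{K/\Q}\frak p = p$, the main assertion will follow once we arrange $C_3(\log\Delta_{L_0\tilde K})^2 < (\ell/(4g))^{1/n}$ for every $\ell$ larger than some $C_6 = C_6(m,g,n,K)$.

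The heart of the argument is the discriminant estimate. Since $L_0$ is the unique cyclic degree-$m$ subfield of $\Q(\bmu_\ell)$, the conductor-discriminant formula gives $|\Delta_{L_0}| = \ell^{m-1}$ (each of its $m-1$ nontrivial characters has conductor $\ell$). Once $\ell$ exceeds every rational prime dividing $\Delta_{\tilde K}$, the fields $L_0$ and $\tilde K$ are linearly disjoint over $\Q$ (their ramified primes are disjoint), and the standard bound for compositum discriminants yields
\[
|\Delta_{L_0\tilde K}| \leq |\Delta_{L_0}|^{n_{\tilde K}} \cdot |\Delta_{\tilde K}|^{m}.
\]
Taking logarithms, $\log|\Delta_{L_0\tilde K}| = O(\log\ell)$ with implicit constants depending only on $m$ and $K$. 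Hence $C_3(\log\Delta_{L_0\tilde K})^2 = O((\log\ell)^2)$, and since $(\log\ell)^2 = o(\ell^{1/n})$ for every $n \geq 1$, a sufficiently large $C_6 = C_6(m,g,n,K)$ guarantees the required inequality whenever $\ell > C_6$.

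For the final consequence, recall that a rational prime $p$ is an $m$-th power residue modulo $\ell$ precisely when $p$ splits completely in $L_0$. Apply the main statement with $\sigma = \id \in \Gal(L_0\tilde K/K)$: the Frobenius (over $K$) of a prime $\frak P$ of $L_0\tilde K$ above $\frak p$ is trivial, and combined with $f_{\frak p/p} = 1$ this forces $\kappa(\frak P) = \F_p$, so that the prime $\frak P \cap L_0$ has residue degree $1$ over $p$; together with $p \neq \ell$ (whence unramification in $L_0/\Q$), this implies $p$ splits completely in $L_0$. The whole argument is essentially bookkeeping around effective Chebotarev; the one place that requires a small check is linear disjointness of $L_0$ and $\tilde K$, which is the reason we must take $\ell$ to exceed the primes in $\Delta_{\tilde K}$.
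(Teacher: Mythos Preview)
Your argument is correct and follows essentially the same route as the paper's: apply Corollary~\ref{cor:eff-Cheb-f1} to $L_0\tilde K/K$ (observing that $L_0\tilde K$ is already Galois over $\Q$, hence its own Galois closure), bound $\log|\Delta_{L_0\tilde K}|$ linearly in $\log\ell$ via linear disjointness and the compositum-discriminant formula, and use $(\log\ell)^2 = o(\ell^{1/n})$; the final consequence is handled, as in the paper, by choosing $\sigma$ trivial on $L_0$.

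There is one small technical omission. The norm bound in Corollary~\ref{cor:eff-Cheb-f1} carries the proviso ``provided $E \neq \Q$,'' so your invocation fails in the degenerate case $(m,K)=(1,\Q)$, where $L_0\tilde K=\Q$ and $\log\Delta_{L_0\tilde K}=0$. The paper disposes of this case separately by taking $p=2$ (and $C_6 > 4g\cdot 2^n$); you should add a sentence to the same effect.
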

\begin{proof}
  Since $C_6$ may depend on $K$, we may assume $\ell \nmid \Delta_K$
  without loss of generality. Let $L = L_0 K$, $\tilde{L} = L_0
  \tilde{K}$. Then $\tilde{L}$ is the Galois closure of $L$ over
  $\Q$. The fields $L_0$ and $\tilde{K}$ are linearly disjoint over
  $\Q$ and have no common factor in their discriminants. So
  (\cite[Prop.~17]{Lang:1994}) we have 
\begin{equation*}
  \Delta_{\tilde{L}} = \Delta_{L_0 \tilde{K}} =
  \Delta_{\tilde{K}}^{n_{L_0}} \cdot \Delta_{L_0}^{n_{\tilde{K}}} =
  \Delta_{\tilde{K}}^m \cdot (\ell^{m-1})^{n_{\tilde{K}}}. 
\end{equation*}
Consequently, we always have the bound
\begin{equation*}
\begin{split}
\log \Delta_{\tilde{L}} & = m \log \Delta_{\tilde{K}} +
(m-1)n_{\tilde{K}} \log \ell \\
& \leq m \log \Delta_{\tilde{K}} + (m-1)n_K! \log \ell.
\end{split}
\end{equation*}
If $L=\Q$ (i.e., if $m=1$ and $K=\Q$), the assertion clearly holds 
with any $C_6> 4g\cdot 2^n$, since we may then take $p=2$. So we may 
assume $L \neq \Q$. Combining with Corollary \ref{cor:eff-Cheb-f1}, we
see there exists a rational prime $p$ and a prime $\frak{p} \mid p$ of
$K$, such that $\frak{p}$ is unramified in $\tilde{L}$,
$f_{\frak{p}/p} = 1$, and $\sigma \in \left[
  \frac{\tilde{L}/K}{\frak{p}} \right]$. Moreover, $p$ may be chosen 
so that 
\begin{equation}
p \leq C_3 \cdot (C_4 + C_5 \log \ell)^2,
\end{equation}
where
\begin{equation*}
\begin{split}
C_4 = C_4(m, K) & := m \log \Delta_{\tilde{K}}, \\
C_5 = C_5(m, n_K) & := \max \{1, (m-1) n_K! \}.
\end{split}
\end{equation*}
As $C_3 \cdot (C_4 + C_5 \log \ell)^2 < \left( \frac{\ell}{4g} \right)^{1/n}$ 
for $\ell \gg 0$, this proves the first claim. For the second claim, choose $\sigma \in
\Gal(\tilde{L}/K)$ such that $\sigma|_{L_0} = \mathrm{id}$. Then the
prime $\frak{p}$ guaranteed by the first claim has an associated
Frobenius element which is trivial on $L_0$; this implies that $p$ is
an $m$-th power residue modulo $\ell$.
\end{proof}
\begin{remark}
Notice that this result generalizes (in fact, implies, under GRH), the
earlier results of the section which guarantee a small prime $m$-th
power residue.
\end{remark}
\begin{remark}
  Theorem \ref{thm:eff-Cheb} remains valid even if $C_3$ is replaced
  by a larger constant. So we may and do assume $C_3 \geq 1$. Let
  $\ell'$ denote the largest prime divisor of $\Delta_K$. For the
  constant $C_6$, we may take the value (provided $(m,K) \neq (1,\Q)$)
\begin{equation*}
  C_6(m,g,n,K) := \max \left\{\ell', 16g^2C_3^{2n}C_5^{4n}(2n)^{4n} \exp \left( \frac{C_4}{C_5} \right) \right\}.
\end{equation*}
This follows from a lengthy argument that when $\ell > C_6$, $\ell$
also satisfies the inequality
\begin{equation*}
\left( \frac{\ell}{4g} \right)^{1/n} > C_3 \cdot (C_4 + C_5 \log \ell)^2.
\end{equation*}
The details of the argument are given in the appendix.
\end{remark}

\section{Constraints on the indices of semistable reduction}

Let $K$ be a number field, and $A/K$ an abelian variety of dimension $g
> 0$. Let $\ell$ be a rational prime. For any prime $\lambda$ of $K$
above $\ell$, denote by $K_\lambda$ the $\lambda$-adic completion of
$K$. Let $A_{K_\lambda}$ denote the base change of $A$ over
$K_\lambda$, and let $e_{A_{K_\lambda}}$ be the minimal ramification
index at $\lambda$ for which semistable reduction for $A_{K_\lambda}$
is achieved. In this section, we record some constraints on
$e_{A_{K_\lambda}}$ in general, and also under the assumption that
$[A] \in \mathscr{A}(K,g,\ell)$.

\subsection{The index of semistable ramification.}

Let $K_\lambda^\mathrm{ur}$ denote the maximal unramified extension of
$K_\lambda$, and let $I_\lambda = G_{K_\lambda^\mathrm{ur}} \subset G_{K_\lambda}$
denote the inertia group at $\lambda$. The ramification index $e_{\lambda/\ell}$
divides $[K_\lambda : \Q_\ell] \leq n_K$. Moreover,
$e_{\lambda/\ell} > 1$ for some $\lambda \mid \ell$ if and only if
$\ell \mid \Delta_K$. It is known that, for any prime $\ell' \neq
\ell$, the kernel $J_\lambda$ of the natural representation
$\rho_{A,\ell'}^\mathrm{ss} \colon I_\lambda \to
\mathrm{GL}(V_{\ell'}(A)^\mathrm{ss})$ is an open subgroup of
$I_\lambda$ independent of the choice of $\ell'$. Further, $J_\lambda$
has index $e_{A_{K_\lambda}}$ in $I_\lambda$, so we may be sure that
$e_{A_{K_\lambda}} \mid \#\mathrm{GL}_{2g}(\F_{\ell'})$ \emph{for
  every $\ell' \nmid 2\ell$.}  (These are consequences of
\cite[Expos\'e IX]{SGA7I}.) Hence, the following is useful:   
\begin{lemma}
Fix an integer $n > 0$. For any prime $p$ and any odd prime $\ell'$,
the $p$-part of $\#\mathrm{GL}_n(\F_{\ell'})$ is divisible by
$p^{u_p}$, where 
\begin{equation*}
\begin{split}
u_2 & := v_2(n!) + n + \lfloor \tfrac{n}{2} \rfloor, \\
u_p & := v_p \left( \left\lfloor \tfrac{n}{p-1} \right\rfloor ! \right) +
\left\lfloor \tfrac{n}{p-1} \right\rfloor \qquad (p > 2).
\end{split}
\end{equation*}
Here, $v_p$ denotes the $p$-adic valuation, and $\lfloor \cdot
\rfloor$ denotes the greatest integer function. Moreover, for any $p$,
there are infinitely many $\ell'$ such that the $p$-part of
$\#\mathrm{GL}_n(\F_{\ell'})$ is exactly $p^{u_p}$.
\end{lemma}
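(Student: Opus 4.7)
The plan is to begin from the standard factorization
\[
\#\mathrm{GL}_n(\F_{\ell'}) = \ell'^{n(n-1)/2} \prod_{i=1}^n (\ell'^i - 1),
\]
which turns the problem into computing $v_p$ of each factor $\ell'^i - 1$. When $p = \ell'$ the $p$-part is $\ell'^{n(n-1)/2}$, and a direct comparison (noting that $u_p = 0$ once $p - 1 > n$) confirms $n(n-1)/2 \geq u_p$. The substance of the lemma is the case $p \neq \ell'$, where the first factor is a $p$-adic unit.

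For odd $p \neq \ell'$, I would invoke the Lifting-the-Exponent lemma. Setting $d := \ord_p(\ell')$ (so $d \mid p-1$), one has $p \mid \ell'^i - 1$ iff $d \mid i$, and in that case $v_p(\ell'^i - 1) = v_p(\ell'^d - 1) + v_p(i/d)$. Summing over $i = dj$ with $1 \leq j \leq \lfloor n/d \rfloor$ yields
\[
v_p\Bigl(\prod_{i=1}^n (\ell'^i - 1)\Bigr) = \lfloor n/d \rfloor \cdot v_p(\ell'^d - 1) + v_p\bigl(\lfloor n/d \rfloor!\bigr) \geq \lfloor n/d \rfloor + v_p\bigl(\lfloor n/d \rfloor!\bigr).
\]
The right-hand side is non-increasing in $d$, and $d \leq p-1$, so it is at least $\lfloor n/(p-1) \rfloor + v_p(\lfloor n/(p-1) \rfloor!) = u_p$.

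For $p = 2$, I would use the parity-split $2$-adic LTE: $v_2(\ell'^i - 1) = v_2(\ell' - 1)$ for $i$ odd, and $v_2(\ell' - 1) + v_2(\ell' + 1) + v_2(i) - 1$ for $i$ even. Among odd $\ell'$, the total is minimized when $v_2(\ell' - 1) = 1$ and $v_2(\ell' + 1) = 2$, i.e.\ $\ell' \equiv 3 \pmod 8$. A direct computation of the sum in this extremal case, combined with the identity $v_2(n!) = \lfloor n/2 \rfloor + v_2(\lfloor n/2 \rfloor!)$, recovers $u_2 = v_2(n!) + n + \lfloor n/2 \rfloor$ exactly.

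The infinitely-many statement follows from Dirichlet's theorem on primes in arithmetic progressions. For odd $p$, take $g \in \Z$ representing a generator of $(\Z/p^2\Z)^\times$ and restrict $\ell'$ to the class $g \pmod{p^2}$; then $\ord_p(\ell') = p-1$ and $v_p(\ell'^{p-1} - 1) = 1$, so all the inequalities above collapse to equalities. For $p = 2$, any $\ell' \equiv 3 \pmod 8$ realizes the extremal case directly. The main obstacle will be the $p = 2$ bookkeeping: applying the $2$-adic LTE carefully across the parity of $i$, pinning down $\ell' \equiv 3 \pmod 8$ as extremal, and reconciling the resulting parity-split sum with the compact closed form via the identity for $v_2(n!)$.
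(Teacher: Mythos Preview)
Your proposal is correct and follows precisely the approach the paper indicates: the paper does not give a self-contained proof but cites \cite{Guralnick-Lorenz:2006} for odd $p$ and suggests treating $p=2$ by considering $\ell' \equiv 3 \pmod{8}$, with the valuation formulas from \cite{Serre:1979} as input---which is exactly the LTE computation you carry out. Your argument is a fully worked-out version of what those references contain; the only point you leave slightly implicit is the verification of $n(n-1)/2 \geq u_p$ in the degenerate case $p=\ell'$ when $p-1 \leq n$, but this is a routine check.
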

\begin{proof}
The result is not new. For the case of odd $p$, a proof is given in
\cite[Lemma 7]{Guralnick-Lorenz:2006}; for the even case, a similar
argument can be constructed by considering primes $\ell' \equiv 3
\pmod{8}$. The formulas given in \cite[pg.~120]{Serre:1979}
are helpful. 
\end{proof}
Note, in particular, that $u_p = 0$ for $p > n + 1$. Consequently, the
product
\begin{equation*}
M'(n) := \prod_{\mbox{$p$ prime}} p^{u_p}
\end{equation*}
is always finite, and gives the greatest common divisor of $\{
\#\mathrm{GL}_n(\F_{\ell'}) : \ell' \nmid 2\ell \}$. The notation
$M'(n)$ is inspired by the similarity to the quantity $M(n)$, which
gives the least common multiple of all the orders of finite subgroups
of $\mathrm{GL}_n(\Q)$. This was first computed by Minkowski
\cite{Minkowski:1887} -- see \cite{Guralnick-Lorenz:2006} for a modern
account. In any case, we obtain the following:   
\begin{corollary}\label{cor:e-small-prime-factors}
The index $e_{A_{K_\lambda}}$ divides $M'(2g)$. Moreover, if $p$ is a
prime with $p \mid e_{A_{K_\lambda}}$, then $p \leq 2g + 1$.
\end{corollary}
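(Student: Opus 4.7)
The plan is to deduce both claims directly from what has already been assembled, without needing any new input beyond the divisibility $e_{A_{K_\lambda}} \mid \#\mathrm{GL}_{2g}(\F_{\ell'})$ (valid for every $\ell' \nmid 2\ell$) and the preceding lemma.

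For the first assertion, the strategy is to observe that the divisibility just quoted holds for \emph{all} primes $\ell'$ in an infinite set, so $e_{A_{K_\lambda}}$ divides the greatest common divisor of the family $\{\#\mathrm{GL}_{2g}(\F_{\ell'}) : \ell' \nmid 2\ell\}$. The lemma together with the subsequent remark identifies this gcd exactly with $M'(2g) = \prod_p p^{u_p}$: the lemma shows each term in the family is divisible by $M'(2g)$, while the ``moreover'' clause of the lemma shows that for every $p$ the exponent $u_p$ is actually attained for some $\ell'$, so $M'(2g)$ cannot be enlarged. Hence $e_{A_{K_\lambda}} \mid M'(2g)$, as desired.

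For the second assertion, suppose $p$ is a prime dividing $e_{A_{K_\lambda}}$. By the first part $p \mid M'(2g)$, so $u_p \geq 1$. If $p = 2$, the bound $p \leq 2g+1$ is automatic for $g \geq 1$. If $p > 2$, then the formula from the lemma reads
\begin{equation*}
u_p = v_p\!\left(\left\lfloor \tfrac{2g}{p-1} \right\rfloor !\right) + \left\lfloor \tfrac{2g}{p-1} \right\rfloor,
\end{equation*}
and both summands vanish when $\lfloor 2g/(p-1)\rfloor = 0$. So $u_p \geq 1$ forces $\lfloor 2g/(p-1)\rfloor \geq 1$, i.e.\ $p-1 \leq 2g$, i.e.\ $p \leq 2g+1$.

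There really is no main obstacle here: both assertions are bookkeeping from the lemma and the already-established behavior of the semistable reduction index. The only point requiring a sliver of care is making sure that the gcd interpretation of $M'(2g)$ stated in the paragraph preceding the corollary is used correctly in the first step, and that the exponent formula is read off correctly for small $p$ in the second step.
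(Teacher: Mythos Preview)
Your proposal is correct and follows exactly the reasoning the paper intends: the corollary is stated without proof because it is immediate from the preceding paragraph, which identifies $M'(2g)$ as the gcd of $\{\#\mathrm{GL}_{2g}(\F_{\ell'}) : \ell' \nmid 2\ell\}$ and notes that $u_p = 0$ for $p > 2g+1$. Your write-up simply makes these two deductions explicit.
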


\subsection{Structure of $G_K$-action on $\ell$-torsion}

For the remainder of this section, we always work under the following 
assumption: 
\begin{equation}
\boxed{[A] \in \mathscr{A}(K,g,\ell)} \tag{A1}\label{A1}
\end{equation}
Let $\chi \colon G_\Q \to \F_\ell^\times$ denote the cyclotomic
character modulo $\ell$. Set $\delta := [\F_\ell^\times : \chi(G_K)]$,
and note that $\delta$ divides both $n_K$ and $\ell-1 =
\#\F_\ell^\times$. We let $\rho_{A,\ell}$ denote the representation of
$G_K$-action on $A[\ell]$. If $[A] \in \mathscr{A}(K,g,\ell)$, the
abelian variety $A$ must have good reduction away from
$\ell$. Moreover, the structure of $\rho_{A,\ell}$ is constrained as
follows: 
\begin{lemma}\label{structure_lemma}
Under \eqref{A1}, there is a basis of $A[\ell]$ with respect to which
\begin{equation*}
\rho_{A,\ell} = \left( \begin{array}{cccc}
\chi^{i_1} & * & \cdots & * \\
           & \chi^{i_2} & \cdots & * \\
 & & \ddots & \vdots \\
 & & & \chi^{i_{2g}}
\end{array}
\right).
\end{equation*}
Moreover, the indices $i_r$ may be chosen so that $i_r \in \Z \cap [0,
\frac{\ell - 1}{\delta})$ for all $1 \leq r \leq 2g$.
\end{lemma}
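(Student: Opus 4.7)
The plan is to extract a complete $G_K$-stable flag of $A[\ell]$ whose successive quotients are one-dimensional eigenlines for powers of the cyclotomic character. The key input from \eqref{A1} is that the image $N := \rho_{A,\ell}(G_{K(\bmu_\ell)})$ inside $\rho_{A,\ell}(G_K) \subseteq \mathrm{GL}_{2g}(\F_\ell)$ is an $\ell$-group: by \eqref{A1}, $K(A[\ell^\infty]) \subseteq \ten$, which by construction is pro-$\ell$ over $K(\bmu_{\ell^\infty})$; since $K(\bmu_{\ell^\infty})/K(\bmu_\ell)$ is also pro-$\ell$, the extension $K(A[\ell], \bmu_\ell)/K(\bmu_\ell)$ is pro-$\ell$, i.e.\ $N$ is a finite $\ell$-group. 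The normality of $G_{K(\bmu_\ell)}$ in $G_K$ further makes $N$ normal in $\rho_{A,\ell}(G_K)$.

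I now build the flag by induction. At each stage I work with the current $G_K$-stable quotient $V$ of $A[\ell]$, starting with $V = A[\ell]$. Because the image of $N$ in $\mathrm{GL}(V)$ is still an $\ell$-group acting $\F_\ell$-linearly, the fixed subspace $V^N$ is nonzero, and the normality of $N$ makes $V^N$ a $G_K$-stable subspace. The induced action of $G_K$ on $V^N$ factors through $\mathrm{Gal}(K(\bmu_\ell)/K)$, which is cyclic of order $m := (\ell-1)/\delta$ coprime to $\ell$, so by Maschke's theorem $V^N$ decomposes into one-dimensional $G_K$-eigenlines. On any such eigenline, $G_K$ acts through a character of $\mathrm{Gal}(K(\bmu_\ell)/K)$ into $\F_\ell^\times$; since $\chi|_{G_K}$ embeds $\mathrm{Gal}(K(\bmu_\ell)/K)$ into $\F_\ell^\times$ with image of order exactly $m$, every such character has the form $\chi^i$ for a unique $i \in \Z \cap [0, m)$. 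Selecting one eigenline, lifting it to a $G_K$-stable subspace of $A[\ell]$, and iterating on the subsequent quotient yields a full flag $0 = V_0 \subset V_1 \subset \cdots \subset V_{2g} = A[\ell]$ with each $V_r/V_{r-1}$ a $\chi^{i_r}$-eigenline, $i_r \in \Z \cap [0, (\ell-1)/\delta)$. Any basis of $A[\ell]$ compatible with this flag realizes $\rho_{A,\ell}$ in the claimed upper-triangular form.

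There is no serious obstacle once the pro-$\ell$ input has been extracted from \eqref{A1}. The two facts invoked---that an $\ell$-group acting $\F_\ell$-linearly has nonzero fixed vectors, and that a cyclic group of order prime to $\ell$ acting on an $\F_\ell$-vector space splits into one-dimensional eigenspaces---are standard, as is the identification of characters of $\mathrm{Gal}(K(\bmu_\ell)/K)$ with integers modulo $m$. The only nuance worth flagging is that the exponents $i_r$ are a priori defined only modulo $m$, and the prescribed range $[0, (\ell-1)/\delta)$ is simply a set of representatives forced by the order of $\chi(G_K)$.
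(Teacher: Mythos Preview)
Your proof is correct and follows essentially the same route as the paper's. The paper cites \cite{Rasmussen-Tamagawa:2008} for the upper-triangular form---which is precisely the flag argument you spell out, using that $\Gal(\ten/K(\bmu_\ell))$ is pro-$\ell$---and then invokes the subsequent group-theoretic lemma (equivalent to your observation that every character of the cyclic group $\Gal(K(\bmu_\ell)/K)$ into $\F_\ell^\times$ is a power of $\chi|_{G_K}$) to pin down $i_r \in [0,(\ell-1)/\delta)$.
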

\begin{proof}
This is an obvious generalization of \cite[Lemma
3]{Rasmussen-Tamagawa:2008}, and in fact, the proof given there may be
followed almost verbatim, with $G = \Gal(\ten/K)$,
$N=\Gal(\ten/K(\bmu_\ell))$, $\Delta = \Gal(K(\bmu_\ell)/K)$. In
\cite{Rasmussen-Tamagawa:2008}, it is only claimed that $i_r < \ell -
1$. We may be sure that the stronger bounds on $i_r$ hold, by the
following lemma.  
\end{proof}

\begin{lemma}
Let $G$ be a profinite group, and let $N \lhd G$ be a normal
pro-$\ell$ subgroup of $G$. Suppose that $\Delta$ is a finite cyclic
group and $\ell \nmid \#\Delta$. Let $\chi \colon G \to \Delta$ be a
group homomorphism with $\ker \chi = N$, and let $\psi \colon G \to
\Delta$ be any other group homomorphism. Then there exists $b \in \Z
\cap [0, \#\chi(G))$ such that $\psi = \chi^b$. 
\end{lemma}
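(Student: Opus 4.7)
The plan is to reduce the problem to a statement about the finite cyclic quotient $G/N$ and then use uniqueness of subgroups in a cyclic group. The main point is that $\psi$ automatically factors through $N$, after which everything becomes elementary.

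First I would observe that, since $N$ is a normal pro-$\ell$ subgroup and $\Delta$ is finite of order prime to $\ell$, any continuous homomorphism $N \to \Delta$ is trivial: the image would be simultaneously a pro-$\ell$ group and a subgroup of a group of order prime to $\ell$. (Here continuity of $\psi$ is implicit, as everywhere in the paper.) Therefore $\psi(N) = 1$, and both $\chi$ and $\psi$ descend to homomorphisms $\bar\chi, \bar\psi \colon G/N \to \Delta$. By construction $\bar\chi$ is injective, so $G/N \cong \chi(G)$ is a subgroup of the cyclic group $\Delta$; in particular, $G/N$ itself is cyclic.

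Next I would analyze $\bar\psi$. Since $G/N$ is cyclic of order $\#\chi(G)$, the order of $\bar\psi(G/N)$ divides $\#\chi(G)$. But $\Delta$ is cyclic and $\chi(G) \leq \Delta$ is the unique subgroup of order $\#\chi(G)$, so every element of $\Delta$ whose order divides $\#\chi(G)$ already lies in $\chi(G)$. In particular, $\bar\psi(G/N) \subseteq \chi(G)$.

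Finally, pick any generator $x$ of $G/N$. Then $\bar\chi(x)$ generates $\chi(G)$ and $\bar\psi(x) \in \chi(G)$, so there is a unique integer $b \in \Z \cap [0, \#\chi(G))$ with $\bar\psi(x) = \bar\chi(x)^b$. Since both $\bar\psi$ and $\bar\chi^b$ agree on the generator $x$ of the cyclic group $G/N$, they coincide as homomorphisms, and pulling back to $G$ gives $\psi = \chi^b$ with the desired bound on $b$. There is no serious obstacle here; the only step requiring a moment of thought is the vanishing of $\psi$ on $N$, which rests solely on the coprimality of $\#\Delta$ with $\ell$.
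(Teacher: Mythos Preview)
Your proof is correct and follows essentially the same route as the paper's: both observe $N \leq \ker\psi$ from the coprimality of $\#\Delta$ with $\ell$, pass to the cyclic quotient $G/N \cong \chi(G)$, use uniqueness of subgroups in the cyclic group $\Delta$ to conclude $\psi(G) \subseteq \chi(G)$, and then match up generators to extract the exponent $b$.
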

\begin{proof}
As $\ell \nmid \#\Delta$, we clearly have $N \leq \ker \psi$. Now,
both $G/N$ and $\chi(G)$ are cyclic, so let $gN$ and $x$ be generators
of these respective groups such that $\chi(g) = x$. By the
containments $N \leq \ker \psi \leq G$, we see $\#\psi(G) = [G: \ker
\psi]$ divides $[G:N] = \#\chi(G)$. We must have $\psi(G) \leq
\chi(G)$, since these are subgroups of the
same cyclic group $\Delta$. So $\psi(g) = x^b$ for some $0 \leq b <
\#\chi(G)$. Necessarily, $\psi = \chi^b$. 
\end{proof}

\subsection{Tate-Oort Theory}

Let $L$ be the Galois extension of $K_\lambda^\mathrm{ur}$ of degree
$e_{A_{K_\lambda}}$ corresponding to $J_\lambda$. Note that the extension
$L/K_\lambda^\mathrm{ur}$ descends (non-canonically) to a (possibly
non-Galois) extension of $K_\lambda$ of degree $e_{A_{K_\lambda}}$,
and even descends to an extension of $K$ of degree $e_{A_{K_\lambda}}$
(by approximation). As $A_L$ is semistable over $\mathscr{O}_L$, by
\cite[Expos\'e IX, Prop.~5.6]{SGA7I}, we see that each character
$\chi^{i_r} \colon J_\lambda = G_L \to \F_\ell^\times$ extends to a
finite group scheme over $\mathscr{O}_L$. Let $\psi_\lambda \colon
J_\lambda \rightarrow \F_\ell^\times$ be the fundamental character
over $L$. Thus, $\chi = \psi_\lambda^{e_\lambda}$ on $J_\lambda$,
where $e_\lambda := e_{A_{K_\lambda}} \cdot e_{\lambda/\ell}$, by
\cite[\S1, Prop.~8]{Serre:1972}.   

By the theory of Tate-Oort \cite{Tate-Oort:1970}, $\chi^{i_r} =
\psi_\lambda^{j_{\lambda,r}}$ on $J_\lambda$, where $j_{\lambda,r} \in
\Z \cap [0, e_\lambda]$. From this, we obtain:
\begin{equation}\label{eq:eij}
e_\lambda i_r \equiv j_{\lambda,r} \pmod{\ell - 1}.
\end{equation}
Among all primes of $K$ which do not divide $\ell$, choose
$\frak{p}_0$ whose residue field $\kappa(\frak{p}_0)$ is of minimal
order, and set $q_0 := \#\kappa(\frak{p}_0)$. Considering primes of
$K$ above $2$ and $3$, we see $q_0 \leq 3^{n_K}$ in general and $q_0
\leq 2^{n_K}$ if $\ell \neq 2$. For any integer $n > 0$, let
$P_{\frak{p}_0, n} \in \Z[T]$ denote the characteristic polynomial of
$\Frob_{\frak{p}_0}^n$ acting on $V_\ell(A)$, which has degree
$2g$. Fix an algebraic closure of $\Q$ and let
$\{\alpha_{\frak{p}_0,r}\}_{r=1}^{2g}$ denote the roots of
$P_{\frak{p}_0, 1}$ (counting multiplicity). These roots satisfy
$|\alpha_{\frak{p}_0,r}| = q_0^{1/2}$,
and the $n$-th powers of the $\alpha_{\frak{p}_0,r}$ give exactly the
roots of $P_{\frak{p}_0,n}$. On the other hand, modulo $\ell$, the
roots of $P_{\frak{p}_0,n}$ are given by $\{
\chi^{i_r}(\Frob_{\frak{p}_0}^n) \}_{r=1}^{2g}$. From this and the
congruence \eqref{eq:eij}, we have
\begin{equation}\label{eq:alpha-q}
\prod_{r=1}^{2g} \bigl( T - \alpha_{\frak{p}_0,r}^{e_\lambda}) \bigr)
= P_{\frak{p}_0, e_\lambda}(T) \equiv \prod_{r = 1}^{2g} (T -
q_0^{j_{\lambda,r}}) \pmod{\ell}.  
\end{equation}
Let
\begin{equation*}
S(T,x_1,\dots,x_{2g}) := \prod_{i=1}^{2g} (T - x_i) \in
\Z[x_1,\dots,x_{2g}][T], 
\end{equation*}
and let $S_k(x_1, \dots, x_{2g})$ denote the coefficient of $T^{2g-k}$
in $S$. The polynomials $S_k$ are symmetric in the $x_j$, and so
$S_k(\alpha_{\frak{p}_0,1}^n, \dots, \alpha_{\frak{p}_0,2g}^n) \in \Z$
for any $n \geq 1$. Using \eqref{eq:alpha-q}, we have
\begin{equation}\label{eq:Sm_cong}
S_k(\alpha_{\frak{p}_0,1}^{e_\lambda}, \dots, \alpha_{\frak{p}_0,2g}^{e_\lambda}) \equiv
S_k(q_0^{j_{\lambda,1}}, \dots, q_0^{j_{\lambda,2g}}) \pmod{\ell}.
\end{equation}
As $S_k$ is a homogeneous polynomial of degree $k$ with
$\binom{2g}{k}$ terms, and $j_{\lambda,r} \leq e_\lambda$, we
certainly have by the triangle inequality:
\begin{equation*}
\left| S_k(\alpha_{\frak{p}_0,1}^{e_\lambda}, \dots,
  \alpha_{\frak{p}_0,2g}^{e_\lambda}) - S_k(q_0^{j_{\lambda,1}},
  \dots, q_0^{j_{\lambda,2g}}) \right| \leq \binom{2g}{k} \cdot
q_0^{e_\lambda k/ 2} + \binom{2g}{k} q_0^{e_\lambda k}.
\end{equation*}
We add the following assumption:
\begin{equation}
\boxed{\quad \ell > \max \left\{ \binom{2g}{k}
    \left( q_0^{e_\lambda k} + q_0^{\frac{e_\lambda k}{2}} \right) :
    \lambda \mid \ell,\; 1 \leq k \leq 2g \right\}. \quad} \tag{A2} \label{A2}
\end{equation} 
For example, this is certainly satisfied if
\begin{equation}\label{eq:A2Bound}
\ell > C_7 = C_7(g, n_K) := 2 \cdot {2g \choose g} \cdot 3^{2g \cdot
  n_K^2 \cdot M'(2g)}. 
\end{equation}
Take $k = 1$; as $q_0 \geq 2$ and $e_\lambda \geq 1$, we note that
$\ell > 2g + 1$ always under \eqref{A2}.

Under \eqref{A2}, the congruences \eqref{eq:Sm_cong} require equality in
$\Z$, which means the sets (possibly with multiplicity) $\{
\alpha_{\frak{p}_0,r}^{e_\lambda} \}_{r=1}^{2g}$ and $\{
q_0^{j_{\lambda,r}} \}_{r=1}^{2g}$ must be equal. By the Weil
conjectures, we must have $j_{\lambda,r} = \frac{1}{2}e_\lambda$ for
each $r$. Thus, $2 \mid e_\lambda$. Moreover, $A_L$ has good reduction
with $\ell$-rank $0$. (Otherwise, we would have $j_{\lambda,r} = 
e_\lambda$ for some $r$.) Combining with \eqref{eq:eij}, we obtain: 
\begin{equation}\label{eq:e-lambda}
e_\lambda i_r \equiv \frac{e_\lambda}{2} \pmod{(\ell-1)}, \qquad 1 \leq r \leq
2g.
\end{equation}
Set $e = \gcd \{ e_\lambda : \lambda \mid \ell\}$.
\begin{lemma}\label{lem:e-properties}
Assume \eqref{A1} and \eqref{A2} hold. Then
\begin{enumerate}[\quad (a)]
\item $e \mid M'(2g)n_K$,
\item $e \mid M'(2g)$ if $\ell \nmid \Delta_K$,
\item $(e, \ell - 1) = (\frac{e}{2}, \ell - 1)$,
\item $4 \mid e$,
\item For any $1 \leq r, s \leq 2g$, $\frac{e}{2}(i_r + i_s - 1)
  \equiv 0 \pmod{(\ell - 1)}$.
\end{enumerate}
\end{lemma}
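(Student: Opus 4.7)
The plan is to handle (a)--(b) by unpacking the factorization $e_\lambda = e_{A_{K_\lambda}} \cdot e_{\lambda/\ell}$ established earlier, and to treat (c)--(e) via a 2-adic analysis of the congruence \eqref{eq:e-lambda}.

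For (a), Corollary \ref{cor:e-small-prime-factors} gives $e_{A_{K_\lambda}} \mid M'(2g)$, while the standard ramification estimate yields $e_{\lambda/\ell} \mid [K_\lambda:\Q_\ell] \mid n_K$. Multiplying these and taking the gcd over $\lambda \mid \ell$ gives $e \mid M'(2g)\, n_K$. For (b), the assumption $\ell \nmid \Delta_K$ forces $e_{\lambda/\ell} = 1$ for every $\lambda \mid \ell$, so $e_\lambda = e_{A_{K_\lambda}} \mid M'(2g)$, and hence $e \mid M'(2g)$.

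The core of the lemma is (d). Rewriting \eqref{eq:e-lambda} in the form $(e_\lambda/2)(2i_r - 1) \equiv 0 \pmod{\ell-1}$, I use that $\ell > 2g + 1 \geq 3$ (noted immediately after \eqref{A2}), so $\ell - 1$ is even; since $2i_r - 1$ is odd, the entire 2-part of $\ell - 1$ must divide $e_\lambda/2$. This gives $v_2(e_\lambda) \geq v_2(\ell - 1) + 1 \geq 2$ for every $\lambda \mid \ell$, whence $4 \mid e_\lambda$ and thus $4 \mid e$. Part (c) then follows by comparing $\gcd(e, \ell-1)$ and $\gcd(e/2, \ell-1)$ prime by prime: at $2$, both gcds equal $2^{v_2(\ell-1)}$ (using $v_2(e) - 1 \geq v_2(\ell-1)$), while at every odd prime $p$ we have $v_p(e) = v_p(e/2)$ trivially.

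Finally, (e) will fall out of (c). Summing two instances of \eqref{eq:e-lambda} at $r$ and $s$ yields $e_\lambda(i_r + i_s - 1) \equiv 0 \pmod{\ell-1}$ for every $\lambda$. Since the set of $m \in \Z$ satisfying $m(i_r + i_s - 1) \equiv 0 \pmod{\ell-1}$ is closed under gcd, we conclude $e(i_r + i_s - 1) \equiv 0 \pmod{\ell-1}$; applying (c) to rewrite $(\ell-1)/\gcd(e,\ell-1) = (\ell-1)/\gcd(e/2,\ell-1)$ then halves the coefficient, giving (e). The only delicate point I anticipate is the 2-adic bookkeeping feeding into (d) and (c); everything else is essentially algebraic shuffling once the factorization of $e_\lambda$ and the congruence \eqref{eq:e-lambda} are in hand.
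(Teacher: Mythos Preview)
Your proposal follows essentially the same route as the paper, and parts (b)--(e) are correct and match the paper's argument closely (the only cosmetic difference being that the paper first passes from the $e_\lambda$ to $e$, obtaining $\tfrac{e}{2}(2i_r-1)\equiv 0\pmod{\ell-1}$, before running the $2$-adic count, whereas you run it at each $\lambda$ first).

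There is, however, a genuine error in your argument for (a): the claimed divisibility $[K_\lambda:\Q_\ell]\mid n_K$ is false in general. For instance, in a quintic field where $\ell$ lies under two primes of local degrees $2$ and $3$, neither local degree divides $n_K=5$. What \emph{is} true---and what the paper uses---is that $\gcd_{\lambda\mid\ell}\{e_{\lambda/\ell}\}$ divides $n_K$: since $e_{\lambda/\ell}\mid n_\lambda$ for each $\lambda$, the gcd divides every $n_\lambda$ and hence divides their sum $\sum_\lambda n_\lambda = n_K$. Combined with $e_{A_{K_\lambda}}\mid M'(2g)$, this gives
\[
e=\gcd_{\lambda\mid\ell}\{e_{A_{K_\lambda}}e_{\lambda/\ell}\}\ \Bigm|\ M'(2g)\cdot\gcd_{\lambda\mid\ell}\{e_{\lambda/\ell}\}\ \Bigm|\ M'(2g)\,n_K,
\]
which is the correct argument. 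With this fix, your proof of (a) goes through.
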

\begin{proof}
Let $n_\lambda$ denote the local degree $[K_\lambda : \Q_\ell]$ at
$\lambda$. As $\sum_{\lambda \mid \ell} n_\lambda = n_K$ and
$e_{\lambda/\ell} \mid n_\lambda$, we see that $\gcd \{e_{\lambda/\ell}
: \lambda \mid \ell\} \bigm| n_K$. Now, by Corollary
\ref{cor:e-small-prime-factors},
\begin{equation*}
e = \gcd_{\lambda \mid \ell} \{e_{A_{K_\lambda}} \cdot e_{\lambda/\ell} \}
  \Bigm| \gcd_{\lambda \mid \ell} \{M'(2g) \cdot e_{\lambda/\ell} \} =
  M'(2g) \gcd_{\lambda \mid \ell} \{e_{\lambda/\ell} \} \Bigm| M'(2g)n_K,
\end{equation*}
which proves (a). When $\ell \nmid \Delta_K$, all $e_{\lambda/\ell} =
1$, so that $e = \gcd \{e_{A_{K_\lambda}} : \lambda \mid \ell\}$, and
(b) follows by Corollary \ref{cor:e-small-prime-factors} also. Since
$2 \mid e_\lambda$ for all $\lambda$, $e$ must be even. Now, from
\eqref{eq:e-lambda}, we deduce
\begin{equation}\label{eq:e2ir-1}
  \frac{e}{2}(2i_r - 1) \equiv 0 \pmod{(\ell-1)}.
\end{equation}
As $(2i_r - 1)$ is odd, this implies $\ord_2(e) >
\ord_2(\ell-1)$. Thus, (c) holds. Under \eqref{A2}, $\ell > 2$, so
$\ord_2(e) > 1$, proving (d). Finally, adding the congruence
\eqref{eq:e2ir-1} for two indices $1 \leq r,s \leq 2g$ gives
\begin{equation*}
e(i_r + i_s - 1) \equiv 0 \pmod{(\ell-1)}.
\end{equation*}
This, combined with (c), implies (e).
\end{proof}

Already, we may prove a finiteness result for everywhere semistable
abelian varieties. For fixed $K, g, \ell$, let
$\mathscr{A}^\mathrm{ss}(K,g,\ell)$ denote the subset of
$\mathscr{A}(K,g,\ell)$ containing only classes of abelian varieties 
with everywhere semistable reduction; likewise, let
$\mathscr{A}^\mathrm{ss}(K,g)$ denote the set of pairs $([A], \ell)
\in \mathscr{A}(K,g)$ for which $A$ has everywhere semistable
reduction.
\begin{theorem}\label{thm:semistable-finite}
For any $K$ and any $g > 0$, the set $\mathscr{A}^\mathrm{ss}(K,g)$
is finite. Equivalently, $\mathscr{A}^\mathrm{ss}(K,g,\ell) =
\varnothing$ for $\ell \gg 0$. 
\end{theorem}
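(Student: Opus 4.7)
The plan is to combine the constraints on the index $e$ produced in Lemma~\ref{lem:e-properties} with the triviality of $e_{A_{K_\lambda}}$ in the semistable case, obtaining a contradiction for all but finitely many $\ell$.

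First I would reduce to large $\ell$. For any fixed $\ell$, the set $\mathscr{A}(K,g,\ell)$ is already finite by the Shafarevich conjecture (Faltings' theorem), so it suffices to show that $\mathscr{A}^\mathrm{ss}(K,g,\ell) = \varnothing$ once $\ell$ is large enough. Concretely, I would restrict to primes $\ell$ satisfying the two conditions $\ell > C_7(g,n_K)$ (which guarantees \eqref{A2} by \eqref{eq:A2Bound}) and $\ell \nmid \Delta_K$; both conditions exclude only finitely many $\ell$.

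For such an $\ell$, suppose $[A] \in \mathscr{A}^\mathrm{ss}(K,g,\ell)$. Since $A$ is semistable at every prime $\lambda \mid \ell$, the definition of $e_{A_{K_\lambda}}$ gives $e_{A_{K_\lambda}} = 1$ for every $\lambda \mid \ell$. Combined with $\ell \nmid \Delta_K$, which forces $e_{\lambda/\ell} = 1$ for all $\lambda \mid \ell$, we obtain
\begin{equation*}
e_\lambda = e_{A_{K_\lambda}} \cdot e_{\lambda/\ell} = 1 \qquad \text{for every } \lambda \mid \ell,
\end{equation*}
and hence $e = \gcd\{e_\lambda : \lambda \mid \ell\} = 1$. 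On the other hand, the hypotheses \eqref{A1} and \eqref{A2} are both in force, so Lemma~\ref{lem:e-properties}(d) gives $4 \mid e$. This is an immediate contradiction, so $\mathscr{A}^\mathrm{ss}(K,g,\ell) = \varnothing$ for all such $\ell$.

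Combining the two ranges of $\ell$, the whole set $\mathscr{A}^\mathrm{ss}(K,g)$ is contained in the union of finitely many finite sets $\mathscr{A}(K,g,\ell)$ (one for each $\ell \leq C_7(g,n_K)$ or $\ell \mid \Delta_K$), and is therefore finite. There is no real obstacle here: the ingredients — Tate-Oort applied to the semistable quotient, the Weil bound forcing $2 \mid e_\lambda$, and the parity refinement yielding $4 \mid e$ — have all been assembled in the preceding subsections, and the semistable hypothesis is what collapses $e$ to~$1$ for almost all $\ell$. The only modest point to verify is that the finitely many excluded primes $\ell \mid \Delta_K$ and $\ell \leq C_7$ really do contribute finitely, which is covered by Faltings' theorem applied one $\ell$ at a time.
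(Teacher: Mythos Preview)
Your proof is correct and follows essentially the same route as the paper: restrict to $\ell$ large enough that $\ell \nmid \Delta_K$ and \eqref{A2} holds, observe that semistability forces $e_\lambda = 1$ for every $\lambda \mid \ell$, and contradict the divisibility $4 \mid e$ from Lemma~\ref{lem:e-properties}. The only cosmetic difference is that the paper phrases the contradiction as ``$e > 1$ under \eqref{A2}'' rather than citing part~(d) explicitly, and leaves the reduction via Faltings' theorem implicit in the ``equivalently'' clause of the statement.
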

\begin{proof}
  For sufficiently large $\ell$, we may be sure that both $\ell \nmid
  \Delta_K$, and that \eqref{A2} holds. Suppose $[A] \in
  \mathscr{A}^\mathrm{ss}(K,g,\ell)$. As $\ell \nmid \Delta_K$, we
  know $e_{\lambda/\ell} = 1$ for every prime $\lambda \mid \ell$ in
  $K$. Hence, $e_\lambda = e_{A_{K_\lambda}}$ for every $\lambda$. But
  as $A$ is already semistable at $\lambda$, $e_{A_{K_\lambda}} =
  1$. Thus $e_\lambda = 1$, and so $e = 1$ also. But under \eqref{A2},
  $e > 1$, a contradiction. Thus, $\mathscr{A}^\mathrm{ss}(K,g,\ell) =
  \varnothing$.
\end{proof}
In fact, a uniform version of Theorem \ref{thm:semistable-finite} is
available for many values of $n_K$.
\begin{lemma}\label{lem:2-eAKlambda}
Suppose $[A] \in \mathscr{A}(K,g,\ell)$ and $\ell > C_7(g,n_K)$.
\begin{enumerate}[(a)]
\item If $2 \nmid e_{A_{K_\lambda}}$ for every $\lambda \mid \ell$,
  then $4 \mid n_K$.
\item If $4 \nmid e_{A_{K_\lambda}}$ for every $\lambda \mid \ell$,
  then $2 \mid n_K$.
\end{enumerate}
\end{lemma}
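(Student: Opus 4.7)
The plan is to leverage Lemma \ref{lem:e-properties}(d), which under hypotheses \eqref{A1} and \eqref{A2} gives $4 \mid e = \gcd_{\lambda \mid \ell} e_\lambda$, together with the factorization $e_\lambda = e_{A_{K_\lambda}} \cdot e_{\lambda / \ell}$ and the global identity $n_K = \sum_{\lambda \mid \ell} e_{\lambda/\ell} f_{\lambda/\ell}$. Note that the bound $\ell > C_7(g, n_K)$ guarantees \eqref{A2} by \eqref{eq:A2Bound}, so the conclusions of Lemma \ref{lem:e-properties} are in force.

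For part (a), the plan is to assume $e_{A_{K_\lambda}}$ is odd for every $\lambda \mid \ell$. Then $v_2(e_\lambda) = v_2(e_{\lambda/\ell})$, and since $v_2(e) = \min_{\lambda} v_2(e_\lambda) \geq 2$, we conclude that $4 \mid e_{\lambda/\ell}$ for every $\lambda$. Since $e_{\lambda/\ell} \mid n_\lambda := [K_\lambda : \Q_\ell]$, this forces $4 \mid n_\lambda$ for every $\lambda$, and summing over $\lambda \mid \ell$ gives $4 \mid n_K$.

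For part (b), I would assume $4 \nmid e_{A_{K_\lambda}}$ for every $\lambda$, so $v_2(e_{A_{K_\lambda}}) \leq 1$. Combining with $v_2(e_\lambda) \geq v_2(e) \geq 2$, the identity $v_2(e_\lambda) = v_2(e_{A_{K_\lambda}}) + v_2(e_{\lambda/\ell})$ forces $v_2(e_{\lambda/\ell}) \geq 1$ for every $\lambda$. Hence every $n_\lambda$ is even, and $n_K = \sum_{\lambda \mid \ell} n_\lambda$ is even.

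There is no real obstacle here: both parts are essentially bookkeeping with $2$-adic valuations once Lemma \ref{lem:e-properties}(d) is in hand. The only subtle point is to remember that the divisibility $e_{\lambda/\ell} \mid n_\lambda$ (rather than an equality) is what enters, but this is enough since the global sum $\sum n_\lambda = n_K$ inherits any common $2$-adic divisibility of the local degrees.
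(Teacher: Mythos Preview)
Your proof is correct and follows essentially the same approach as the paper: both use Lemma~\ref{lem:e-properties}(d) to obtain $4 \mid e_\lambda = e_{A_{K_\lambda}} e_{\lambda/\ell}$ for every $\lambda \mid \ell$, deduce the required $2$-adic divisibility of each $e_{\lambda/\ell}$, and then sum over $\lambda$ via $n_K = \sum_{\lambda \mid \ell} e_{\lambda/\ell} f_{\lambda/\ell}$. Your detour through $e_{\lambda/\ell} \mid n_\lambda$ and $n_K = \sum_\lambda n_\lambda$ is equivalent, since $n_\lambda = e_{\lambda/\ell} f_{\lambda/\ell}$.
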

\begin{proof}
  By Lemma \ref{lem:e-properties}, we know $4 \mid e_{A_{K_\lambda}}
  e_{\lambda/\ell}$ for each $\lambda \mid \ell$. If $2 \nmid
  e_{A_{K_\lambda}}$ for every $\lambda$, then we must have $4 \mid
  e_{\lambda/\ell}$. Since $n_K = \sum_{\lambda \mid \ell}
  e_{\lambda/\ell} f_{\lambda/\ell}$, we obtain (a). Part (b) may be
  argued the same way.
\end{proof}
Thus, we obtain a uniform version of Theorem
\ref{thm:semistable-finite} for many values of $n_K$.
\begin{corollary}
  Let $n$ be a positive integer, not divisible by $4$. For any number
  field $K/\Q$ with $n_K = n$, any integer $g > 0$, and any rational
  prime $\ell > C_7(g,n)$, $\mathscr{A}^\mathrm{ss}(K,g,\ell) =
  \varnothing$.
\end{corollary}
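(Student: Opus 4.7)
The plan is to deduce the corollary almost immediately from Lemma \ref{lem:2-eAKlambda}(a) by contradiction. Suppose for contradiction that for some $K$ with $n_K = n$, some $g > 0$, and some prime $\ell > C_7(g,n)$, the set $\mathscr{A}^\mathrm{ss}(K,g,\ell)$ is non-empty; pick a representative $[A]$ in it. Then $[A] \in \mathscr{A}(K,g,\ell)$, so assumption \eqref{A1} is in force, and the hypothesis $\ell > C_7(g,n) = C_7(g,n_K)$ puts us exactly in the setting of Lemma \ref{lem:2-eAKlambda}.

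Since $A$ has everywhere semistable reduction, in particular $A_{K_\lambda}$ is already semistable over $\mathscr{O}_{K_\lambda}$ for every prime $\lambda \mid \ell$; by definition of $e_{A_{K_\lambda}}$ as the minimal ramification index needed to achieve semistability, this forces $e_{A_{K_\lambda}} = 1$ for every such $\lambda$. In particular $2 \nmid e_{A_{K_\lambda}}$ for every $\lambda \mid \ell$, so Lemma \ref{lem:2-eAKlambda}(a) yields $4 \mid n_K = n$, contradicting the standing hypothesis that $4 \nmid n$. Therefore $\mathscr{A}^\mathrm{ss}(K,g,\ell)$ must be empty.

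There is essentially no obstacle: all of the analytic, Tate--Oort, and Chebotarev machinery has already been absorbed into $C_7$ and Lemma \ref{lem:e-properties}, and Lemma \ref{lem:2-eAKlambda} packages exactly the divisibility fact needed. The only thing to double-check is that the semistability hypothesis really does give $e_{A_{K_\lambda}} = 1$ (as opposed to merely dividing something), which is immediate from the definition of the index of semistable reduction. The statement is genuinely a uniform corollary because the bound $C_7(g,n)$ depends only on $g$ and $n = n_K$, not on $K$ itself.
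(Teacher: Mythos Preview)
Your proof is correct and follows essentially the same approach as the paper: assume for contradiction that $\mathscr{A}^\mathrm{ss}(K,g,\ell)$ is non-empty, observe that everywhere semistable reduction forces $e_{A_{K_\lambda}} = 1$ for every $\lambda \mid \ell$, and then invoke Lemma \ref{lem:2-eAKlambda}(a) to conclude $4 \mid n_K$, contradicting the hypothesis. Your additional remarks verifying the hypotheses of the lemma and justifying $e_{A_{K_\lambda}} = 1$ are accurate elaborations of steps the paper leaves implicit.
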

\begin{proof}
  Were $\mathscr{A}^\mathrm{ss}(K,g,\ell)$ non-empty, it would contain
  a class $[A]$ for which $e_{A_{K_\lambda}} = 1$ for every $\lambda
  \mid \ell$. However, this contradicts Lemma \ref{lem:2-eAKlambda}(a).
\end{proof}
\begin{remark}
  Note that the proof of Theorem \ref{thm:semistable-finite} actually
  yields a stronger result, as we only need the existence of one
  $\lambda \mid \ell$ for which $A$ possesses semistable
  reduction. Hence, we have actually proven the finiteness of the
  subset of pairs $([A], \ell)$ in $\mathscr{A}(K,g)$ for which $A$ possesses
  semistable reduction for at least one prime of $K$ dividing
  $\ell$. (To be clear, this improvement is not available in the
  uniform version of the corollary, which requires semistable
  reduction at every prime above $\ell$.)
\end{remark}

\section{Supersingularity at small primes}

\subsection{The homomorphism $\bep$} We keep the notations of the
previous section, and assume the hypotheses \eqref{A1} and \eqref{A2}
hold. Recall $\chi$ denotes the cyclotomic character
modulo $\ell$. For any $r$ and $s$ with $1 \leq r, s \leq 2g$, set
$\varepsilon_{r,s} := \chi^{i_r + i_s - 1}$. We further define
\begin{equation*}
\begin{split}
\bep := (\varepsilon_{r,s})_{1 \leq r, s \leq 2g} & \colon G_\Q
\longrightarrow (\F_\ell^\times)^{(2g)^2}, \\
\bep_0 := (\varepsilon_{r,r})_{1 \leq r \leq 2g} & \colon G_\Q
\longrightarrow (\F_\ell^\times)^{2g}.
\end{split}
\end{equation*}
Set $m_\Q := \# \bep(G_\Q)$, and $m_{0,\Q} := \#
\bep_0(G_\Q)$. Then $m_\Q$ is the least common multiple of the orders
of the $\varepsilon_{r,s}$, and $m_{0,\Q}$ is likewise the least
common multiple of the orders of the $\varepsilon_{r,r}$. Hence,
$m_{0,\Q} \mid m_\Q$. Clearly $\bep$ factors through $\F_\ell^\times$,
so $m_\Q \mid (\ell - 1)$ and $\bep(G_\Q)$ is cyclic. Further, the image
has exponent $\frac{e}{2}$, by Lemma \ref{lem:e-properties}(e). Thus,
$m_\Q \mid \frac{e}{2}$, and so $m_{0,\Q} \mid m_\Q \mid \left(
  \tfrac{e}{2}, \ell - 1 \right)$.
\begin{lemma}\label{lemma:mQ}
We have $m_{0,\Q} = m_\Q$. Moreover, $\ord_2 m_\Q = \ord_2 (\ell -
1)$. In particular, $2 \mid m_\Q$. 
\end{lemma}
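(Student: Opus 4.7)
The plan is to exploit a simple multiplicative identity among the characters $\varepsilon_{r,s}$, together with the fact that the exponents $2i_r-1$ appearing in $\varepsilon_{r,r}$ are odd.

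First I would record the algebraic identity
\begin{equation*}
\varepsilon_{r,s}^{\,2} = \chi^{2(i_r + i_s - 1)} = \chi^{(2i_r - 1) + (2i_s - 1)} = \varepsilon_{r,r}\,\varepsilon_{s,s},
\end{equation*}
valid for all $1\le r,s\le 2g$. Since $\varepsilon_{r,r}$ and $\varepsilon_{s,s}$ each have order dividing $m_{0,\Q}$, it follows that $\varepsilon_{r,s}^{\,2}$ has order dividing $m_{0,\Q}$, hence $\mathrm{ord}(\varepsilon_{r,s}) \mid 2 m_{0,\Q}$. Taking the least common multiple over $r,s$ gives $m_\Q \mid 2 m_{0,\Q}$. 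Combined with the already-established divisibility $m_{0,\Q} \mid m_\Q$, we see that $m_\Q/m_{0,\Q} \in \{1,2\}$.

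Next I would compute the $2$-part of $m_{0,\Q}$ directly. Since $\chi\colon G_\Q\to\F_\ell^\times$ is surjective, $\chi$ has exact order $\ell-1$, and for any integer $a$ the order of $\chi^a$ equals $(\ell-1)/\gcd(a,\ell-1)$. Applied to $a = 2i_r - 1$, which is odd, we see $\gcd(2i_r - 1,\ell - 1)$ is odd, so
\begin{equation*}
\ord_2\bigl(\mathrm{ord}(\varepsilon_{r,r})\bigr) = \ord_2\!\left(\frac{\ell - 1}{\gcd(2i_r - 1,\ell - 1)}\right) = \ord_2(\ell - 1).
\end{equation*}
Taking the lcm of these orders preserves the $2$-part, so $\ord_2(m_{0,\Q}) = \ord_2(\ell-1)$. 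Because $m_{0,\Q} \mid m_\Q \mid \ell - 1$, the same equality $\ord_2(m_\Q) = \ord_2(\ell - 1)$ must hold; in particular $2 \mid m_\Q$ (using $\ell > 2$, which holds under \eqref{A2}).

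Finally, to conclude $m_{0,\Q} = m_\Q$, I combine the two previous steps: if instead $m_\Q = 2m_{0,\Q}$, then $\ord_2(m_\Q) = \ord_2(m_{0,\Q}) + 1 = \ord_2(\ell-1) + 1$, contradicting the equality $\ord_2(m_\Q) = \ord_2(\ell - 1)$ just established. Hence $m_\Q = m_{0,\Q}$, as claimed. I do not anticipate a serious obstacle here; the only subtlety is ensuring that the identity $\varepsilon_{r,s}^{\,2} = \varepsilon_{r,r}\varepsilon_{s,s}$ is set up before the $2$-part argument, so that the two facts can be played off each other.
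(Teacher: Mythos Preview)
Your proof is correct and follows essentially the same approach as the paper: both use the identity $\varepsilon_{r,s}^{2}=\varepsilon_{r,r}\varepsilon_{s,s}$ to get $m_\Q\mid 2m_{0,\Q}$, then exploit the oddness of $2i_r-1$ to pin down $\ord_2(m_{0,\Q})=\ord_2(\ell-1)$ and rule out the factor of $2$. The only cosmetic difference is that the paper phrases the $2$-part computation as the congruence $(2i_r-1)m_{0,\Q}\equiv 0\pmod{\ell-1}$ sandwiched in the chain $\ord_2(m_{0,\Q})\le\ord_2(m_\Q)\le\ord_2(\tfrac{e}{2},\ell-1)\le\ord_2(\ell-1)$, whereas you compute the order of $\chi^{2i_r-1}$ directly from surjectivity of $\chi$ on $G_\Q$; the content is identical.
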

\begin{proof}
For any $r$ and $s$, note that $\varepsilon_{r,r} \cdot
\varepsilon_{s,s} = \varepsilon_{r,s}^2$. Thus, $m_\Q \mid 2m_{0,\Q}$. We
certainly have: 
\begin{equation*}
\ord_2(m_{0,\Q}) \leq \ord_2 (m_\Q) \leq \ord_2 \left( \tfrac{e}{2},
  \ell - 1 \right) \leq \ord_2 (\ell - 1).
\end{equation*}
Since $\chi^{(2i_r - 1)m_{0,\Q}} = \varepsilon_{r,r}^{m_{0,\Q}} = 1$, we have
$(2i_r - 1)m_{0,\Q} \equiv 0 \pmod{\ell - 1}$. Since $2i_r - 1$ is odd, it
follows that $\ord_2 m_{0,\Q} \geq \ord_2 (\ell - 1)$, and so all four terms
in the inequality are equal. This implies $m_\Q \mid m_{0,\Q}$, and so
$m_\Q = m_{0,\Q}$. 
\end{proof}

\subsection{The characters $\chi(m)$.}

Let $m$ be an integer dividing $\ell - 1$. We let $\chi(m)\colon G_\Q
\to \F_\ell^\times / \F_\ell^{\times m}$ denote the character $\chi$
modulo $m$-th powers. Then the character $\chi(m_\Q)$ carries essentially the
same information as the homomorphism $\bep$. Indeed, we have just seen
that $\bep$ factors through $\F_\ell^\times$. Since the image is
cyclic of order $m_\Q$, we have an isomorphism $\F_\ell^\times /
\F_\ell^{\times m_\Q} \cong \bep(G_\Q)$, and the following diagram commutes:
\begin{equation*}
\xymatrix @C=1cm {
& \bep(G_\Q) & \\
G_\Q \ar[ur]^{\bep} \ar[r]^\chi \ar@/_4mm/@{-->}[rr]_{\chi(m_\Q)} & \F_\ell^\times
\ar@{->>}[u] \ar@{->>}[r] & \F_\ell^\times / \F_\ell^{\times m_\Q}
\ar@{->>}[lu]^\cong }
\end{equation*}
Let $p \neq \ell$ be a rational prime. Since $\chi(\Frob_p) \equiv p
\pmod{\ell}$, we see that $\bep(\Frob_p)$ is trivial precisely when
$p$ is an $m_\Q$-th power residue modulo $\ell$. 

\subsection{A Technique of Mazur}

In \cite[\S7]{Mazur:1978}, Mazur deduces congruences from the
existence of an isogeny of elliptic curves. Here, we follow the spirit
of Mazur's idea and use it to study the behavior of $\chi(m_\Q)$. Fix
a prime number $\ell$, a number field $K$, and $g > 0$. Suppose $A/K$
is an abelian variety for which \eqref{A1} and \eqref{A2} hold. Let
$p$ be a rational prime, and suppose $\frak{p}$ is a prime of $K$
which divides $p$. We let $q = N_{K/\Q}\frak{p} =
p^{f_{\frak{p}/p}}$. Finally, let $\Frob_\frak{p} \in G_K$ denote a
Frobenius element associated to $\frak{p}$.

\begin{proposition}\label{prop:Mazur_trick}
Suppose $f_{\frak{p}/p}$ is odd, and $q < \frac{\ell}{4g}$. Then
$\chi(m_\Q)(\Frob_\frak{p}) \neq 1$.
\end{proposition}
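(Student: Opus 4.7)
The plan is to argue by contradiction: assume $\chi(m_\Q)(\Frob_{\frak{p}}) = 1$ and deduce that the characteristic polynomial $P_{\frak{p},1}(T)$ of $\Frob_{\frak{p}}$ on $V_\ell(A)$ is forced into a form incompatible with $f_{\frak{p}/p}$ being odd. Via the commutative diagram relating $\chi(m_\Q)$ and $\bep$, the hypothesis is equivalent to $\bep(\Frob_{\frak{p}}) = 1$, which unpacks as $q^{i_r + i_s - 1} \equiv 1 \pmod{\ell}$ for all $1 \leq r, s \leq 2g$. Specializing $s = r$ gives $(q^{i_r})^2 \equiv q$, so each $q^{i_r}$ is one of the two square roots of $q$ in $\F_\ell^\times$; fix one of them and call it $\sqrt{q}$. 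The off-diagonal relation $q^{i_r}q^{i_s} \equiv q$ then forces the two signs to multiply to $+1$, so they all coincide: there is a common $\epsilon \in \{\pm 1\}$ with $q^{i_r} \equiv \epsilon\sqrt{q} \pmod{\ell}$ for every $r$.

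Next I would pin down the Frobenius eigenvalues $\alpha_r$ themselves, not merely their reductions. By Lemma \ref{structure_lemma} together with good reduction of $A$ at $\frak{p}$, the reduction of $P_{\frak{p},1}(T)$ modulo $\ell$ equals $\prod(T - q^{i_r})$. Then $\sum \alpha_r^2 \equiv 2gq \pmod{\ell}$, while the Weil bound $|\alpha_r| = \sqrt{q}$ gives $|\sum \alpha_r^2| \leq 2gq$. Hence $|\sum \alpha_r^2 - 2gq| \leq 4gq$, which is strictly less than $\ell$ by the hypothesis $q < \ell/(4g)$, so $\sum \alpha_r^2 = 2gq$ exactly. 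Combined with $|\alpha_r^2| = q$, equality in the triangle inequality forces every $\alpha_r^2$ to equal the positive real number $q$, so each $\alpha_r$ equals $\pm\sqrt{q}$ as an element of $\mathbb{R}$.

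For the sign-matching step, write $P_{\frak{p},1}(T) = (T - \sqrt{q})^a (T + \sqrt{q})^b$ with $a + b = 2g$. Reducing via the fixed embedding $\bar{\Q} \hookrightarrow \bar{\Q}_\ell$, this becomes $(T - \sqrt{q})^a (T + \sqrt{q})^b$ in $\F_\ell[T]$ (with $\sqrt{q}$ now the chosen square root in $\F_\ell$); but by the first paragraph this reduction also equals $(T - \epsilon\sqrt{q})^{2g}$. Since $\ell > 2$ the factors $T - \sqrt{q}$ and $T + \sqrt{q}$ are coprime in $\F_\ell[T]$, so unique factorization forces $(a,b) \in \{(2g,0),(0,2g)\}$ according to $\epsilon$. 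Hence $P_{\frak{p},1}(T) = (T \mp \sqrt{q})^{2g}$ as an element of $\Z[T]$.

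The coefficient of $T^{2g-1}$ in this polynomial is $\mp 2g\sqrt{q}$, which lies in $\Z$ only when $\sqrt{q} \in \Q$, i.e., only when $q$ is a perfect square. Since $q = p^{f_{\frak{p}/p}}$ with $f_{\frak{p}/p}$ odd, $q$ is not a perfect square, yielding the contradiction. The decisive step, and the one requiring the most care, is the sign-alignment in the first paragraph: this is precisely what the full homomorphism $\bep$ delivers beyond its diagonal piece $\bep_0$. Without the off-diagonal relations the $\alpha_r$ could mix signs, producing the harmless polynomial $(T^2 - q)^g \in \Z[T]$ and no contradiction for even $g$.
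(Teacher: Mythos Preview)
Your proof is correct and shares the paper's opening move: assume $\bep(\Frob_{\frak{p}})=1$, compute $a_{\frak{p},2}\equiv 2gq\pmod{\ell}$, and use the Weil bound together with $q<\ell/(4g)$ to force $a_{\frak{p},2}=2gq$ exactly, hence $\alpha_r^2=q$ for every $r$. From this point the two arguments diverge.

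The paper first uses integrality: $a_{\frak{p},1}=(s^+-s^-)\sqrt{q}\in\Z$ with $f_{\frak{p}/p}$ odd forces $s^+=s^-$ and $a_{\frak{p},1}=0$. It then \emph{squares} the trace congruence, obtaining
\[
0=a_{\frak{p},1}^2\equiv\sum_{r,s}\varepsilon_{r,s}(\Frob_{\frak{p}})\cdot\chi(\Frob_{\frak{p}})\equiv 4g^2q\pmod{\ell},
\]
which contradicts $4gq<\ell$. Thus the paper deploys the off-diagonal relations $\varepsilon_{r,s}=1$ only at the very end, inside this squared-trace identity. You instead use the off-diagonal relations up front to show that all $q^{i_r}$ coincide with a \emph{single} square root $\epsilon\sqrt{q}$ in $\F_\ell$, then match the mod-$\ell$ factorization $(T-\epsilon\sqrt{q})^{2g}$ against the characteristic-zero factorization $(T-\sqrt{q})^a(T+\sqrt{q})^b$ to force $\{a,b\}=\{0,2g\}$; the irrational linear coefficient $\mp 2g\sqrt{q}$ then finishes the job directly. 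Your route sidesteps the second trace computation but requires fixing and tracking a reduction $\bar{\Q}\to\bar{\F}_\ell$; the paper's route is marginally more elementary in that it never leaves traces and congruences of integers.
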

\begin{proof}
  For the sake of contradiction, suppose $\chi(m_\Q)(\Frob_\frak{p}) =
  1$; in particular, this forces $\varepsilon_{r,s}(\Frob_\frak{p}) =
  1$ for all $r$, $s$. Let $\{ \alpha_{\frak{p},r} \}_{r=1}^{2g}$ be
  the eigenvalues for $\Frob_\frak{p}$. Note that $\{
  \alpha_{\frak{p},r}^n \}$ are the eigenvalues for
  $\Frob_\frak{p}^n$. If we let $a_{\frak{p},n}
  (=\sum_{r=1}^{2g} \alpha^n_{\frak{p},r}) \in \Z$ denote the trace of
  $\Frob_\frak{p}^n$, we have:
\begin{equation*}
\begin{split}
a_{\frak{p},2} & = \tr (\Frob_\frak{p}^2) \equiv \sum_{r=1}^{2g}
\chi^{i_r}(\Frob_\frak{p}^2) = \sum_{r=1}^{2g} \chi^{2i_r -
  1}(\Frob_\frak{p}) \cdot \chi(\Frob_\frak{p}) \\ 
& = \sum_{r=1}^{2g} \varepsilon_{r,r}(\Frob_\frak{p}) \cdot
\chi(\Frob_\frak{p}) = 2g \chi(\Frob_\frak{p}) \equiv 2gq \pmod{\ell}.
\end{split}
\end{equation*}
On the other hand, from the Weil conjectures, we know the
$\alpha_{\frak{p},r}$ are $q$-Weil numbers, and so the trace
$a_{\frak{p},2}$ is a rational integer satisfying $|a_{\frak{p},2}|
\leq 2gq$. As $q < \frac{\ell}{4g}$, we must have $a_{\frak{p},2} =
2gq$. This forces $\alpha_{\frak{p},r}^2 = q$ for all $r$ (any other
choice of eigenvalues gives $a_{\frak{p},2} < 2gq$). Consequently:
\begin{equation*}
\alpha_{\frak{p},r} = \pm q^{1/2} = \pm p^{f_{\frak{p}/p}/2}.
\end{equation*}
Let $s^+$ and $s^-$ denote, respectively, the number of indices $r$
for which $\alpha_{\frak{p},r}$ is $+q^{1/2}$ or $-q^{1/2}$. Then the
rational integer $a_{\frak{p},1}$ satisfies
\begin{equation*}
a_{\frak{p},1} = (s^+ - s^-)p^{f_{\frak{p}/p}/2},
\end{equation*}
which is only possible (since $f_{\frak{p}/p}$ is odd) if $s^+ = s^-$
and $a_{\frak{p},1} = 0$. We now have:
\begin{equation*}
\begin{split}
0 = a_{\frak{p},1}^2 & = \left( \tr \Frob_\frak{p} \right)^2 \equiv
\left( \sum_{r=1}^{2g} \chi^{i_r} (\Frob_\frak{p}) \right)^2 
= \sum_{r=1}^{2g} \sum_{s=1}^{2g} \chi^{i_r + i_s}(\Frob_\frak{p}) \\  
& = \sum_{r,s} \varepsilon_{r,s}(\Frob_\frak{p}) \cdot
\chi(\Frob_\frak{p}) \equiv 4g^2q \pmod{\ell}.
\end{split}
\end{equation*}
Consequently, $\ell \mid 4g^2q$, or what is the same, $\ell \mid
4gq$. Clearly this contradicts $q < \frac{\ell}{4g}$, and so it must 
be that $\chi(m_\Q)(\Frob_\frak{p}) \neq 1$, as claimed.
\end{proof}
\begin{corollary}\label{cor:Mazur-over-Q}
In case $K = \Q$ we have $\chi(m_\Q)(\Frob_p) \neq 1$ for all $p <
\frac{\ell}{4g}$. 
\end{corollary}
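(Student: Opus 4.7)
The plan is to obtain this corollary as an immediate specialization of Proposition \ref{prop:Mazur_trick} to the ground field $K=\Q$. When $K=\Q$, for any rational prime $p$ the unique prime $\frak{p}$ of $K$ lying above $p$ is $p$ itself, so the residue-field degree is $f_{\frak{p}/p}=1$, which is odd, and the norm is $q = N_{K/\Q}\frak{p} = p$. Moreover $\Frob_\frak{p} = \Frob_p$ under the identification of $G_K$ with $G_\Q$.

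Thus, given any rational prime $p < \frac{\ell}{4g}$, the two hypotheses of Proposition \ref{prop:Mazur_trick} — namely that $f_{\frak{p}/p}$ is odd and that $q < \frac{\ell}{4g}$ — are automatically satisfied. Applying the proposition then yields $\chi(m_\Q)(\Frob_\frak{p}) = \chi(m_\Q)(\Frob_p) \neq 1$, which is exactly the claim.

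There is essentially no obstacle here: the corollary is a direct translation of Proposition \ref{prop:Mazur_trick} to the degenerate situation where $K=\Q$ forces the residue degree to equal $1$ at every rational prime. The only thing to remark on is that the standing hypotheses \eqref{A1} and \eqref{A2} invoked in Proposition \ref{prop:Mazur_trick} are in force throughout this section, so they require no re-verification. Hence the proof can be given in one or two lines.
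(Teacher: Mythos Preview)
Your proposal is correct and matches the paper's approach exactly: the corollary is stated without proof in the paper, being an immediate specialization of Proposition \ref{prop:Mazur_trick} with $K=\Q$, where $\frak{p}=p$, $f_{\frak{p}/p}=1$, and $q=p$.
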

Moreover, also in the special case $K = \Q$, we have the following
result. For any positive $\varepsilon < \frac{1}{12}$, set
\begin{equation*}
C_8 = C_8(g, \varepsilon) := \max \left\{C_7(g,1),
  C_1(2,g,\varepsilon), C_1(4, g, \varepsilon),
  (4gC_1'(2,\varepsilon)^3)^{4/(1-12\varepsilon)} \right\}.
\end{equation*}
\begin{proposition}\label{prop:no_small_m}
Suppose $0 < \varepsilon < \frac{1}{12}$ and $\ell > C_8$. If $[A] \in
\mathscr{A}(\Q,g,\ell)$, then $m_\Q > 6$.
\end{proposition}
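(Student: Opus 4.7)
The plan is to rule out, in turn, each of the remaining even candidates $m_\Q\in\{2,4,6\}$; Lemma \ref{lemma:mQ} gives $2\mid m_\Q$ and $m_\Q\mid\ell-1$, so this list is exhaustive among values $\leq 6$. In every case I look for a prime $p<\ell/(4g)$ that is an $m_\Q$-th power residue modulo $\ell$, since Corollary \ref{cor:Mazur-over-Q} then supplies the contradiction.

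Cases $m_\Q=2$ and $m_\Q=4$ are handled immediately by Corollary \ref{cor:Elliott-corollary} with $m=2$ and $m=4$, respectively: the thresholds $C_1(2,g,\varepsilon)$ and $C_1(4,g,\varepsilon)$ are built into $C_8$.

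The main difficulty is $m_\Q=6$, since Elliott's bound $\ell^{(m-1)/4+\varepsilon}$ is vacuous for $m=6$. I would first apply Proposition \ref{prop:Elliott} with $m=2$ to produce a prime $p_0<C_1'(2,\varepsilon)\ell^{1/4+\varepsilon}$ that is a QR modulo $\ell$; the fourth entry $(4gC_1'(2,\varepsilon)^3)^{4/(1-12\varepsilon)}$ of $C_8$ is designed exactly so that $\ell>C_8$ gives $p_0^3<\ell/(4g)$. Because $p_0$ is a QR and $m_\Q=6$, the class of $p_0$ in $\F_\ell^\times/\F_\ell^{\times 6}$ lies in the order-$3$ subgroup $\F_\ell^{\times 2}/\F_\ell^{\times 6}$; Corollary \ref{cor:Mazur-over-Q} says this class is non-trivial, so it has order exactly $3$. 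Translating through $\bep(G_\Q)\cong\F_\ell^\times/\F_\ell^{\times m_\Q}$, this yields $\bep(\Frob_{p_0})^3=1$ and thus $p_0^{3(2i_r-1)}\equiv 1\pmod\ell$ for every $r$. Running the Mazur-type computation of Proposition \ref{prop:Mazur_trick} on $a_{p_0,6}$ (rather than $a_{p_0,2}$) then yields $a_{p_0,6}\equiv 2gp_0^3\pmod\ell$; the Weil bound $|a_{p_0,6}|\le 2gp_0^3$ together with $p_0^3<\ell/(4g)$ forces equality, which in turn (by the triangle-inequality step) forces $\alpha_r^6=p_0^3$ for every Frobenius eigenvalue, i.e.\ $\alpha_r=\sqrt{p_0}\,\zeta_6^{k_r}$ for some $k_r\in\Z/6$. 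Using that $\sqrt{p_0}$ is irrational while $a_{p_0,1},a_{p_0,3}\in\Z$ and the multiset $\{\alpha_r\}$ is stable under complex conjugation, a short symmetric-function calculation pins down the multiplicities to $(n_0,n_1,n_2,n_3,n_4,n_5)=(a,b,b,a,b,b)$ with $a+2b=g$.

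The step I expect to be the crux is converting this structural data into a contradiction. Choose a prime of $\bar\Q$ above $\ell$ and set $u_r:=\alpha_r\bmod\ell=v\zeta_6^{k_r}$, where $v\in\F_\ell^\times$ is a fixed square root of $p_0$ (available because $p_0$ is a QR, and $\zeta_6\in\F_\ell^\times$ since $6\mid\ell-1$). A direct calculation gives
\[
\varepsilon_{r,s}(\Frob_{p_0})=u_ru_s/p_0=\zeta_6^{k_r+k_s}.
\]
Since $\bep(\Frob_{p_0})$ has order $3$ in $\bep(G_\Q)$, every such component must have order dividing $3$ in $\F_\ell^\times$, forcing $k_r+k_s$ to be even for all $r,s$ and hence all $k_r$ to share a common parity. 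Either parity choice, checked against the pattern $(a,b,b,a,b,b)$, collapses to $a=b=0$ and so $g=0$, contradicting $g>0$. This rules out $m_\Q=6$ and completes the argument.
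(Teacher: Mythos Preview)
Your argument is correct and tracks the paper's proof closely through the elimination of $m_\Q\in\{2,4\}$ and, in the $m_\Q=6$ case, through the computation $a_{p_0,6}\equiv 2gp_0^3\pmod\ell$, the resulting eigenvalue form $\alpha_r=\sqrt{p_0}\,\zeta_6^{k_r}$, and the multiplicity pattern $(a,b,b,a,b,b)$ with $a+2b=g$. (The paper obtains this pattern by invoking stability under the full Galois group $\Gal(\Q(\zeta_6,\sqrt{p_0})/\Q)=\langle\sigma,\tau\rangle$, whereas you use only complex conjugation together with integrality of $a_{p_0,1}$ and $a_{p_0,3}$; both routes yield the same pattern.)

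The endgames genuinely diverge. The paper reads off directly from the pattern that $a_{p_0,j}=0$ for every odd $j$, in particular $a_{p_0,3}=0$, and then repeats the squaring trick of Proposition~\ref{prop:Mazur_trick}:
\[
0=a_{p_0,3}^2\equiv\sum_{r,s}\varepsilon_{r,s}(\Frob_{p_0})^3\,\chi(\Frob_{p_0})^3\equiv 4g^2p_0^3\pmod\ell,
\]
which contradicts $p_0^3<\ell/(4g)$. Your alternative, reducing the $\alpha_r$ modulo a prime above~$\ell$ and using $\varepsilon_{r,s}(\Frob_{p_0})=\zeta_6^{k_r+k_s}\in\bmu_3$ to force a common parity among the $k_r$, is valid (one should note the reindexing needed so that $u_r=p_0^{i_r}$, which is harmless since only the multiset of $k_r$'s matters). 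The paper's finish is shorter and stays entirely within the Mazur-trace framework already set up, avoiding the auxiliary choice of a prime above $\ell$; your finish is more structural but requires slightly more bookkeeping.
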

\begin{proof}
  As $\ell > C_7(g,1)$, \eqref{A2} holds and $m_\Q$ must be even. If
  $m_\Q \leq 4$, then by Corollary \ref{cor:Elliott-corollary}, there
  is a prime $p < \frac{\ell}{4g}$ which is an $m_\Q$-th power residue
  modulo $\ell$. Thus, $\chi(m_\Q)(\Frob_p) = 1$, which contradicts
  the previous result. It only remains to eliminate the possibility
  $m_\Q = 6$. We argue by contradiction. Suppose $m_\Q = 6$, so that
  $6 \mid (\ell - 1)$. By Proposition \ref{prop:Elliott}, we know
  there exists $p < C_1'(2,\varepsilon) \cdot \ell^{1/4 +
    \varepsilon}$ such that $\chi(2)(\Frob_p) = 1$. As $\ell >
  (4gC_1'(2,\varepsilon)^3)^{4/(1-12\varepsilon)}$, we have
\begin{equation*}
p^3 < C_1'(2,\varepsilon)^3 \cdot \ell^{3/4 + 3\varepsilon} < \frac{\ell}{4g}.
\end{equation*}
A priori, the characters $\varepsilon_{r,s}$ always take values in
$\bmu_6$. However, as $p$ is a square modulo $\ell$, we must have
$\varepsilon_{r,s}(\Frob_p) \in \bmu_3$. Hence
\begin{equation*}
\begin{split}
a_{p,6} & = \tr \left( \Frob_p^6 \right) \equiv \sum_{r=1}^{2g}
\chi^{i_r}(\Frob_p^6) = \sum_{r=1}^{2g} \chi^{2i_r}(\Frob_p^3) \\ & =
\sum_{r=1}^{2g} \varepsilon_{r,r}(\Frob_p)^3 \chi(\Frob_p)^3 \equiv
2gp^3 \pmod{\ell}. 
\end{split}
\end{equation*}
From the Weil conjectures, however, we have $|a_{p,6}| \leq 2gp^3$,
and so we must have $a_{p,6} = 2gp^3$. Consequently:
\begin{equation*}
2gp^3 = \alpha_{p,1}^6 + \cdots + \alpha_{p,2g}^6.
\end{equation*}
As each $\alpha_{p,r}$ has absolute value $p^{1/2}$, we must have
$\alpha_{p,r}^6 = p^3$, hence $\alpha_{p,r} = \eta^{t_r} p^{1/2}$,
where $0 \leq t_r \leq 5$ and $\eta$ is a primitive sixth root of
unity. Thus, $\alpha_{p,r} \in \Q(\eta, \sqrt{p})$. For each $0 \leq t
\leq 5$, set $\kappa_t := \#\{r : \alpha_{p,r} = \eta^t p^{1/2}
\}$. The group $\Gal(\Q(\eta,\sqrt{p})/\Q) = \langle \sigma, \tau
\rangle$, where
\begin{equation*}
\sigma \colon \left\{ \begin{array}{rcc} \sqrt{p} & \mapsto & -\sqrt{p}
    \\ \eta & \mapsto & \eta \end{array} \right., \qquad \tau \colon
\left\{ \begin{array}{rcc} \sqrt{p} & \mapsto & \sqrt{p} \\ \eta &
    \mapsto & \eta^{-1} \end{array} \right. .
\end{equation*}
As a set (possibly with multiplicity),
$\{\alpha_{p,1},\dots,\alpha_{p,2g} \}$ is Galois stable, which yields
$\kappa_0 = \kappa_3$ and $\kappa_1 = \kappa_2 = \kappa_4 =
\kappa_5$. Moreover, 
\begin{equation*}
a_{p,j} = p^{j/2} \left( (1 + (-1)^j)\kappa_0 + (\eta^j + \eta^{2j} +
  \eta^{4j} + \eta^{5j})\kappa_1 \right),
\end{equation*}
which vanishes if $j$ is odd. So $a_{p,3} = 0$. Consequently,
\begin{equation*}
\begin{split}
0 = a_{p,3}^2 & = \bigl(\tr (\Frob_p^3) \bigr)^2 \equiv \left(
  \sum_{r=1}^{2g} \chi^{i_r}(\Frob_p^3) \right)^2 =
\sum_{r=1}^{2g} \sum_{s=1}^{2g} \chi^{i_r}(\Frob_p^3)
\chi^{i_s}(\Frob_p^3) \\
& = \sum_{r=1}^{2g} \sum_{s=1}^{2g} \varepsilon_{r,s}(\Frob_p)^3
\chi(\Frob_p)^3 = 4g^2\chi(\Frob_p)^3 \equiv 4g^2p^3 \pmod{\ell}.
\end{split}
\end{equation*}
As $\ell$ is prime, this implies $\ell \mid 2gp$. However, since $p <
p^3 < \frac{\ell}{4g}$, this is impossible; thus, $m_\Q \neq 6$.
\end{proof}

\section{Conditional Results}

In this section, we provide two proofs of the finiteness conjecture
(Conjecture \ref{conj:AKg-finite}) under the assumption of the
Generalized Riemann Hypothesis. The first proof is completely general,
in that it demonstrates the finiteness of $\mathscr{A}(K,g)$ for any
$K/\Q$. The second result is weaker, because we must add the
assumption that $n_K$ is odd. However, it is a finiteness result which
is uniform in the degree $n_K$; that is, we demonstrate the existence
of one bound $L$, dependent only on $g$ and $n_K$, but not $K$ itself,
for which $\ell > L$ implies $\mathscr{A}(K,g,\ell) = \varnothing$.

\subsection{Finiteness via Effective Chebotarev}

\begin{theorem}
  Let $K$ be a number field, and let $g > 0$. For all $\ell \gg
  0$, assume the Generalized Riemann Hypothesis holds for the Dedekind
  zeta functions of number fields of the form $L\tilde{K}$, where $L$
  is a subfield of $\Q(\bmu_\ell)$. Then $\mathscr{A}(K,g)$ is finite.
\end{theorem}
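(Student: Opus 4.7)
The plan is to combine the structural constraints on $\mathscr{A}(K,g,\ell)$ from \S3--\S4 with the conditional Chebotarev bound of Proposition~\ref{prop:GRH-Cheb-bound}. Suppose $[A]\in\mathscr{A}(K,g,\ell)$ and $\ell>C_7(g,n_K)$, so that both \eqref{A1} and \eqref{A2} hold and all of the structural results of \S3--\S4 apply to $A$. By Lemma~\ref{lem:e-properties}(a) the invariant $e$ satisfies $e\mid M'(2g)\,n_K$, and by the discussion preceding Lemma~\ref{lemma:mQ} the invariant $m_\Q$ divides $e/2$. Hence $m_\Q$ lies in the finite set $\mathcal{M}(g,n_K)$ of positive divisors of $M'(2g)\,n_K/2$, a set depending only on $g$ and $n_K$---not on $\ell$ or on $A$.

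Next I would apply Proposition~\ref{prop:GRH-Cheb-bound} with $n=1$, letting $m$ range over the finitely many values in $\mathcal{M}(g,n_K)$, and set
\begin{equation*}
C:=\max\bigl\{C_6(m,g,1,K):m\in\mathcal{M}(g,n_K)\bigr\},
\end{equation*}
which is independent of $\ell$. For any $\ell>\max\{C,C_7(g,n_K)\}$ and any $[A]\in\mathscr{A}(K,g,\ell)$, the theorem's hypothesis supplies GRH for the Dedekind zeta function of $\Q(\bmu_\ell)_{m_\Q}\tilde{K}$, since $\Q(\bmu_\ell)_{m_\Q}\subseteq\Q(\bmu_\ell)$. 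The ``consequently'' clause of Proposition~\ref{prop:GRH-Cheb-bound}---obtained by choosing a Frobenius class in $\Gal(\Q(\bmu_\ell)_{m_\Q}\tilde{K}/K)$ that is trivial on $\Q(\bmu_\ell)_{m_\Q}$---then produces a rational prime $p<\ell/(4g)$ and a prime $\frak{p}\mid p$ of $K$ with $f_{\frak{p}/p}=1$, such that $p$ is an $m_\Q$-th power residue modulo $\ell$.

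The final step is a clean contradiction via Mazur's technique of \S4.3. Since $f_{\frak{p}/p}=1$ is odd and $q=N_{K/\Q}\frak{p}=p<\ell/(4g)$, Proposition~\ref{prop:Mazur_trick} forces $\chi(m_\Q)(\Frob_\frak{p})\neq 1$ in $\F_\ell^\times/\F_\ell^{\times m_\Q}$. But the condition $f_{\frak{p}/p}=1$ also gives $\chi(\Frob_\frak{p})\equiv p\pmod{\ell}$, and $p$ being an $m_\Q$-th power residue modulo $\ell$ means $\chi(m_\Q)(\Frob_\frak{p})=1$---a contradiction. Hence $\mathscr{A}(K,g,\ell)=\varnothing$ for all $\ell>\max\{C,C_7(g,n_K)\}$. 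Combined with Faltings' theorem (finiteness of $\mathscr{A}(K,g,\ell)$ for each fixed $\ell$), this proves $\mathscr{A}(K,g)$ is finite.

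The only subtle point is the opening reduction: one must observe that $m_\Q$ is bounded purely in terms of $g$ and $n_K$, independently of $\ell$ and $A$, so that a single uniform constant $C$ suffices for all the invocations of Proposition~\ref{prop:GRH-Cheb-bound}. Once this is in hand, the argument becomes a clean pairing of the GRH--Chebotarev output (a small prime $p$ whose Frobenius is killed by $\chi(m_\Q)$) against the Mazur trick (which forbids exactly such a prime from existing with $q<\ell/(4g)$ and $f_{\frak{p}/p}$ odd).
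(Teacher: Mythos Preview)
Your proof is correct and follows essentially the same approach as the paper's own argument: bound $m_\Q$ by divisors of $\tfrac{1}{2}M'(2g)n_K$ via Lemma~\ref{lem:e-properties} and \S4.1, take $\ell$ large enough to invoke Proposition~\ref{prop:GRH-Cheb-bound} with $\sigma=1$ to produce a prime $\frak{p}$ with $f_{\frak{p}/p}=1$ and $\chi(m_\Q)(\Frob_\frak{p})=1$, and then contradict Proposition~\ref{prop:Mazur_trick}. Your write-up is in fact slightly more explicit than the paper's in noting that Faltings' theorem is needed to pass from $\mathscr{A}(K,g,\ell)=\varnothing$ for $\ell\gg 0$ to the finiteness of $\mathscr{A}(K,g)$.
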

\begin{remark}
  In fact, we need only assume the Generalized Riemann Hypothesis for
  the Dedekind zeta functions of $L \tilde{K}$, where $L =
  \Q(\bmu_\ell)_m$ and $m \mid (M'(2g)n_K, \ell - 1)$. 
\end{remark}
\begin{proof}
We show $\mathscr{A}(K,g,\ell)$ is non-empty for only finitely many
$\ell$. First, let us define:
\begin{equation*}
C_9(m, g, K) := \max \{ C_6(m, g, 1, K) : m \mid \frac{1}{2}M'(2g)n_K \}
\end{equation*}
Let $\ell$ be a prime number with
\begin{equation*}
\ell > \max \{C_7(g,n_K), C_9(m, g, K)\}.
\end{equation*}
We claim $\mathscr{A}(K,g,\ell) = \varnothing$. If not, then there
exists an abelian variety $A/K$ with $[A] \in
\mathscr{A}(K,g,\ell)$. Then \eqref{A2} holds, and we define the
quantities $e$ and $m_\Q$ associated to $A$ as in \S3, \S4,
respectively. By Lemma \ref{lem:e-properties} and the observation
$m_\Q \mid \frac{e}{2}$ (\S4.1), we have $\ell > C_6(m_\Q, g, 1, K)$,
so we may apply Proposition \ref{prop:GRH-Cheb-bound} (with $L_0 =
\Q(\bmu_\ell)_{m_\Q}$ and $\sigma = 1$). Thus, there exist a rational
prime $p < \frac{\ell}{4g}$ and a prime $\frak{p} \mid p$ in $K$, for
which $f_{\frak{p}/p} = 1$, and for which $\left[
  \frac{L_0\tilde{K}/K}{\frak{p}} \right] = \{1 \}$. As $\chi(m_\Q)
\bigr|_{G_K}$ factors through $G_K \twoheadrightarrow
\Gal(L_0\tilde{K}/K)$, it follows that $\chi(m_\Q)(\Frob_\frak{p}) =
1$. On the other hand, by Proposition \ref{prop:Mazur_trick}, we
know $\chi(m_\Q)(\Frob_\frak{p}) \neq 1$, a contradiction.
\end{proof}

\subsection{A Uniform Version}

Let $F$ be a field and $n > 0$ an integer. Define the following
collection of extensions of $F$:
\begin{equation*}
\mathscr{F}(F,n) := \{ K : F \subset K, [K:F] = n \}.
\end{equation*}
\begin{conjecture}[Uniform Version]\label{conj:uniform-version}
Let $g > 0$ and $n > 0$. Then there exists a bound $N = N(g,n) >
0$ such that $\mathscr{A}(K,g,\ell) = \varnothing$ for any $K \in
\mathscr{F}(\Q,n)$ and any prime $\ell > N$. 
\end{conjecture}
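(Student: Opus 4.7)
The plan is to prove the conjecture under GRH for Dedekind zeta functions of cyclotomic fields together with the added assumption that $n$ is odd, mirroring the situation described in the introduction. For a hypothetical $[A] \in \mathscr{A}(K,g,\ell)$ with $\ell$ large (in terms of $g$ and $n$ only), the goal is to locate a single prime $\frak{p}$ of $K$ at which Proposition \ref{prop:Mazur_trick} forces $\chi(m_\Q)(\Frob_\frak{p}) \neq 1$ while the effective Chebotarev theorem forces $\chi(m_\Q)(\Frob_\frak{p}) = 1$, producing the desired contradiction.

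First, for $\ell > C_7(g,n)$ the hypothesis \eqref{A2} is automatic by \eqref{eq:A2Bound}, and the machinery of \S3 applies. Lemma \ref{lem:e-properties}(a) gives $e \mid M'(2g)\,n$, and since $m_\Q \mid \tfrac{e}{2}$, the integer $m_\Q$ lies in a fixed finite set depending only on $g$ and $n$; call its maximum $M = M(g,n)$. This uniform control of $m_\Q$, independent of $K$, is the crucial ingredient that distinguishes the uniform version from the general conditional finiteness result of \S5.1.

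Next, rather than invoking Proposition \ref{prop:GRH-Cheb-bound}, whose constant $C_6$ depends on $\Delta_{\tilde K}$ and hence on $K$ itself, I would apply Theorem \ref{thm:eff-Cheb} directly to $L_0 := \Q(\bmu_\ell)_{m_\Q}$ over $\Q$ with $\sigma = 1$. By Lemma \ref{lemma:mQ} we have $m_\Q \geq 2$, so $L_0 \neq \Q$, and GRH for the cyclotomic field $L_0$ yields a rational prime $p$ which is an $m_\Q$-th power residue modulo $\ell$ and satisfies $p \leq C_3 \bigl((M-1)\log\ell\bigr)^2$. For any prime $\frak{p}$ of $K$ above $p$, the Frobenius $\Frob_\frak{p}$ acts on $L_0 \subseteq \Q(\bmu_\ell)$ as raising to the $p^{f_{\frak{p}/p}}$ power, which remains trivial modulo $m_\Q$-th powers, so $\chi(m_\Q)(\Frob_\frak{p}) = 1$.

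Finally, the oddness of $n$ guarantees that every $f_{\frak{p}/p} \mid n$ is odd, and the crude estimate $q = p^{f_{\frak{p}/p}} \leq p^n = O_{g,n}\bigl((\log\ell)^{2n}\bigr)$ is less than $\ell/(4g)$ once $\ell$ exceeds a bound $N(g,n)$. Both hypotheses of Proposition \ref{prop:Mazur_trick} are thus satisfied, giving $\chi(m_\Q)(\Frob_\frak{p}) \neq 1$, which contradicts the preceding paragraph. The main obstacle — and the reason the odd-degree restriction is essentially forced on this approach — is that when $n$ is even, a prime $\frak{p}$ of $K$ may have even residue degree, in which case the proof of Proposition \ref{prop:Mazur_trick} collapses (the identity $a_{\frak{p},1} = (s^+ - s^-)p^{f_{\frak{p}/p}/2}$ no longer forces $a_{\frak{p},1} = 0$). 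Removing the odd-degree hypothesis would require either a strengthening of the Mazur-type congruence that handles even $f_{\frak{p}/p}$, or a fundamentally different mechanism for producing a prime $\frak{p}$ of $K$ of odd residue degree and small norm, and it is here that any attack on the full conjecture would need to begin.
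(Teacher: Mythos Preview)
Your approach is essentially the paper's own argument, specialized to the base field $F=\Q$: bound $m_\Q$ uniformly via Lemma~\ref{lem:e-properties}(a), apply the effective Chebotarev theorem to the degree-$m_\Q$ subfield of $\Q(\bmu_\ell)$ to produce a small rational prime $p$ that is an $m_\Q$-th power residue, then lift to a prime $\frak{p}$ of $K$ of odd residue degree and invoke Proposition~\ref{prop:Mazur_trick}. (Your concern that Proposition~\ref{prop:GRH-Cheb-bound} depends on $K$ is misplaced---the paper applies it with the \emph{base} field $F$, not with $K$; for $F=\Q$ this is exactly your direct appeal to Theorem~\ref{thm:eff-Cheb}.)

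One correction is needed in your third paragraph: the claim that ``every $f_{\frak{p}/p}\mid n$ is odd'' is false on both counts when $K/\Q$ is not Galois---residue degrees need not divide $n$, and some may be even. What you actually need (and what the paper uses) is only that \emph{some} $\frak{p}\mid p$ has odd $f_{\frak{p}/p}$; this follows because $\sum_{\frak{p}\mid p} e_{\frak{p}/p}\,f_{\frak{p}/p}=n$ is odd, so at least one summand is odd. With that fix, your argument goes through.
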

We remark that the uniform version for $n=1$ is exactly equivalent to
the original finiteness conjecture for $\mathscr{A}(\Q,g)$. Thus, when
considering the uniform version, we may assume $n > 1$. In this
section, we prove the following version of the uniform conjecture:
\begin{theorem}\label{thm:GRH-uniform}
Assume the Generalized Riemann Hypothesis. Then Conjecture
\ref{conj:uniform-version} holds for any $g$ and any odd $n$.
\end{theorem}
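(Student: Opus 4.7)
The plan is to adapt the proof of the previous theorem by shifting the base field in Proposition \ref{prop:GRH-Cheb-bound} from $K$ to $\Q$, which removes the dependence of the Chebotarev bound on $\Delta_{\tilde{K}}$ (the source of non-uniformity in the first proof). The price is that one produces a small rational prime rather than a prime of $K$, and the odd-degree hypothesis on $n$ is used to recover sufficient control over residue degrees so that Proposition \ref{prop:Mazur_trick} still applies.

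Concretely, I would set
\[
N(g,n) := \max\Bigl\{C_7(g,n),\ \max_{m \mid \frac{1}{2}M'(2g)n} C_6(m, g, n, \Q) \Bigr\},
\]
which depends only on $g$ and $n$. Fix $\ell > N(g,n)$, $K \in \mathscr{F}(\Q, n)$, and suppose for contradiction that $[A] \in \mathscr{A}(K, g, \ell)$. Then \eqref{A2} holds, the integer $m_\Q$ is defined, and by Lemma \ref{lem:e-properties}(a) together with the divisibility $m_\Q \mid e/2$ observed in \S4.1, $m_\Q$ divides $\frac{1}{2} M'(2g) n$. Apply Proposition \ref{prop:GRH-Cheb-bound} with base field $\Q$, degree parameter $n$, $m = m_\Q$, and $\sigma = 1$: this produces a rational prime $p < (\ell/(4g))^{1/n}$ that is an $m_\Q$-th power residue modulo $\ell$. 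The GRH hypotheses needed are for Dedekind zeta functions of subfields of $\Q(\bmu_\ell)$, which are subsumed in the full GRH.

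Now invoke the parity hypothesis. Since $n = \sum_{\frak{p} \mid p} e_{\frak{p}/p} f_{\frak{p}/p}$ is odd, some prime $\frak{p}$ of $K$ above $p$ must have odd residue degree $f_{\frak{p}/p}$. For this $\frak{p}$, $q := N_{K/\Q}\frak{p} = p^{f_{\frak{p}/p}} \leq p^n < \ell/(4g)$, so Proposition \ref{prop:Mazur_trick} gives $\chi(m_\Q)(\Frob_\frak{p}) \neq 1$. On the other hand $\chi(\Frob_\frak{p}) \equiv p^{f_{\frak{p}/p}} \pmod{\ell}$ is itself an $m_\Q$-th power residue modulo $\ell$ (being an integer power of the $m_\Q$-th power residue $p$), whence $\chi(m_\Q)(\Frob_\frak{p}) = 1$. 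The contradiction forces $\mathscr{A}(K,g,\ell) = \varnothing$.

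The main obstacle is exactly the ingredient which compels the odd-$n$ hypothesis: the parity argument producing a prime of $K$ of odd residue degree over the rational prime $p$ supplied by effective Chebotarev. When $n$ is even it can happen that every $\frak{p} \mid p$ has even $f_{\frak{p}/p}$, in which case the hypothesis of Proposition \ref{prop:Mazur_trick} fails at every prime above $p$ and this strategy is inconclusive. Removing that restriction would seem to require either a sharper Chebotarev-type existence theorem producing primes of $K$ with prescribed residue degree uniformly in $K$, or a strengthening of the Mazur-style congruence argument that is insensitive to residue-degree parity.
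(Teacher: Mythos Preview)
Your proof is correct and is essentially the paper's own argument specialized to $F=\Q$: the paper in fact proves the more general statement for $K\in\mathscr{F}(F,n)$ with $n$ odd and deduces Theorem~\ref{thm:GRH-uniform} as the case $F=\Q$, using exactly your definition of $N$, the same application of Proposition~\ref{prop:GRH-Cheb-bound} over the base field, the same parity argument to find $\frak{p}\mid p$ with odd $f_{\frak{p}/p}$, and the same contradiction via Proposition~\ref{prop:Mazur_trick}. Your closing remarks on why even $n$ obstructs the method are also accurate.
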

In fact, we will prove a stronger result, of which Theorem
\ref{thm:GRH-uniform} is the specific case $F=\Q$.
\begin{theorem}
  Let $F$ be any number field, and assume the Generalized Riemann
  Hypothesis for all Dedekind zeta functions of number fields. For any
  $g > 0$ and any odd $n > 0$, there exists a bound $N = N(g,n,F)$
  such that $\mathscr{A}(K,g,\ell) = \varnothing$ for any $K \in
  \mathscr{F}(F,n)$ and any prime $\ell > N$.
\end{theorem}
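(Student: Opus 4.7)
The plan is to mimic the proof of the non-uniform finiteness theorem (the first result of \S5.1), but to apply the effective Chebotarev Proposition \ref{prop:GRH-Cheb-bound} over $F$ rather than over $K$. This replaces the dependence on $\Delta_{\tilde{K}}$ (which varies with $K$) by a dependence on $\Delta_{\tilde{F}}$ (which is fixed), yielding a bound depending only on $g$, $n$, and $F$. The odd-degree hypothesis $n=[K:F]$ is used precisely to ensure that any prime $\frak{p}$ of $K$ lying over a prime $\frak{p}_F$ of $F$ with $f_{\frak{p}_F/p}=1$ automatically satisfies $f_{\frak{p}/p} \mid n$, hence has odd residue degree over $\Q$, so that Mazur's trick (Proposition \ref{prop:Mazur_trick}) still applies.

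First, I would observe that if $[A] \in \mathscr{A}(K,g,\ell)$ with $\ell > C_7(g,n\cdot n_F)$, then hypothesis (\ref{A2}) holds, and combining Lemma \ref{lem:e-properties}(a), Lemma \ref{lemma:mQ}, and the observation $m_\Q \mid e/2$ from \S4.1 shows that $m_\Q$ divides the fixed integer $\tfrac{1}{2}M'(2g) \cdot n \cdot n_F$. Consequently $m_\Q$ lies in the finite set
\[ \mathcal{M}(g,n,F) := \bigl\{\, m \in \Z_{>0} : m \bigm| \tfrac{1}{2}M'(2g) \cdot n \cdot n_F \,\bigr\}, \]
which depends only on $g$, $n$, and $F$. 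I would then define
\[ N(g,n,F) := \max\Bigl\{\, C_7(g, n\cdot n_F),\ \max_{m \in \mathcal{M}(g,n,F)} C_6(m,g,n,F)\,\Bigr\}. \]

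Arguing by contradiction, suppose $\ell > N(g,n,F)$ and $[A]\in\mathscr{A}(K,g,\ell)$ for some $K \in \mathscr{F}(F,n)$. Setting $m=m_\Q \in \mathcal{M}(g,n,F)$, the bound $\ell > C_6(m,g,n,F)$ lets me apply Proposition \ref{prop:GRH-Cheb-bound} to the parameters $(m,g,n,F)$, choosing $\sigma \in \Gal(L_0\tilde{F}/F)$ trivial on $L_0 = \Q(\bmu_\ell)_m$. This yields a rational prime $p < (\ell/(4g))^{1/n}$ and a prime $\frak{p}_F$ of $F$ above $p$ with $f_{\frak{p}_F/p}=1$ and such that $p$ is an $m_\Q$-th power residue modulo $\ell$. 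Picking any prime $\frak{p}$ of $K$ above $\frak{p}_F$, I get $f_{\frak{p}/p} = f_{\frak{p}/\frak{p}_F} \mid [K:F]=n$, so $f_{\frak{p}/p}$ is odd, and $q = N_{K/\Q}\frak{p} = p^{f_{\frak{p}/p}} \leq p^n < \ell/(4g)$.

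Now the two halves clash. By Proposition \ref{prop:Mazur_trick}, $\chi(m_\Q)(\Frob_\frak{p}) \neq 1$. On the other hand, viewing $\chi(m_\Q)$ as a character of $G_\Q$ and noting that $\Frob_\frak{p}$ acts as $\Frob_p^{f_{\frak{p}/p}}$ (modulo inertia, which is irrelevant since $p\neq \ell$), we have $\chi(m_\Q)(\Frob_\frak{p}) = \chi(m_\Q)(\Frob_p)^{f_{\frak{p}/p}} = 1$ by the choice of $p$, a contradiction. The one conceptual step is recognizing that the argument only needs $\tilde{F}$ (not $\tilde{K}$) as Chebotarev input; after that, the routine bookkeeping with $C_6$ and $C_7$ goes through just as in \S5.1. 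The oddness of $n$ is essential: for even $n$, the residue degree $f_{\frak{p}/p}$ could be even and the Mazur-trick proposition would fail to yield a contradiction.
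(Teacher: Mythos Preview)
Your proof follows the paper's argument almost verbatim: define $N$ via $C_7(g,n_Fn)$ and the $C_6(m,g,n,F)$ for $m$ ranging over divisors of $\tfrac{1}{2}M'(2g)n_Fn$, apply Proposition~\ref{prop:GRH-Cheb-bound} over $F$ to produce a small prime $p$ that is an $m_\Q$-th power residue with $f_{\frak{p}_F/p}=1$, lift to $K$, and collide with Proposition~\ref{prop:Mazur_trick}.

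There is one slip. You write ``Picking any prime $\frak{p}$ of $K$ above $\frak{p}_F$, I get $f_{\frak{p}/p} = f_{\frak{p}/\frak{p}_F} \mid [K:F]=n$.'' The divisibility $f_{\frak{p}/\frak{p}_F}\mid n$ is not valid in general: $K/F$ need not be Galois, and in a non-Galois extension the residue degrees above a given prime need not divide the global degree (e.g.\ in a non-Galois cubic, a rational prime can split as $f=1$ plus $f=2$). What is true is that $\sum_{\frak{p}\mid\frak{p}_F} e_{\frak{p}/\frak{p}_F} f_{\frak{p}/\frak{p}_F} = n$ is odd, so at least one summand is odd, hence at least one $f_{\frak{p}/\frak{p}_F}$ is odd; choose such a $\frak{p}$. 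This is exactly what the paper does (``we may choose a prime $\frak{p}\mid\frak{p}_F$ of $K$ such that $f_{\frak{p}/\frak{p}_F}$ is odd''). With that correction your argument is complete and matches the paper's.
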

\begin{remark}
  The assumption of GRH is only needed for the Dedekind zeta functions
  of number fields of the form $LK$, where $L \subseteq \Q(\bmu_\ell)$
  for some prime $\ell$, and $K/F$ is an extension of degree $n$.
\end{remark}
\begin{proof}
 Set $\mathscr{M}(g,n,F) := \{m \in \Z_{>0} : m \mid
 \tfrac{1}{2}M'(2g)n_Fn \}$. Define
\begin{equation*}
\begin{split}
N_1 = N_1(g,n,F) & := \max \{ C_6(m, g, n, F) : m \in \mathscr{M}(g,n,F)
\}, \\
N = N(g,n,F) & := \max \{N_1, C_7(g, n_F n) \}.
\end{split}
\end{equation*}
Suppose $\ell > N$ is a prime number. Let $K \in \mathscr{F}(F,n)$
(and hence $n_K = n_Fn$),
and for the sake of contradiction, suppose $[A] \in
\mathscr{A}(K,g,\ell)$. By the definition of $N$, $\ell > C_7(g,
n_K)$, and so \eqref{A2} holds. By Lemma \ref{lem:e-properties} and
\S4.1, we know the quantities $m_\Q$ and $e$ associated to $A$ satisfy
$m_\Q \mid \frac{e}{2} \mid \frac{1}{2}M'(2g)n_Fn$. Now, again by the
definition of $N$, $\ell > C_6(m_{\Q}, g, n, F)$.

Let $\tilde{F}$ denote the Galois closure of $F$ over $\Q$, and set
$\tilde{L} := \Q(\bmu_\ell)_{m_\Q}\tilde{F}$. We have assumed $\ell >
C_6(m_\Q, g, n, F)$, and so Proposition \ref{prop:GRH-Cheb-bound}
applies. Thus, there exists a rational prime $p <
(\frac{\ell}{4g})^{1/n}$ and a prime $\frak{p}_F$ of $F$ dividing $p$
for which $f_{\frak{p}_F/p} = 1$; moreover, we may assume $p$ is an
$m_\Q$-th power modulo $\ell$. Thus, ${\chi(m_\Q)(\Frob_{\frak{p}_F})=1}$.

However, as $n = [K:F]$ is odd, we may choose a prime $\frak{p} \mid
\frak{p}_F$ of $K$ such that $f_{\frak{p}/\frak{p}_F}$ is odd. Thus
$f_{\frak{p}/p} = f_{\frak{p}/\frak{p}_F} \cdot f_{\frak{p}_F/p}$ is
odd and at most $n$, and $N_{K/\Q}\frak{p} = p^{f_{\frak{p}/p}} \leq
p^n < \frac{\ell}{4g}$. Hence, by Proposition \ref{prop:Mazur_trick},
$\chi(m_\Q)(\Frob_\frak{p}) \neq 1$. But $\Frob_\frak{p} =
\Frob_{\frak{p}_F}^{f_{\frak{p}/\frak{p}_F}}$, so
$\chi(m_\Q)(\Frob_{\frak{p}_F}) \neq 1$, which gives a contradiction.
\end{proof}

\begin{remark}
  Fix an algebraic closure $\bar{\Q}$ of $\Q$. For a number field $K
  \subset \bar{\Q}$, $g > 0$, and a prime $\ell$, define
  $\bar{\mathscr{A}}(K,g,\ell)$ to be the image of
  $\mathscr{A}(K,g,\ell)$ in the set $\mathscr{A}(\bar{\Q},g)$ of
  isomorphism classes of $g$-dimensional abelian varieties over
  $\bar{\Q}$. For any $n > 0$, define 
\begin{equation*}
\mathscr{A}(n,g,\ell) := \bigcup_{K \subset \bar{\Q}, K \in
  \mathscr{F}(\Q,n)} \bar{\mathscr{A}}(K,g,\ell) \subseteq
\mathscr{A}(\bar{\Q},g). 
\end{equation*}
Conjecture \ref{conj:uniform-version} may be restated as follows:
Given $n > 0$ and $g > 0$, $\mathscr{A}(n,g,\ell) = \varnothing$ 
for $\ell$ sufficiently large. One might hope that even the set
$\mathscr{A}(n,g,\ell)$ is always finite, but this is not the case.
\end{remark}
\begin{proposition}
$\mathscr{A}(2,1,2)$ is infinite.
\end{proposition}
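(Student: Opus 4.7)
The plan is to exhibit an infinite family of elliptic curves $E_n$, one for each integer $n \geq 1$, each defined over some quadratic field $K_n$, such that $[E_n] \in \mathscr{A}(K_n, 1, 2)$ and such that the $j$-invariants $j(E_n)$ are pairwise distinct; since the $j$-invariant determines the $\bar{\Q}$-isomorphism class, this will suffice.

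The first step is an elementary characterization of $\mathscr{A}(K, 1, 2)$. When $\ell = 2$, the cyclotomic extension $K(\bmu_{2^\infty})/K$ is itself pro-$2$, and Serre-Tate gives that $K(E[2^\infty])/K(E[2])$ is pro-$2$ and unramified outside $2$ whenever $E/K$ has good reduction outside $2$. Consequently the condition $K(E[2^\infty]) \subseteq \ten(K, 2)$ reduces to requiring good reduction away from $2$ together with the condition that $\Gal(K(E[2])/K) \subseteq \mathrm{GL}_2(\F_2) \cong S_3$ be a $2$-group. Since the pro-$2$ field $K(\bmu_{2^\infty})$ contains no cubic extension of $K$, an image of order $3$ or $6$ cannot be absorbed by compositing with the cyclotomic tower; hence the second condition is equivalent to the existence of a point of order $2$ in $E(K)$.

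With this criterion in hand, the construction is direct. For each $n \geq 1$, I set $D_n := 1 - 2^{n+2}$ and $K_n := \Q(\sqrt{D_n})$. Since $D_n$ is a negative integer with $D_n \equiv 1 \pmod{4}$, $K_n$ is an imaginary quadratic field, and $\lambda_n := (1 + \sqrt{D_n})/2$ lies in $\mathcal{O}_{K_n}$ and satisfies $\lambda_n(\lambda_n - 1) = (D_n - 1)/4 = -2^n$. I take the Legendre-form curve
\begin{equation*}
E_n \colon y^2 = x(x-1)(x-\lambda_n)
\end{equation*}
over $K_n$; its discriminant $16\lambda_n^2(\lambda_n - 1)^2 = 2^{2n+4}$ is supported only at primes of $K_n$ above $2$, so $E_n$ has good reduction away from $2$, and $(0,0)$ is a $K_n$-rational $2$-torsion point. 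The criterion then yields $[E_n] \in \mathscr{A}(K_n, 1, 2)$. From $\lambda_n^2 - \lambda_n + 1 = 1 - 2^n$ and the standard formula $j(\lambda) = 2^8(\lambda^2 - \lambda + 1)^3/(\lambda(\lambda - 1))^2$, I compute $j(E_n) = 2^{8-2n}(1 - 2^n)^3$, a sequence of rationals with $|j(E_n)| \to \infty$. Thus the $E_n$ represent pairwise distinct $\bar{\Q}$-isomorphism classes in $\mathscr{A}(2, 1, 2)$.

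The main (and really the only) conceptual step is the first-paragraph characterization of $\mathscr{A}(K, 1, 2)$, which hinges on ruling out residual images of order divisible by $3$; this obstruction is dissolved by the observation that $K(\bmu_{2^\infty})/K$, being pro-$2$, admits no cubic subextension. The remainder amounts to solving the $\{2\}$-unit equation $\lambda(\lambda - 1) = -2^n$ explicitly in a quadratic field depending on $n$, which gives the desired family essentially for free.
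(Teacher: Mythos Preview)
Your proof is correct and follows essentially the same approach as the paper: both construct Legendre curves $y^2 = x(x-1)(x-\lambda)$ over a varying family of quadratic fields with $\lambda$ and $\lambda-1$ supported only above $2$, then observe the $j$-invariants are distinct. The only cosmetic differences are that the paper uses real quadratic fields $\Q(\sqrt{2^{2i}+1})$ with $\epsilon_i$ a genuine unit, while you use imaginary quadratic fields, and you spell out the membership criterion for $\mathscr{A}(K,1,2)$ more explicitly than the paper does.
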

\begin{proof}
  For each $i \geq 0$, let $K_i \subseteq \bar{\Q}$ be the splitting
  field for $x^2 + 2^{i+1}x - 1$, and let $\epsilon_i$ denote the root
  of this polynomial given by $-2^i + \sqrt{2^{2i} +1}$. Then $[K_i :
  \Q] = 2$ for all $i$. Moreover, as the defining polynomial is monic
  with unit constant term, $\epsilon_i \in
  \mathscr{O}_{K_i}^\times$. On the other hand, $\epsilon_i - 1$ 
  satisfies $x^2 + (2^{i+1} + 2)x + 2^{i+1}$, and so $\epsilon_i - 1$
  lies in $\mathscr{O}_{K_i} \cap \mathscr{O}_{K_i}[\frac{1}{2}]^\times$.

  Let $E_i$ be the elliptic curve over $K_i$ defined by the
  equation $y^2 = x(x - 1)(x-\epsilon_i)$. Immediately we see that
  $E_i$ has good reduction away from $2$. Moreover, $E_i[2]$ is
  rational over $K_i$, and so $[E_i] \in
  \mathscr{A}(K_i,1,2)$. However, this family corresponds to
  infinitely many distinct $j$-invariants, and so the collection $\{ 
  [E_i \times_{K_i} \bar{\Q}] \}$ is an infinite subset of
  $\mathscr{A}(2,1,2)$.  
\end{proof}

\section{Ingredients from the Structure of the Special Fiber}

\subsection{Constraints from the action of inertia, I}

The aim of this section is to state and prove a formula relating the
dimension of an abelian variety to certain invariants. This will
extend the results of \cite[\S2]{Tamagawa:1995} into a more general
setting. We return to the notations of \S3. In particular, the
extension $L/K_\lambda^{\mathrm{ur}}$ corresponds to a subgroup
$J_\lambda$ of $I_\lambda$. Let $\kappa = \kappa(\lambda)$ denote the
residue field of $\mathscr{O}_{K_\lambda}$, and let $M := I_\lambda /
J_\lambda$.

Let $\mathcal{A}$ be the semistable N\'{e}ron model of $A_L :=
A_{K_\lambda} \times_{K_\lambda} L$ over $\mathscr{O}_L$. Then the
N\'eron property implies that the natural action of $M$ on $A_L$
(which is compatible with the natural faithful action on $L$) extends
to an action of $M$ on $\mathcal{A}$ (itself compatible with the
natural faithful action on $\mathscr{O}_L$). The latter action induces
a natural action of $M$ on the special fiber, here denoted
$\mathcal{A}_{\bar{\kappa}}$. It also induces a natural action on the
connected component $B := \mathcal{A}^0_{\bar{\kappa}}$ at the
origin. The actions of $M$ on $\mathcal{A}_{\bar{\kappa}}$ and $B$ are
compatible with the natural action of $M$ on $\bar{\kappa}$, which is
trivial as $M$ is a quotient of $I_\lambda$. Equivalently, $M$ acts on
$\mathcal{A}_{\bar{\kappa}}$ and $B$ \emph{over} $\bar{\kappa}$. As
$A_L$ has semistable reduction, $B$ is a semi-abelian variety over
$\bar{\kappa}$. Let $T$ denote the torus part of $B$, so that we have
the following canonical exact sequence:
\begin{equation}
\xymatrix{
0 \ar[r] & T \ar[r] & B \ar[r] & \overline{B} \ar[r] & 0
},
\end{equation}
where $\overline{B} := B/T$ is an abelian variety over
$\bar{\kappa}$. As this exact sequence is canonical, the action of $M$
preserves it. In particular, $M$ acts on $T$ and $\overline{B}$.

Let $A_{K_\lambda}^\vee$ be the dual abelian variety of
$A_{K_\lambda}$ over $K_\lambda$, and fix a polarization $\pi \colon
A_{K_\lambda} \to A_{K_\lambda}^\vee$ over $K_\lambda$. For each prime
$\ell' \neq \ell$, $\pi$ induces an isomorphism
\begin{equation}
V_{\ell'}(A_{K_\lambda}) \rightarrow V_{\ell'}(A_{K_\lambda}^\vee)
\end{equation}
of $G_{K_\lambda}$-modules. In particular, the field $L^\vee$, defined
to be the minimal Galois extension of $K_\lambda^\mathrm{ur}$ over
which $A_{K_\lambda}^\vee$ obtains semistable reduction, coincides
with $L$. Hence, the analogous quantities for the dual
$A_{K_\lambda}^\vee$ also coincide with those for
$A_{K_\lambda}$. That is, $e_{A_{K_\lambda}^\vee} =
e_{A_{K_\lambda}}$, $J_\lambda^\vee = J_\lambda$, etc. If we denote by 
$\mathcal{A}^\vee$ the semistable N\'eron model of $A_L^\vee :=
A_{K_\lambda}^\vee \times_{K_\lambda} L$ over $\mathscr{O}_L$, we similarly
obtain a canonical exact sequence
\begin{equation}
\xymatrix{
0 \ar[r] & T^\vee \ar[r] & B^\vee \ar[r] & \overline{B}^\vee \ar[r] & 0
},
\end{equation}
where $B^\vee$ is the connected component of
$\mathcal{A}^\vee_{\bar{\kappa}}$ at the origin, etc.

In the current context, we let $(\cdot)^*$ denote the functor $X
\mapsto \Hom(X, \Q_{\ell'}(1))$.
\begin{lemma}
Let $\ell' \neq \ell$ be a prime number. Then
\begin{equation}
V_{\ell'}(A_{K_\lambda})^\mathrm{ss} \cong V_{\ell'}(B) \oplus
V_{\ell'}(T^\vee)^*
\end{equation}
as $M$-modules.
\end{lemma}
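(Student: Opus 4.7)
The plan is to invoke the standard structure theorem for the $\ell'$-adic Tate module of a semistable abelian variety (Grothendieck, \cite[Expos\'e IX]{SGA7I}). Since $A_L$ has semistable reduction over $\mathscr{O}_L$ with connected special-fibre component $B$ sitting in $0 \to T \to B \to \overline{B} \to 0$, I would first recall the canonical short exact sequence of $G_{K_\lambda}$-modules
\begin{equation*}
0 \longrightarrow V_{\ell'}(B) \longrightarrow V_{\ell'}(A_{K_\lambda}) \longrightarrow V_{\ell'}(T^\vee)^* \longrightarrow 0,
\end{equation*}
where the submodule is $V_{\ell'}(A_{K_\lambda})^{J_\lambda}$, identified with $V_{\ell'}(B)$ via the N\'eron mapping property applied to $A_L$, and the quotient is identified with $V_{\ell'}(T^\vee)^*$ using the Weil pairing between $A_{K_\lambda}$ and $A_{K_\lambda}^\vee$ together with the analogous identification $V_{\ell'}(A_{K_\lambda}^\vee)^{J_\lambda} = V_{\ell'}(B^\vee)$ (whose toric part is $T^\vee$, by the fact that $L^\vee = L$ and Grothendieck duality for the toric pieces). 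Each term is functorial in $A_{K_\lambda}$, so the sequence is $G_{K_\lambda}$-equivariant.

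Next, I would pass to semisimplifications. For any short exact sequence $0 \to X \to Y \to Z \to 0$ of $G$-modules over a field, the Jordan-H\"older factors of $Y$ are the union (with multiplicity) of those of $X$ and $Z$, so $Y^{\mathrm{ss}} \cong X^{\mathrm{ss}} \oplus Z^{\mathrm{ss}}$ as $G$-modules. Applied to the sequence above,
\begin{equation*}
V_{\ell'}(A_{K_\lambda})^{\mathrm{ss}} \cong V_{\ell'}(B)^{\mathrm{ss}} \oplus \bigl(V_{\ell'}(T^\vee)^*\bigr)^{\mathrm{ss}}
\end{equation*}
as $G_{K_\lambda}$-modules, and in particular as $I_\lambda$-modules. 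Since $J_\lambda$ acts trivially on $V_{\ell'}(B)$ and on $V_{\ell'}(T^\vee)^*$ (both are built from objects over $\bar{\kappa}$ on which $I_\lambda$ acts through $M = I_\lambda/J_\lambda$), these restrictions are representations of the finite group $M$ on $\mathbb{Q}_{\ell'}$-vector spaces. By Maschke's theorem they are already semisimple, and so $V_{\ell'}(B)^{\mathrm{ss}}|_M = V_{\ell'}(B)|_M$ and $(V_{\ell'}(T^\vee)^*)^{\mathrm{ss}}|_M = V_{\ell'}(T^\vee)^*|_M$, giving the desired $M$-module isomorphism.

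The step requiring most care is verifying that the two $M$-actions one sees on the graded pieces actually coincide: on one hand the restriction of the Galois action on $V_{\ell'}(A_{K_\lambda})^{J_\lambda}$, on the other the action induced by the $M$-action on $B$ (resp.\ $T^\vee$) over $\bar{\kappa}$ described at the start of \S6.1. This compatibility follows from the N\'eron mapping property: the $M = \Gal(L/K_\lambda^{\mathrm{ur}})$-action on $A_L$ extends to $\mathcal{A}$, descends to the special fibre, and is functorial in the formation of $T$, $B$, $\overline{B}$, $T^\vee$, so it agrees with the action induced from $G_{K_\lambda}$ acting on $V_{\ell'}(A_{K_\lambda})^{J_\lambda}$ through $I_\lambda/J_\lambda = M$. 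The corresponding compatibility for the quotient $V_{\ell'}(T^\vee)^*$ is automatic from naturality of the Weil pairing under the Galois action and under the isomorphism $L^\vee = L$.
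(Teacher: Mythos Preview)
Your proof is correct and follows essentially the same route as the paper: both arguments identify $V_{\ell'}(A_{K_\lambda})^{J_\lambda}$ with $V_{\ell'}(B)$ and the quotient with $V_{\ell'}(T^\vee)^*$ via the orthogonality/filtration machinery of \cite[Expos\'e IX]{SGA7I}, then pass to semisimplifications. The only stylistic difference is that the paper phrases the splitting step as ``the $J_\lambda$-action on $V$ is unipotent of level $\leq 2$, hence $V^{\mathrm{ss}} \cong V^{J_\lambda} \oplus (V/V^{J_\lambda})$,'' whereas you make explicit the Maschke argument on the finite quotient $M$ (char.\ $0$) that underlies this; your added paragraph verifying that the two descriptions of the $M$-action on the graded pieces agree is a point the paper leaves implicit.
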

\begin{proof}
Set $V := V_{\ell'}(A_{K_\lambda})$ and $V^\vee :=
V_{\ell'}(A_{K_\lambda}^\vee)$. As above, we let $\mathcal{A}^\vee$
denote the semistable N\'eron model of $A_L^\vee$ over
$\mathcal{O}_L$. Then there is a natural perfect pairing
\begin{equation}\label{eqn:pairing}
V \times V^\vee \longrightarrow \Q_{\ell'}(1).
\end{equation}
Furthermore, by \cite[Expos\'e IX]{SGA7I}, $V$ and $V^\vee$ admit the
following natural filtrations:
\begin{equation}
V \supseteq V^f \supseteq V^t \supseteq \{0 \}, \qquad \qquad V^\vee
\supseteq (V^\vee)^f \supseteq (V^\vee)^t \supseteq \{0 \}.
\end{equation}
Here, we have the following definitions/equalities:
\begin{equation}
\begin{split}
V^f & := V^{J_\lambda} = V_{\ell'}(B), \\
(V^\vee)^f & := (V^\vee)^{J_\lambda} = V_{\ell'}(B^\vee), \\
V^t & := V_{\ell'}(T), \\
(V^\vee)^t & := V_{\ell'}(T^\vee)).
\end{split}
\end{equation}
With respect to the pairing \eqref{eqn:pairing},
$V^t$ and $(V^\vee)^f$ are exact annihilators of each other. Likewise,
$V^f$ and $(V^\vee)^t$ are exact annihilators of each other. Since the
action of $J_\lambda$ on $V$ is unipotent of level $\leq 2$ (meaning that
for every $\xi \in J_\lambda$, $(\xi - \id)^2 = 0$ on $V$), we have
\begin{equation}
V^\mathrm{ss} \cong V^{J_\lambda} \oplus (V/V^{J_\lambda}) = V^f
\oplus (V/V^f) \cong V^f \oplus ( (V^\vee)^t)^*
\end{equation}
as $M$-modules, as desired. \qedhere
\end{proof}

Let $M' = \langle \gamma \rangle$ be a cyclic subgroup of $M$, and let
$e'$ be the order of $M'$. We consider the group algebras $\Z[M']$ and
$\Q[M']$ of $M'$ over $\Z$ and $\Q$, respectively. Note that we have: 
\begin{equation}
\Q[M'] \cong \Q[x]/(x^{e'} - 1) \cong \prod_{d \mid e'} \Q(\zeta_d),
\end{equation}
where the isomorphisms are given by identifying the generators
$\gamma$, $x \pmod{x^{e'}-1}$, and $(\zeta_d)_{d \mid e'}$ in each algebra. For
each divisor $d \mid e'$, let $\Phi_d(x) \in \Z[x]$ denote the $d$-th
cyclotomic polynomial. Then $\Phi_d(\gamma) \in \Z[M']$ generates
$\frak{a}_d$, the kernel of the natural homomorphism $\Z[M']
\twoheadrightarrow \Z[\zeta_d]$ defined by  $\gamma \mapsto \zeta_d$.

There is a natural $\Z$-algebra homomorphism $\Z[M'] \to \End(B)$, and
so we may consider the quotient group scheme $B_d := B/\frak{a}_d B$
of $B$, on which $\Z[M']$ acts via $\Z[M'] \twoheadrightarrow
\Z[M']/\frak{a}_d \cong \Z[\zeta_d]$. Similarly, we have homomorphisms
from $\Z[M']$ into the endomorphism rings of $T$, $B^\vee$ and
$T^\vee$, and so we may define quotient group schemes $T_d :=
T/\frak{a}_d T$, $B_d^\vee := B^\vee / \frak{a}_d B^\vee$, $T_d^\vee
:= T^\vee / \frak{a}_d T^\vee$. We observe by
\cite[Lem.~2.1(i)]{Tamagawa:1995} that $V_{\ell'}(B_d)$ and
$V_{\ell'}(T_d^\vee)$ are free $\Z[\zeta_d] \otimes_\Z
\Q_{\ell'}$-modules. Define
\begin{equation*}
\begin{split}
  b_d := & \rank_{\Z[\zeta_d] \otimes_\Z \Q_{\ell'}} V_{\ell'} (B_d), \\
  t_d := & \rank_{\Z[\zeta_d] \otimes_\Z \Q_{\ell'}} V_{\ell'} (T_d^\vee), \\
  n_d := & b_d + t_d, \\
  g_d := & \dim B_d = \dim B_d^\vee, \\
  h_d := & \dim T_d = \dim T_d^\vee.
\end{split}
\end{equation*}

\begin{proposition}\label{prop:nd_for_small_d}
Let $\varphi$ denote Euler's totient function. There are non-negative
integers $n_d$, indexed by the divisors of $e'$, such that
\begin{equation}\label{eqn:2g-relation}
2g = \sum_{d \mid e'} n_d \varphi(d), \qquad e' = \lcm \{d : n_d > 0 \}.
\end{equation}
Moreover, if $d \leq 2$, then $2 \mid n_d$.
\end{proposition}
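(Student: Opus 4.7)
The plan is to deduce all three claims from the $M$-equivariant isomorphism of the preceding lemma, $V_{\ell'}(A_{K_\lambda})^{\mathrm{ss}} \cong V_{\ell'}(B) \oplus V_{\ell'}(T^\vee)^*$, together with the idempotent structure of $\Q_{\ell'}[M']$ and the Weil pairing induced by $\pi$.

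For the dimension formula, I would restrict the $M$-action to $M' = \langle \gamma \rangle$ and exploit the decomposition
\begin{equation*}
\Q_{\ell'}[M'] \cong \prod_{d \mid e'} \Q_{\ell'} \otimes_\Q \Q(\zeta_d)
\end{equation*}
into orthogonal idempotents. In characteristic zero the quotients $V_{\ell'}(B)/\frak{a}_d V_{\ell'}(B)$ and $V_{\ell'}(T^\vee)^*/\frak{a}_d V_{\ell'}(T^\vee)^*$ coincide with the corresponding idempotent summands, and by \cite[Lem.~2.1(i)]{Tamagawa:1995} they are free $\Z[\zeta_d] \otimes_\Z \Q_{\ell'}$-modules of ranks $b_d$ and $t_d$ respectively. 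Counting $\Q_{\ell'}$-dimensions then gives
\begin{equation*}
2g = \dim_{\Q_{\ell'}} V_{\ell'}(A_{K_\lambda})^{\mathrm{ss}} = \sum_{d \mid e'}(b_d + t_d)\varphi(d) = \sum_{d \mid e'} n_d \varphi(d).
\end{equation*}

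The identity $e' = \lcm\{d : n_d > 0\}$ is then a pure faithfulness statement: since $J_\lambda$ is, by definition, the kernel of the inertial action on $V^{\mathrm{ss}}$, the subgroup $M' \leq M$ acts faithfully, so the order of $\gamma$ on $V^{\mathrm{ss}}$ is exactly $e'$; but on each nonzero $d$-isotypic summand $W_d := e_d V^{\mathrm{ss}}$ this order is $d$, so the lcm of such $d$ is $e'$.

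For the parity claim at $d \leq 2$, the polarization $\pi$ together with the Weil pairing yields a nondegenerate alternating $G_{K_\lambda}$-equivariant form on $V_{\ell'}(A_{K_\lambda})$ with values in $\Q_{\ell'}(1)$. Since $\ell' \neq \ell$, inertia acts trivially on $\Q_{\ell'}(1)$, so this descends to an $M$-invariant nondegenerate alternating form on $V^{\mathrm{ss}}$. Invariance forces $\gamma^* = \gamma^{-1}$; hence after extending scalars to $\Q_{\ell'}(\zeta_{e'})$, the $\zeta$-eigenspace of $\gamma$ pairs nontrivially only with the $\zeta^{-1}$-eigenspace. Regrouping, the orthogonal complement of $W_d$ in $V^{\mathrm{ss}}$ is $\bigoplus_{d' \neq d} W_{d'}$, so the form restricts to a nondegenerate alternating form on $W_d$, which forces $\dim_{\Q_{\ell'}} W_d = n_d \varphi(d)$ to be even. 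For $d \in \{1,2\}$ we have $\varphi(d) = 1$, so $n_d$ itself is even. The main delicacy, as I see it, lies in the first step — matching $e_d V_{\ell'}(B)$ with $V_{\ell'}(B_d)$ as a free $\Z[\zeta_d] \otimes_\Z \Q_{\ell'}$-module of the asserted rank $b_d$ — but once the cited lemma of \cite{Tamagawa:1995} is invoked, the remainder is essentially linear algebra with the $M'$-action and the Weil pairing.
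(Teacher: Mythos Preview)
Your treatment of the first two claims---the dimension formula and the $\operatorname{lcm}$ identity---follows the paper's argument essentially verbatim: decompose $V_{\ell'}(B)$ and $V_{\ell'}(T^\vee)^*$ along the idempotents of $\Q_{\ell'}[M']$, count $\Q_{\ell'}$-dimensions, and invoke faithfulness of the $M'$-action on $V^{\mathrm{ss}}$.

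For the parity claim at $d\leq 2$ you take a genuinely different route. The paper argues geometrically: writing $g_d=\dim B_d$ and $h_d=\dim T_d=\dim T_d^\vee$, one has $b_d\varphi(d)=\dim_{\Q_{\ell'}}V_{\ell'}(B_d)=2(g_d-h_d)+h_d$ and $t_d\varphi(d)=h_d$, whence $n_d\varphi(d)=2g_d$ for \emph{every} $d$; the parity at $d\leq 2$ is then immediate since $\varphi(d)=1$. This buys more than you need---the identity $n_d\varphi(d)=2g_d$ is itself used later (e.g.\ in the bound $p\leq 2\max_d g_d+1$). Your symplectic argument, by contrast, is purely representation-theoretic and yields only the parity statement, though it is conceptually clean.

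One point deserves more care. You assert that the Weil pairing ``descends to an $M$-invariant nondegenerate alternating form on $V^{\mathrm{ss}}$,'' but $M$ does not act on $V$ (only $I_\lambda$ does, and $J_\lambda$ does not act trivially there), and the two-step filtration $V\supset V^f\supset 0$ is not self-dual: one has $(V^f)^\perp=V^t\subsetneq V^f$ in general, so the form neither restricts nondegenerately to $V^f$ nor passes to $V/V^f$. The fix is easy: either pass to the three-step filtration $V\supset V^f\supset V^t\supset 0$, which \emph{is} self-dual and whose associated graded is $M$-equivariantly isomorphic to $V^{\mathrm{ss}}$ and inherits a nondegenerate alternating form; or, more simply, lift $\gamma$ to any $\tilde\gamma\in I_\lambda$, note that $\tilde\gamma\in\mathrm{Sp}(V,\omega)$, and use that the characteristic polynomial of $\tilde\gamma$ on $V$ agrees with that of $\gamma$ on $V^{\mathrm{ss}}$, so the eigenvalues $\pm 1$ occur with even multiplicity.
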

\begin{proof} 
  The indices $n_d$ will be precisely as defined above. Note that
  $V_{\ell'}(T_d^\vee)^*$ is also a free $\Z[\zeta_d] \otimes_\Z
  \Q_{\ell'}$-module of rank $t_d$. Indeed, this follows from the fact
  that the automorphism ${\gamma \mapsto \gamma^{-1}}$ of the group
  algebra $\Z[M']$ induces an automorphism of $\Z[\zeta_d]$ (namely,
  $\zeta_d \mapsto \zeta_d^{-1}$). Further, the natural morphisms
\begin{equation}
B \rightarrow \bigoplus_{d \mid e'} B_d, \qquad T^\vee \rightarrow
\bigoplus_{d \mid e'}
T_d^\vee
\end{equation}
induce isomorphisms
\begin{equation}
\begin{split}
V^f = V_{\ell'}(B) & \cong \bigoplus_{d \mid e'} V_{\ell'}(B_d) \\
(V^\vee)^t = V_{\ell'}(T^\vee) & \cong \bigoplus_{d \mid e'} V_{\ell'}(T_d^\vee),
\end{split}
\end{equation}
respectively. The latter decomposition induces $V_{\ell'}(T^\vee)^*
\cong \bigoplus_{d \mid e'} V_{\ell'} (T_d^\vee)^*$. In summary:
\begin{equation}\label{eqn:ss-isom}
V_{\ell'}(A_{K_\lambda})^\mathrm{ss} \cong V_{\ell'}(B) \oplus
V_{\ell'}(T^\vee)^* \cong \bigoplus_{d \mid e'} \bigl( V_{\ell'}(B_d) \oplus
V_{\ell'}(T_d^\vee)^* \bigr).
\end{equation}
Of course, $\Z[\zeta_d] \otimes_\Z \Q_{\ell'}$ has dimension
$\varphi(d)$ as a $\Q_{\ell'}$-vector space, so by counting the
dimensions of $\Q_{\ell'}$-vector spaces on each side of
\eqref{eqn:ss-isom}, we obtain ${2g = \sum_{d \mid e'} n_d
  \varphi(d)}$. As $M$ acts on $V_{\ell'}(A_{K_\lambda})^\mathrm{ss}$
faithfully, we must have ${\lcm \{d : d \mid e', n_d > 0 \} = e'}$.

Next, we observe that $h_d = t_d \varphi(d)$, and also
\begin{equation*}
\begin{split}
b_d \varphi(d) & = \dim_{\Q_{\ell'}} V_{\ell'}(B_d) \\
 & = \dim_{\Q_{\ell'}} V_{\ell'}(\overline{B}_d) + \dim_{\Q_{\ell'}}
 V_{\ell'}(T_d) \\
 & = 2(g_d - h_d) + h_d = 2g_d - t_d\varphi(d).
\end{split}
\end{equation*}
Consequently $2g_d = n_d \varphi(d)$. Finally, for $1 \leq d \leq 2$,
we have $\varphi(d) = 1$, and so $2 \mid n_d$ as claimed.
\end{proof}

This allows us to improve the result of Corollary
\ref{cor:e-small-prime-factors}. 
\begin{proposition}
Let $p$ be a prime divisor of $e'$. Then
\begin{equation*}
p \leq 2 \cdot \max_{d \mid e'} \{g_d \} + 1.
\end{equation*} 
\end{proposition}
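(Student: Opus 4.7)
The plan is to extract the inequality directly from Proposition~\ref{prop:nd_for_small_d}, using both relations in \eqref{eqn:2g-relation} together with the identity $2g_d = n_d \varphi(d)$ derived at the end of its proof.

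First I would invoke the lcm-statement $e' = \lcm\{d \mid e' : n_d > 0\}$ to select, for the given prime $p \mid e'$, a particular divisor $d$ of $e'$ with $n_d > 0$ and $p \mid d$. Such a $d$ must exist, since otherwise every $d$ appearing in the lcm would be coprime to $p$, contradicting $p \mid e'$.

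Next I would bound $\varphi(d)$ from below. Writing $d = p^a m$ with $a \geq 1$ and $\gcd(p,m)=1$, multiplicativity of $\varphi$ gives $\varphi(d) = p^{a-1}(p-1)\varphi(m) \geq p - 1$. Combined with $n_d \geq 1$ and the identity $2g_d = n_d \varphi(d)$, this yields $2g_d \geq p - 1$, i.e. $p \leq 2g_d + 1 \leq 2\cdot\max_{d \mid e'}\{g_d\} + 1$, which is the desired inequality.

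There is essentially no obstacle: the work has already been done in Proposition~\ref{prop:nd_for_small_d}, and the present statement is a clean corollary. The only point worth emphasizing in the write-up is why one can find a $d$ with $p \mid d$ and $n_d > 0$, which is immediate from the lcm condition but deserves an explicit sentence.
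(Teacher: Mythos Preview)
Your proposal is correct and follows essentially the same argument as the paper: select $d \mid e'$ with $n_d > 0$ and $p \mid d$ via the lcm condition, then chain $p - 1 = \varphi(p) \leq \varphi(d) \leq n_d\varphi(d) = 2g_d$. The paper's write-up is a one-line inequality chain, but the content is identical to yours.
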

\begin{proof}
  By the lcm property satisfied by $e'$, there exists $d \mid e'$,
  $n_d > 0$, such that $p \mid d$. Then
\begin{equation*}
p = \varphi(p) + 1 \leq \varphi(d) + 1 \leq n_d \varphi(d) + 1 = 2g_d
+ 1. \qedhere 
\end{equation*}
\end{proof}
\begin{corollary}\label{cor:M-cyclic}
If $\ell > 2g + 1$, then $e_{A_{K_\lambda}}$ is prime to $\ell$, and
$M$ is cyclic.
\end{corollary}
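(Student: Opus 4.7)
The plan is to deduce both assertions from the preceding Proposition bounding primes $p \mid e'$ for cyclic subgroups $M' \leq M$, together with standard structural facts about inertia groups.

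First I would show that no element of $M$ has order divisible by $\ell$. Let $M' = \langle \gamma \rangle$ be any cyclic subgroup of $M$, of order $e'$. Since $B$ is a $g$-dimensional semi-abelian variety and $B_d$ is a quotient of $B$, we have $g_d = \dim B_d \leq \dim B = g$ (this is also transparent from the identity $2g = \sum_{d \mid e'} n_d \varphi(d) \geq n_d \varphi(d) = 2g_d$ proved in the previous Proposition). Therefore the preceding Proposition yields, for every prime $p \mid e'$,
\begin{equation*}
p \leq 2 \cdot \max_{d \mid e'} \{ g_d \} + 1 \leq 2g + 1 < \ell.
\end{equation*}
In particular $\ell \nmid e'$ for every cyclic subgroup of $M$, so no element of the finite group $M$ has order divisible by $\ell$. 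By Cauchy's theorem, $\ell \nmid \#M = e_{A_{K_\lambda}}$, which is the first claim.

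For the second claim, recall the standard structure of the inertia group: the wild inertia subgroup $P_\lambda \trianglelefteq I_\lambda$ is pro-$\ell$, and the tame quotient $I_\lambda / P_\lambda$ is pro-cyclic (isomorphic to $\prod_{\ell' \neq \ell} \Z_{\ell'}(1)$). Since $M = I_\lambda / J_\lambda$ is a finite group of order prime to $\ell$ by the first claim, the pro-$\ell$ subgroup $P_\lambda$ must land in $J_\lambda$. Hence $M$ is a quotient of the pro-cyclic group $I_\lambda / P_\lambda$, and therefore cyclic.

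There is essentially no obstacle here: the content is entirely in the previous Proposition (which already handles the delicate geometric input via the semistable reduction of $B$ and the decomposition of $V_{\ell'}(A_{K_\lambda})^{\mathrm{ss}}$). The only thing to check carefully is the bound $g_d \leq g$, which is immediate from $2g = \sum_{d} n_d \varphi(d)$ with non-negative summands, and the invocation of the tame/wild decomposition of inertia to conclude cyclicity once the order is prime to $\ell$.
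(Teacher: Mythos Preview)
Your proof is correct and follows essentially the same approach as the paper: both apply the preceding Proposition (together with the trivial bound $g_d \leq g$) to rule out $\ell$ dividing the order of any cyclic subgroup of $M$, and then invoke the tame/wild structure of inertia to conclude that $M$ is cyclic. The only difference is cosmetic---the paper argues by contradiction from a single cyclic subgroup of order $\ell$ (via Cauchy), whereas you quantify over all cyclic subgroups first; the content is identical.
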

\begin{proof}
  If $e_{A_{K_\lambda}} = \#M$ is divisible by $\ell$, then there
  exists a cyclic subgroup $M'$ of $M$ of order $e' = \ell$. By the
  previous proposition (or Corollary \ref{cor:e-small-prime-factors}),
  we then have $\ell \leq 2g + 1$, which contradicts the
  assumption. Since $e_{A_{K_\lambda}}$ is prime to $\ell$, $M$ arises
  as a tame inertia group, which is therefore cyclic. \qedhere
\end{proof}
Thus, if $\ell > 2g + 1$, we may apply the results in this section to
$M' = M$ and $e' = e_{A_{K_\lambda}}$.

\subsection{Constraints from the action of inertia, II}

Let us continue the notations of the previous subsection. However, we
now let $d$ denote a fixed divisor of $e_{A_{K_\lambda}}$, and keep
$d$ fixed throughout the current subsection. We will deduce further
constraints under the following assumptions:
\begin{equation}
\boxed{ \quad \mbox{$A_L$ has good reduction.} \quad } \tag{A3} \label{A3}
\end{equation}
\begin{equation}
\boxed{\quad \ell \nmid e_{A_{K_\lambda}}. \quad} \tag{A4} \label{A4}
\end{equation}
\begin{remark}
If we assume both \eqref{A1} and \eqref{A2}, then \eqref{A3} and
\eqref{A4} hold automatically. For \eqref{A3}, this follows from the
discussion prior to Lemma \ref{lem:e-properties}. For \eqref{A4}, this
follows from Corollary \ref{cor:e-small-prime-factors} and the
observation that $\ell > 2g+1$ under \eqref{A2}. 
\end{remark}

Under \eqref{A4}, $M$ is cyclic. Under \eqref{A3}, $B =
\mathcal{A}_{\bar{\kappa}}^0 = \mathcal{A}_{\bar{\kappa}}$; we obtain
the following decomposition and associated formula
\begin{equation*}
B \sim \!\!\!\!\!\!\!\! \bigoplus_{ \phantom{{}_{A_{K_\lambda}}
  }d \mid e_{A_{K_\lambda}}} \!\!\!\!\!\!\! B_d, 
\qquad 2g = \!\!\!\!\!\!\!\! \sum_{\phantom{ {}_{A_{K_\lambda}} }d
  \mid e_{A_{K_\lambda}}}
\!\!\!\!\!\!\!\! n_d \varphi(d).
\end{equation*}
Let $\gamma_d$ denote the $\ell$-rank of $B_d$, so that $0 \leq
\gamma_d \leq g_d$. (As we typically will assume \eqref{A1} and
\eqref{A2}, usually $\gamma_d = 0$. See the discussion prior to Lemma
\ref{lem:e-properties}.) Let $f$ and $f_\lambda$ denote,
respectively, the orders of $\ell \pmod{d}$ and
$\ell^{f_{\lambda/\ell}} \pmod{d}$ in $(\Z/d\Z)^\times$.

Let us consider the decomposition of $A_L$ in more
detail. The uniqueness of $L$, together with the fact that
$K_\lambda^\mathrm{ur}/K_\lambda$ is a Galois extension, implies that
$L/K_\lambda$ is Galois. We have the following exact sequence:
\begin{equation*}
\xymatrix{
1 \ar[r] &
\Gal(L/K_\lambda^\mathrm{ur}) \ar[r] &
\Gal(L/K_\lambda) \ar[r] &
\Gal(K_\lambda^\mathrm{ur}/K_\lambda) \ar[r] & 1
}.
\end{equation*}
As $\Gal(K_\lambda^\mathrm{ur}/K_\lambda) \cong G_{\kappa(\lambda)} \cong
\hat{\Z}$ is a free profinite group, this exact sequence
splits. So there is a group-theoretic section $s \colon
\Gal(K_\lambda^\mathrm{ur}/K_\lambda) \hookrightarrow
\Gal(L/K_\lambda)$. Let $L_0/K_\lambda$ be the subextension of
$L/K_\lambda$ corresponding to the subgroup $\mathrm{Im}(s) \subseteq
\Gal(L/K_\lambda)$. Then $L_0/K_\lambda$ is a finite, totally ramified
extension of degree $e_{A_{K_\lambda}}$, and $L = L_0 K_\lambda^\mathrm{ur}$.

In case $A_L$ has good reduction, then the inertia group $I_{L_0} \leq
G_{L_0}$ acts trivially on $V_{\ell'}(A_{K_\lambda})$ (as before,
$\ell'$ is a prime distinct from $\ell$). Hence, $A_{L_0} :=
A_{K_\lambda} \times_{K_\lambda} L_0$ has good reduction. Let
$\mathcal{A}_0$ be the proper smooth N\'eron model of $A_{L_0}$ over
$\mathscr{O}_{L_0}$, and let $B_0$ be the special fiber
$(\mathcal{A}_0)_{\kappa(\lambda)}$ of $\mathcal{A}_0$. As $A_{L_0}$
has good reduction, $B_0$ is an abelian variety over
$\kappa(\lambda)$. Necessarily, $B = B_0 \times_{\kappa(\lambda)}
\overline{\kappa(\lambda)}$.

Note that the abelian subvariety $\frak{a}_dB \subseteq B$ is stable
under the action of $G_{\kappa(\lambda)} =
\Gal(\overline{\kappa(\lambda)} / \kappa(\lambda))$. This follows from
the fact that the prime ideal $\frak{a}_d \subseteq \Z[M]$ is stable
under the natural action of $G_{\kappa(\lambda)}$ (induced by the
natural action of $G_{\kappa(\lambda)}$ on $M$). Thus, the abelian
subvariety $\frak{a}_dB \subset B$ and the quotient abelian variety $B
\twoheadrightarrow B/\frak{a}_dB = B_d$ descend to a unique abelian
subvariety and a unique quotient abelian variety $B_0
\twoheadrightarrow B_{0,d}$, respectively.

Let $\Frob_\lambda \in G_{\kappa(\lambda)}$ denote the
$\ell^{f_{\lambda/\ell}}$-th power Frobenius element of
$G_{\kappa(\lambda)}$, and let  
\begin{equation*}
F_\lambda \in \End_{\kappa(\lambda)}(B_{0,d})
\subseteq \End_{\overline{\kappa(\lambda)}} (B_d)_\Q = \End(B_d)_\Q
\end{equation*}
be the $\ell^{f_{\lambda/\ell}}$-th power Frobenius endomorphism. It
is well-known that on $V_{\ell'}(B_d)$ the actions of $F_\lambda$ and
$\Frob_\lambda$ coincide with each other. Since $\End(B_d)_\Q
\subseteq \End \bigl( V_{\ell'}(B_d) \bigr)$, this implies that the
conjugate action of $F_\lambda$ on $\End(B_d)_\Q$ induces an action of
$\Q(\zeta_d) \hookrightarrow \End(B_d)_\Q$ (provided $n_d > 0$, of
course). Necessarily, this coincides with the natural (Galois) action
of $\Frob_\lambda$ on $\Q(\zeta_d)$. Note that the latter action is
induced by $\ell^{f_{\lambda/\ell}} \pmod{d} \in (\Z/d\Z)^\times \cong
\Gal(\Q(\zeta_d)/\Q)$. In particular, we get that $F :=
F_\lambda^{f_\lambda}$ acts trivially on $\Q(\zeta_d)$. Equivalently,
$F$ and $\Q(\zeta_d)$ commute with each other in $\End(B_d)_\Q$. So
let $\Q(\zeta_d)'$ denote the centralizer of $\Q(\zeta_d)$ in
$\End(B_d)_\Q$. We have shown:
\begin{proposition}\label{prop:F-in-centralizer}
  If \eqref{A3} and \eqref{A4} hold, then $\Q(\zeta_d)[F] \subseteq
  \Q(\zeta_d)'$.
\end{proposition}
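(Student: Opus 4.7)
The plan is to observe that the proposition is essentially a formal consequence of the discussion immediately preceding it; the real work lies in the compatibility between the geometric Frobenius endomorphism $F_\lambda$ and the arithmetic Frobenius $\Frob_\lambda$, which has already been set up. Since $\Q(\zeta_d)$ is commutative, it is contained in its own centralizer $\Q(\zeta_d)' \subseteq \End(B_d)_\Q$, and $\Q(\zeta_d)'$ is a $\Q$-subalgebra. Hence to show $\Q(\zeta_d)[F] \subseteq \Q(\zeta_d)'$ it is enough to show that $F \in \Q(\zeta_d)'$, i.e.\ that $F$ commutes with every element of $\Q(\zeta_d)$ inside $\End(B_d)_\Q$.

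First I would make explicit the identification of conjugation actions. Using the faithful embedding $\End(B_d)_\Q \hookrightarrow \End(V_{\ell'}(B_d))$ coming from the $\ell'$-adic Tate module of the abelian variety $B_{0,d}$ over $\kappa(\lambda)$, the fact that $F_\lambda$ and $\Frob_\lambda$ coincide as endomorphisms of $V_{\ell'}(B_d)$ (a standard consequence of good reduction, which holds under \eqref{A3}, together with \eqref{A4} ensuring $M$ is cyclic and that $B_{0,d}$ makes sense) translates the inner automorphism $x \mapsto F_\lambda x F_\lambda^{-1}$ of $\End(B_d)_\Q$ into conjugation by $\Frob_\lambda$. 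Restricting to the subring $\Q(\zeta_d) \subset \End(B_d)_\Q$, this conjugation action is precisely the natural Galois action of $\Frob_\lambda$ on $\Q(\zeta_d)$ via the isomorphism $\Gal(\Q(\zeta_d)/\Q) \cong (\Z/d\Z)^\times$, under which $\Frob_\lambda$ corresponds to $\ell^{f_{\lambda/\ell}} \pmod{d}$.

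Second, I would invoke the definition of $f_\lambda$ as the multiplicative order of $\ell^{f_{\lambda/\ell}} \pmod{d}$: this says exactly that $\Frob_\lambda^{f_\lambda}$ acts as the identity on $\Q(\zeta_d)$. Consequently conjugation by $F = F_\lambda^{f_\lambda}$ is the identity on $\Q(\zeta_d) \subset \End(B_d)_\Q$, i.e.\ $F$ commutes elementwise with $\Q(\zeta_d)$, so $F \in \Q(\zeta_d)'$. Combined with $\Q(\zeta_d) \subseteq \Q(\zeta_d)'$, this yields $\Q(\zeta_d)[F] \subseteq \Q(\zeta_d)'$.

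The only genuinely nontrivial ingredient is the compatibility of $F_\lambda$ with $\Frob_\lambda$ on the Tate module together with the injectivity of $\End(B_d)_\Q \hookrightarrow \End(V_{\ell'}(B_d))$; both are classical facts about abelian varieties with good reduction over finite fields, and both have effectively been used already in the paragraph leading up to the statement. So I do not expect a substantive obstacle here—the proposition is best viewed as packaging the preceding computation into a clean statement that can be cited later.
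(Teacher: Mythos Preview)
Your proposal is correct and matches the paper's approach exactly: the paper states the proposition immediately after the sentence ``Equivalently, $F$ and $\Q(\zeta_d)$ commute with each other in $\End(B_d)_\Q$,'' followed by ``We have shown:'', so there is no separate proof beyond the preceding discussion, which is precisely what you reproduce. Your identification of the key inputs (the compatibility of $F_\lambda$ with $\Frob_\lambda$ on $V_{\ell'}(B_d)$, the injectivity of $\End(B_d)_\Q \hookrightarrow \End(V_{\ell'}(B_d))$, and the definition of $f_\lambda$) is exactly how the paper sets things up.
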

\begin{lemma}\label{lemma:Qzd'}
If $n_d = 1$, then $\Q(\zeta_d)' = \Q(\zeta_d)$.
\end{lemma}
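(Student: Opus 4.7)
The plan is to transport the question to the $\ell'$-adic Tate module, where the rank-one hypothesis becomes decisive. First I note that since the subsection is conducted under the standing assumption \eqref{A3}, the torus part $T$ of $B$ vanishes, so $T_d^\vee = 0$ and $t_d = 0$. Consequently $n_d = b_d$, and the hypothesis $n_d = 1$ exactly says that $V_{\ell'}(B_d)$ is free of rank one over the commutative $\Q_{\ell'}$-algebra $R := \Q(\zeta_d) \otimes_\Q \Q_{\ell'}$.

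Next I compute the centralizer of $R$ inside $\End_{\Q_{\ell'}}(V_{\ell'}(B_d))$: any $\Q_{\ell'}$-linear endomorphism that commutes with the action of $R$ is automatically $R$-linear, and $R$-linear endomorphisms of a free rank-one $R$-module are precisely multiplications by elements of $R$. Hence the centralizer of $R$ in $\End_{\Q_{\ell'}}(V_{\ell'}(B_d))$ equals $R$ itself.

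The conclusion will then come from pulling this back through the classical Tate embedding
\[
\End(B_d)_\Q \otimes_\Q \Q_{\ell'} \hookrightarrow \End_{\Q_{\ell'}}(V_{\ell'}(B_d)),
\]
together with the fact that forming a centralizer commutes with flat base change (the centralizer $\Q(\zeta_d)'$ is the kernel of the $\Q$-linear commutator map $[\,\cdot\,,\zeta_d]$ on $\End(B_d)_\Q$, so the kernel commutes with $-\otimes_\Q \Q_{\ell'}$). Thus $\Q(\zeta_d)' \otimes_\Q \Q_{\ell'}$ is contained in the centralizer of $R$ in $\End_{\Q_{\ell'}}(V_{\ell'}(B_d))$, which is $R$. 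This yields
\[
\dim_\Q \Q(\zeta_d)' = \dim_{\Q_{\ell'}} \bigl(\Q(\zeta_d)' \otimes_\Q \Q_{\ell'}\bigr) \leq \dim_{\Q_{\ell'}} R = \varphi(d) = \dim_\Q \Q(\zeta_d),
\]
and combined with the trivial inclusion $\Q(\zeta_d) \subseteq \Q(\zeta_d)'$ gives equality.

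No step is a real obstacle; the two mildly delicate points are the injectivity of the Tate embedding (standard for abelian varieties, since $\End(B_d)$ is a finitely generated $\Z$-module on which no nonzero element is infinitely $\ell'$-divisible) and the compatibility of centralizers with scalar extension, both of which are routine.
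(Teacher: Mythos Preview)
Your proof is correct and follows essentially the same route as the paper's: both exploit that $V_{\ell'}(B_d)$ is free of rank $n_d=1$ over $\Q(\zeta_d)\otimes_\Q\Q_{\ell'}$ to conclude $\Q(\zeta_d)'\otimes_\Q\Q_{\ell'}\subseteq\Q(\zeta_d)\otimes_\Q\Q_{\ell'}$, and hence equality. You have simply made explicit the use of \eqref{A3} (forcing $t_d=0$ so $n_d=b_d$), the Tate embedding, and the compatibility of centralizers with scalar extension, all of which the paper leaves implicit.
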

\begin{proof}
  Certainly $\Q(\zeta_d)$ commutes with itself, and so we need only
  demonstrate that $\Q(\zeta_d)' \subseteq \Q(\zeta_d)$. However,
  $V_{\ell'}(B_d)$ is a free $\Q(\zeta_d) \otimes \Q_{\ell'}$-module
  of rank $n_d = 1$. Hence $\Q(\zeta_d)' \otimes \Q_{\ell'} \subseteq
  \Q(\zeta_d) \otimes \Q_{\ell'}$, which is only possible if
  $\Q(\zeta_d)' \subseteq \Q(\zeta_d)$. \qedhere
\end{proof}

The next proposition demonstrates the implications among
the following conditions:
\begin{quote}
\begin{enumerate}[(C1)]
\item $\gamma_d = 0$ and $g_d \leq 2$,
\item $-1 \pmod{d} \in \langle \ell \pmod{d} \rangle$ in $(\Z/d\Z)^\times$,
\item $B_d$ is supersingular (i.e., isogenous to a product of
  supersingular elliptic curves),
\item $\gamma_d = 0$,
\item $\gamma_d < g_d$ or $g_d = 0$,
\item either $2 \mid f$ or $d \leq 2$,
\item $2 \mid n_d f$,
\item $n_d f_\lambda f_{\lambda/\ell} \neq 1$.
\end{enumerate}
\end{quote}
\begin{proposition}\label{prop:C7-chain}
\begin{enumerate}[(i)]
\item Under assumptions \eqref{A3} and \eqref{A4}, the following
  implications always hold: 
\begin{equation*}
\xymatrix @R=0.33cm @C=0.4cm {
& \mathrm{(C1)} \ar@{=>}[d] & & \\
\mathrm{(C2)} \ar@{=>}[d] & \mathrm{(C3)} \ar@{=>}[r] \ar@{=>}[d] &
\mathrm{(C4)} \ar@{=>}[r] & \mathrm{(C5)} \ar@{=>}[d] \\
\mathrm{(C6)} \ar@{=>}[r]& \mathrm{(C7)} \ar@{=>}[rr]& & \mathrm{(C8)}
}
\end{equation*}
\item Moreover, if $n_d = 1$, we also have \emph{(C2)} $\Rightarrow$
  \emph{(C3)} and \emph{(C7)} $\Rightarrow$ \emph{(C6)}. That is, the
  following implications hold: 
\begin{equation*}
\xymatrix @R=0.33cm @C=0.4cm {
& \mathrm{(C1)} \ar@{=>}[d] & & \\
\mathrm{(C2)} \ar@{=>}[d] \ar@{=>}[r] & \mathrm{(C3)} \ar@{=>}[r] \ar@{=>}[d] &
\mathrm{(C4)} \ar@{=>}[r] & \mathrm{(C5)} \ar@{=>}[d] \\
\mathrm{(C6)} \ar@{<=>}[r] & \mathrm{(C7)} \ar@{=>}[rr]& & \mathrm{(C8)}
}.
\end{equation*}
\end{enumerate}
\end{proposition}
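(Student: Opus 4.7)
The plan is to verify each arrow in the two diagrams of Proposition \ref{prop:C7-chain} individually, drawing on three ingredients: elementary arithmetic in $(\Z/d\Z)^\times$, the Newton polygon classification in dimension $\leq 2$, and the Shimura-Taniyama/Honda-Tate description of supersingular versus ordinary reduction for CM abelian varieties. Throughout, \eqref{A3} and \eqref{A4} allow the setup of the previous subsections to apply with $M' = M$ and $e' = e_{A_{K_\lambda}}$.

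Most arrows are formal. The implication (C3) $\Rightarrow$ (C4) is the definition of supersingularity, and (C4) $\Rightarrow$ (C5) is immediate. For (C1) $\Rightarrow$ (C3), check $g_d \in \{0,1,2\}$: the only symmetric Newton polygon of dimension $\leq 2$ without a slope $0$ is $(1/2,\ldots,1/2)$. For (C2) $\Rightarrow$ (C6), if $d > 2$ then $-1$ has order $2$ in $(\Z/d\Z)^\times$, so its membership in the cyclic $\langle \ell \rangle$ of order $f$ forces $2 \mid f$. For (C6) $\Rightarrow$ (C7), either $2 \mid f$ directly, or $d \leq 2$ and Proposition \ref{prop:nd_for_small_d} supplies $2 \mid n_d$. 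For (C7) $\Rightarrow$ (C8), the relation $f \mid f_\lambda f_{\lambda/\ell}$ (since $\ell^{f_\lambda f_{\lambda/\ell}} \equiv 1 \pmod{d}$) yields $n_d f_\lambda f_{\lambda/\ell} \geq n_d f \geq 2$. In part (ii), the new arrow (C7) $\Rightarrow$ (C6) is trivial under $n_d = 1$, as $2 \mid n_d f$ then reads $2 \mid f$.

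The CM-theoretic arrows --- (C5) $\Rightarrow$ (C8) and (C3) $\Rightarrow$ (C7) of part (i), and (C2) $\Rightarrow$ (C3) of part (ii) --- require more care. For (C5) $\Rightarrow$ (C8) I argue by contrapositive: if $n_d f_\lambda f_{\lambda/\ell} = 1$, then $n_d = f_\lambda = f_{\lambda/\ell} = 1$, so $\kappa(\lambda) = \F_\ell$ and $\ell \equiv 1 \pmod{d}$; with $n_d = 1$, Lemma \ref{lemma:Qzd'} exhibits $B_{0,d}$ as a CM abelian variety by $\Q(\zeta_d)$ over $\F_\ell$, and the complete splitting of $\ell$ in $\Q(\zeta_d)$ forces ordinary reduction via Shimura-Taniyama, giving $\gamma_d = g_d > 0$, which contradicts (C5). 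For (C2) $\Rightarrow$ (C3) under $n_d = 1$, Lemma \ref{lemma:Qzd'} again places $B_{0,d}$ in the CM setting, and the Honda-Tate supersingularity criterion identifies supersingular reduction at $\ell$ with complex conjugation --- the class of $-1$ in $(\Z/d\Z)^\times$ --- lying in the decomposition group $\langle \ell \rangle$, precisely (C2). For (C3) $\Rightarrow$ (C7), the case $n_d$ even is trivial; if $n_d$ is odd and $d > 2$, the $\Q(\zeta_d)$-action on the supersingular $B_d$ (whose endomorphism algebra is $M_{g_d}(D)$ for $D$ the quaternion algebra over $\Q$ ramified only at $\ell$ and $\infty$) combined with a local Brauer-group computation of the invariants of $D \otimes_{\Q} \Q(\zeta_d)$ at primes above $\ell$ forces $f$ to be even.

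The principal obstacle is the CM-theoretic bookkeeping --- making precise the dictionary among the $\Q(\zeta_d)$-action on $B_{0,d}$ coming from the quotient $\Z[M] \twoheadrightarrow \Z[\zeta_d]$, the splitting behavior of $\ell$ in $\Q(\zeta_d)$, and the Newton polygon of $B_d = B_{0,d} \times_{\kappa(\lambda)} \overline{\kappa(\lambda)}$ --- so that Honda-Tate/Shimura-Taniyama can be invoked rigorously. Once this dictionary is installed, each remaining implication reduces to a manipulation internal to $(\Z/d\Z)^\times$ or the Brauer group of $\Q$.
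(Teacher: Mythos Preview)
Your approach is correct and matches the paper's in substance. The paper carries out explicitly what you package as named results: for (C5) $\Rightarrow$ (C8) and (C2) $\Rightarrow$ (C3) under $n_d = 1$, it writes down the prime factorization $(F) = \prod_i \lambda_i^{m_i}$ of the Weil number $F \in \Q(\zeta_d)$ (available via Proposition~\ref{prop:F-in-centralizer} and Lemma~\ref{lemma:Qzd'}), uses $F^cF = \ell^{f_\lambda f_{\lambda/\ell}}$ to determine the $m_i$ according to whether $c$ lies in the decomposition group $D_\ell$, and reads the Newton slopes directly from the $m_i$. This \emph{is} the Honda--Tate computation you invoke; ``Shimura--Taniyama'' is a slight misnomer since no lift to characteristic zero is used or needed. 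For (C3) $\Rightarrow$ (C7), the paper makes your Brauer-group sketch precise via the double-centralizer theorem: the centralizer $\Q(\zeta_d)'$ in $M_{g_d}(D)$ has $\Q(\zeta_d)$-dimension $n_d^2$ and Brauer class $[D \otimes_\Q \Q(\zeta_d)]$, so if the latter is nontrivial (hence of period $2$) one gets $4 \mid n_d^2$; your phrase ``the $\Q(\zeta_d)$-action \ldots\ combined with a local Brauer-group computation'' is exactly this argument, with the centralizer step left implicit. Your outline is a faithful high-level summary of the paper's proof.
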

\begin{remark}
Before starting the proof, we make the following observations:
\begin{itemize}
\item If (C2) does not hold then instead of (C6) we have $2 \mid r :=
  \frac{\varphi(d)}{f}$. 
\item From the definitions, we have $f_\lambda = \frac{f}{(f,
    f_{\lambda/\ell})}$. Note that if $f_{\lambda/\ell} = 1$ and $n_d
  = 1$, then (C8) is equivalent to $d \nmid (\ell-1)$. (This occurs,
  for example, when $K = \Q$, $\lambda = (\ell)$, and $n_d = 1$.)
\end{itemize}
\end{remark}

\begin{proof}
First, let us show that (C2) implies (C6). Notice that $-1 \equiv 1
\pmod{d}$ is equivalent to $1 \leq d \leq 2$. Otherwise, $-1 \pmod{d}
\in (\Z/d\Z)^\times$ has order exactly $2$, and so $2 \mid f$. 

Apart from (C2) and (C6), every condition clearly holds if $n_d = 0$,
so we assume $n_d > 0$ for the remainder. The implications
(C1) $\Rightarrow$ (C3) $\Rightarrow$ (C4) $\Rightarrow$ (C5) are clear.

Assume (C6). If $2 \mid f$, then of course (C7) holds. If $d \leq 2$,
then by Proposition \ref{prop:nd_for_small_d}, $2 \mid n_d$ and again
(C7) holds. If $n_d = 1$, we clearly have the reverse implication. 

Assume (C7). By the remark above, note that $f_\lambda
f_{\lambda/\ell} = \frac{f_{\lambda/\ell}}{(f, f_{\lambda/\ell})} f$, and so
is divisible by $f$. Hence, $2 \mid n_d f_\lambda f_{\lambda/\ell}$, and so
(C8) holds.

Now, assume (C3). We may write $B_d \sim E^{g_d}$ for some
supersingular elliptic curve $E$. Then $\End(B_d)_\Q \cong
M_{g_d}(D)$, where $D = \End(E)_\Q$ is a quaternion algebra over $\Q$
whose set of ramified primes is exactly $\{\ell, \infty\}$. (This
determines $D$ uniquely up to isomorphism.) In particular,
$\End(B_d)_\Q$ is a central simple algebra over $\Q$ and we have
$\dim_\Q \End(B_d)_\Q = \dim_\Q M_{g_d}(D) = 4g_d^2$. As $n_d > 0$, we
have $\Q(\zeta_d) \hookrightarrow \End(B_d)_\Q$. Let $\Q(\zeta_d)'$
denote the centralizer of $\Q(\zeta_d)$ in $\End(B_d)_\Q$. Then, by
\cite[Thm.~10.7.5]{Cohn:1977}, $\Q(\zeta_d)'$ is a central simple
algebra over $\Q(\zeta_d)$, 
\begin{equation*}
\dim_{\Q(\zeta_d)} \Q(\zeta_d)' = \frac{ (2g_d)^2}{\varphi(d)^2} =
n_d^2,
\end{equation*}
and
\begin{equation*}
\End(B_d)_\Q \otimes_\Q \Q(\zeta_d) \cong M_{\varphi(d)} \bigl(
\Q(\zeta_d)' \bigr).
\end{equation*}
However, $\End(B_d)_\Q \otimes_\Q \Q(\zeta_d)$ is also isomorphic to
$M_{g_d} \bigl( D \otimes_\Q \Q(\zeta_d) \bigr)$, and so $\mbox{$D
  \otimes_\Q \Q(\zeta_d)$}$ and $\Q(\zeta_d)'$ are similar (i.e., the
elements of $\Br(\Q(\zeta_d))$ associated to these algebras
coincide). 

If $D \otimes_\Q \Q(\zeta_d)$ splits, i.e., $D \cong
M_2(\Q(\zeta_d))$, let $\mu$ be any prime of $\Q(\zeta_d)$ dividing
$\ell$. Then $f = [\Q(\zeta_d)_\mu : \Q_\ell]$, and by observing the
local invariant at $\ell$ of $[D] \in \Br(\Q)$, we deduce that $2 \mid
f$. On the other hand, if $D \otimes_\Q \Q(\zeta_d)$ does not split
(that is, $D$ remains a division algebra after tensoring with
$\Q(\zeta_d)$), then by the definition of similarity we must have $D
\otimes_\Q \Q(\zeta_d) \subseteq \Q(\zeta_d)'$, and so $2 \mid
n_d$. Thus, (C3) $\Rightarrow$ (C7).

Next, we show that (C5) $\Rightarrow$
(C8). If $n_d \neq 1$, (C8) always holds. So let us assume that $n_d =
1$. By Proposition \ref{prop:F-in-centralizer} and Lemma
\ref{lemma:Qzd'}, we may view $F$ as an element of
$\Q(\zeta_d)$. Since the characteristic polynomial of $F$ has
coefficients in $\Z$ with constant term a power of $\ell$, we see that
\begin{equation}\label{eq:Fmembership} 
F \in \Z[\zeta_d] \cap \Z[\zeta_d][\tfrac{1}{\ell}]^\times.
\end{equation}
Moreover, since the action of $F$ on $V_{\ell'}(B_d)$ is given by the
scalar action of $\Q(\zeta_d) \otimes_\Q \Q_{\ell'}$ on the rank one
free $\Q(\zeta_d) \otimes_\Q \Q_{\ell'}$-module $V_{\ell'}(B_d)$, $F$
must be an $\ell^{f_\lambda f_{\lambda/\ell}}$-Weil number (when
viewed as an element of $\Q(\zeta_d)$). Let $c \in G :=
\Gal(\Q(\zeta_d)/\Q)$ denote the restriction of complex conjugation to
$\Q(\zeta_d)$. We must have $F^c F = \ell^{f_\lambda f_{\lambda/\ell}}$.

As $\ell \nmid d$ by \eqref{A4}, the ideal $(\ell)\Z[\zeta_d]$ splits
as a product of distinct prime ideals in $\Z[\zeta_d]$. Let
$\lambda_1, \ldots, \lambda_r$ be these prime ideals, where $r =
\varphi(d)/f$. By \eqref{eq:Fmembership}, the prime ideal
factorization of the principal ideal $(F)$ must be
\begin{equation*}
(F) = \lambda_1^{m_1} \lambda_2^{m_2} \dots \lambda_r^{m_r}.
\end{equation*}
Let $D_\ell \leq G$ be the decomposition group of $\ell$ (or
equivalently, any of the $\lambda_i$). Under the isomorphism $G \cong
(\Z/d\Z)^\times$, $c$ corresponds to $-1 \pmod{d}$. Further, $D_\ell$
corresponds to the subgroup $\langle \ell \pmod{d} \rangle$. Hence,
the condition $c \in D_\ell$ is equivalent to (C2). So, if $c \in
D_\ell$, we have (C2) $\Rightarrow$ (C6) $\Rightarrow$ (C7)
$\Rightarrow$ (C8).

In case $c \not\in D_\ell$, then $r$ is even, and $\lambda_i^c \neq
\lambda_i$ for every $1 \leq i \leq r$. Relabel the ideals so that
$\lambda_{2j-1}^c = \lambda_{2j}$. Now,
\begin{equation*}
\begin{split}
(\lambda_1 \lambda_2 \cdots \lambda_r)^{f_\lambda f_{\lambda/\ell}} &
= (\ell^{f_\lambda f_{\lambda/\ell}}) \\
& = (F^c \cdot F) = (\lambda_1 \lambda_2)^{m_1 + m_2} \cdots
(\lambda_{r-1} \lambda_r)^{m_{r-1} + m_r}, 
\end{split}
\end{equation*}
and so
\begin{equation*}
f_\lambda \cdot f_{\lambda/\ell} = m_1 + m_2 = \cdots = m_{r-1} + m_r.
\end{equation*}
For the sake of contradiction, suppose (C8) does not hold; i.e.,
$f_\lambda f_{\lambda/\ell} = 1$. Then clearly $m_i = 0$ for exactly
half of the indices $i$. But (under a suitable normalization) the
$\{m_i\}$ represent the slopes of the Newton polygon of $B_d$. More
specifically, the slopes are given by:
\begin{equation*}
\underbrace{\frac{m_1}{f_\lambda f_{\lambda/\ell}}, \cdots,
    \frac{m_1}{f_\lambda f_{\lambda/\ell}}}_{\mbox{$f$ times}},
\underbrace{\frac{m_2}{f_\lambda f_{\lambda/\ell}}, \cdots,
    \frac{m_2}{f_\lambda f_{\lambda/\ell}}}_{\mbox{$f$ times}}, \cdots,
\underbrace{\frac{m_r}{f_\lambda f_{\lambda/\ell}}, \cdots,
    \frac{m_r}{f_\lambda f_{\lambda/\ell}}}_{\mbox{$f$ times}},
\end{equation*}
But this implies the Newton polygon has $f \cdot \frac{r}{2}$ slopes
of value $0$, and (as $n_d = 1$), this implies $\gamma_d = g_d > 0$,
which contradicts (C5). Thus, (C5) $\Rightarrow$ (C8).

Finally, we show, assuming $n_d = 1$, that (C2) $\Rightarrow$
(C3). Assuming (C2), we have $c \in D_\ell$, and $\lambda_i^c =
\lambda_i$ for all $i$. So $(F^c) = (F)$. We consider the prime
factorization:
\begin{equation*}
\begin{split}
(\lambda_1 \lambda_2 \cdots \lambda_r)^{f_\lambda f_{\lambda/\ell}} &
= (\ell^{f_\lambda f_{\lambda/\ell}}) \\
& = (F^c \cdot F) = (F)^2 = \lambda_1^{2m_1} \lambda_2^{2m_2} \cdots \lambda_r^{2m_r}, 
\end{split}
\end{equation*}
which gives $f_\lambda f_{\lambda/\ell} = 2m_i$ for every $i$. Again
we interpret this in terms of the Newton polygon of $B_d$, and see
that every slope is $\frac{m_i}{f_\lambda f_{\lambda/\ell}} =
\frac{1}{2}$. This is equivalent to the condition that $B_d$ is
supersingular, i.e., (C3). \qedhere
\end{proof}

\section{Unconditional Results}

In this section, we apply the results of \S6 to obtain as many
finiteness results as possible without the assumption of GRH.

\subsection{Unconditional finiteness results over $\Q$}
When $[A] \in \mathscr{A}(\Q,g,\ell)$, note that $e = e_{A_{\Q_\ell}}$ always.
Already the finiteness of $\mathscr{A}(\Q,1)$ has been established in 
\cite{Rasmussen-Tamagawa:2008}. The information coming from the
special fiber allows us to settle the conjecture over $\Q$ for $g \leq
3$.
\begin{proposition}
The set $\mathscr{A}(\Q,2)$ is finite.
\end{proposition}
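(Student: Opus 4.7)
The plan is to combine the constraint $m_\Q > 6$ coming from Proposition~\ref{prop:no_small_m} with the much tighter bound on $e = e_{A_{\Q_\ell}}$ that one gets by feeding the special-fibre decomposition of~\S6 into the Tate--Oort analysis of~\S3. Without~\S6 the best available control is $e \mid M'(4) = 2^{9}\cdot 3^{2}\cdot 5$, which permits $m_\Q$ as large as $11520$; using~\S6, the admissible values of $e$ collapse to just three numbers, and the inequality $m_\Q \mid e/2$ then instantly contradicts Proposition~\ref{prop:no_small_m}.

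Concretely, I would first fix $\varepsilon \in (0, 1/12)$ and take $\ell$ above $\max\{C_7(2,1),\, C_8(2,\varepsilon)\}$. Then for any putative class $[A] \in \mathscr{A}(\Q,2,\ell)$ both \eqref{A1} and \eqref{A2} hold, and the remark preceding the $d$-decomposition in~\S6 makes \eqref{A3} and \eqref{A4} automatic. Because $K = \Q$ there is a unique prime $\lambda = (\ell)$ above $\ell$, with $e_{\lambda/\ell} = f_{\lambda/\ell} = 1$ and $e_\lambda = e$; since $\ell > 2g + 1 = 5$, Corollary~\ref{cor:M-cyclic} forces $M = I_\lambda/J_\lambda$ to be cyclic of order~$e$.

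Next, applying Proposition~\ref{prop:nd_for_small_d} with $M' = M$ and $e' = e$ yields
\[
  4 \;=\; \sum_{d \mid e} n_d\, \varphi(d), \qquad e \;=\; \lcm\{\,d : n_d > 0\,\},
\]
with $n_1, n_2$ even; combined with $4 \mid e$ from Lemma~\ref{lem:e-properties}(d), this is a rigid numerical constraint. The condition $\varphi(d) \leq 4$ restricts $d$ to $\{1,2,3,4,5,6,8,10,12\}$, and a short enumeration (organized by the partition $4 = \sum \varphi(d)$ with multiplicities) leaves only $e \in \{4, 8, 12\}$. In all three cases $m_\Q \mid e/2 \leq 6$, which contradicts Proposition~\ref{prop:no_small_m}. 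This shows $\mathscr{A}(\Q,2,\ell) = \varnothing$ for $\ell$ above the explicit bound, and hence $\mathscr{A}(\Q, 2)$ is finite.

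I expect no serious obstacle: every structural ingredient is already in place and the case analysis for $e$ is brief. The one delicate point worth emphasising is that the boundary case $e = 12$ permits $m_\Q$ to reach $6$, so it is really the sixth-root-of-unity argument inside Proposition~\ref{prop:no_small_m} (not merely the Elliott bound for $m \leq 4$) that makes the $g = 2$ conclusion go through; had one only ruled out $m_\Q \leq 4$, the possibility $n_{12} = 1$ (or $n_3 = n_4 = 1$, or $n_4 = n_6 = 1$) would remain.
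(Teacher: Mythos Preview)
Your proposal is correct and follows essentially the same route as the paper: invoke Proposition~\ref{prop:no_small_m} to force $m_\Q > 6$, then use the decomposition $4 = \sum_{d\mid e} n_d\,\varphi(d)$ together with $4 \mid e$ to pin down $e \in \{4,8,12\}$, whence $m_\Q \mid e/2 \leq 6$. The paper organises the enumeration slightly differently (splitting on which of $n_{12}, n_8, n_4$ is positive), but the argument and the key observation that the case $e = 12$ genuinely requires the $m_\Q = 6$ elimination in Proposition~\ref{prop:no_small_m} are the same.
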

\begin{proof}
  Suppose $\ell \gg 0$ and $[A] \in \mathscr{A}(\Q,2,\ell)$. By
  Proposition \ref{prop:no_small_m}, $m_\Q > 6$. But this is
  impossible under the condition $4 = \sum n_d \varphi(d)$. To see
  this, note that
\begin{equation*}
\{d : \varphi(d) \leq 4 \} = \{1, 2, 3, 4, 5, 6, 8, 10, 12 \}.
\end{equation*}
Thus, the condition $4 \mid e$ (Lemma \ref{lem:e-properties}) implies
one of $n_{12}$, $n_8$, $n_4$ must be positive. The only solution with
$n_{12} > 0$ is $4 = \varphi(12)$; in this case, $e = 12$ and $m_\Q
\mid 6$. Likewise, $n_8 > 0$ only for the solution $4 =
\varphi(8)$. In this case, $e = 8$ and $m_\Q \mid 4$. There are a
handful of equations with $n_4 > 0$; each, however, has $e = 4$ or $e
= 12$, and so $m_\Q \mid 6$. By contradiction,
$\mathscr{A}(\Q,2,\ell)$ must be empty.
\end{proof}
\begin{proposition}
The set $\mathscr{A}(\Q,3)$ is finite.
\end{proposition}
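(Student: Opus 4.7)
The plan is to mimic the argument for $\mathscr{A}(\Q,2)$: assume $\ell \gg 0$ and $[A] \in \mathscr{A}(\Q,3,\ell)$, and derive a contradiction. By Proposition \ref{prop:no_small_m} we have $m_\Q > 6$, while Lemma \ref{lem:e-properties}(d) and the observations of \S4.1 give $4 \mid e$ and $m_\Q \mid e/2$. (Since $K = \Q$ has a unique prime $\lambda$ above $\ell$ with $e_{\lambda/\ell}=1$, we simply have $e = e_{A_{\Q_\ell}}$.) Proposition \ref{prop:nd_for_small_d} supplies nonnegative integers $n_d$ (indexed by $d \mid e$) satisfying $6 = \sum_{d \mid e} n_d \varphi(d)$ and $e = \lcm\{d : n_d > 0\}$.

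The divisors $d$ with $\varphi(d) \leq 6$ are $\{1,2,3,4,5,6,7,8,9,10,12,14,18\}$. A short case analysis on the partitions of $6$ by these $\varphi$-values, combined with the requirement $4 \mid e$, shows that the only possible values of $e$ are $\{4, 8, 12, 20, 24\}$; any larger candidate would require either $n_d > 0$ for some $d$ with $\varphi(d) > 6$, or two terms of $\varphi$-value $4$ (totalling at least $8$). For $e \in \{4, 8, 12\}$ one immediately has $m_\Q \mid e/2 \leq 6$, contradicting $m_\Q > 6$. For $e \in \{20, 24\}$ the only feasible configurations are $(n_4, n_5) = (1,1)$ or $(n_4, n_{10}) = (1,1)$ when $e = 20$, and $(n_8, n_3) = (1,1)$ or $(n_8, n_6) = (1,1)$ when $e = 24$.

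To dispose of these remaining cases, note that $m_\Q > 6$, $m_\Q \mid e/2$, and the evenness of $m_\Q$ (Lemma \ref{lemma:mQ}) force $m_\Q = 10$ (when $e = 20$) or $m_\Q = 12$ (when $e = 24$); in particular $\ell \equiv 1 \pmod{m_\Q}$. In each of the four configurations above there is a divisor $d \in \{3, 5, 6, 10\}$ with $n_d = 1$ and $d \mid m_\Q$. Since \eqref{A1} and \eqref{A2} imply \eqref{A3} and \eqref{A4}, and moreover $\gamma_d = 0$ for every $d$, condition (C5) of Proposition \ref{prop:C7-chain} holds. The implication (C5) $\Rightarrow$ (C8) from part (i) then yields $n_d f_\lambda f_{\lambda/\ell} \neq 1$, which (using $f_{\lambda/\ell} = 1$ and $n_d = 1$) says that the order of $\ell$ modulo $d$ is not $1$, i.e.\ $\ell \not\equiv 1 \pmod{d}$. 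This directly contradicts $d \mid m_\Q \mid \ell - 1$.

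The main obstacle compared to the $g = 2$ case is the appearance of the new possibilities $e \in \{20, 24\}$, for which the naive bound $m_\Q \mid e/2$ does not already force $m_\Q \leq 6$; these are eliminated only by invoking the supersingularity/Newton-polygon input encoded in Proposition \ref{prop:C7-chain}(i).
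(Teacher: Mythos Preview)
Your proof is correct and follows essentially the same route as the paper's: both reduce to the four exceptional configurations with $e\in\{20,24\}$, then eliminate each by picking the divisor $d\in\{3,5,6,10\}$ with $n_d=1$ and $d\mid e/2$, and invoking (C5)$\Rightarrow$(C8) from Proposition~\ref{prop:C7-chain} (over $\Q$ one has $f_\lambda f_{\lambda/\ell}=f$) to force $d\nmid\ell-1$, contradicting $d\mid m_\Q$. The paper's write-up is terser but the logic is the same.
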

\begin{proof}
  Suppose $\ell \gg 0$ and $[A] \in \mathscr{A}(\Q,3,\ell)$. As in the
  case $g=2$, the conditions $6 = \sum n_d \varphi(d)$ and $4 \mid e$
  imply $m_\Q \leq 6$, with only four exceptions:
\begin{equation*}
\begin{split}
6 & = \varphi(8) + \varphi(6)  \\
6 & = \varphi(8) + \varphi(3)  \\
6 & = \varphi(4) + \varphi(10) \\
6 & = \varphi(4) + \varphi(5) 
\end{split}
\end{equation*}
So we may assume $A$ has a decomposition corresponding to one of the
above exceptions. For each exception, the only \emph{possible} value
for $m_\Q$ is $\frac{e}{2}$, as every other even divisor of
$\frac{e}{2}$ is at most $6$. However, in each case we observe a
choice of $d$ such that $n_d = 1$ and $d \mid \frac{e}{2}$.  Let $f$
denote the order of $\ell \pmod{d}$. We may be sure that $\gamma_d =
0$ (cf. the discussion of $\ell$-rank in \S3.3). Since $K = \Q$, we
have $f_{\lambda/\ell}f_\lambda = f$. Hence, by Proposition
\ref{prop:C7-chain}, $n_d f \neq 1$, and so $f \neq 1$. This forces $d
\nmid (\ell - 1)$, and so $d \nmid m_\Q$. Consequently, $m_\Q <
\tfrac{e}{2}$, and by contradiction, $\mathscr{A}(\Q,3,\ell) =
\varnothing$.
\end{proof}
Unconditional finiteness in the case $g=4$ is not settled, but we have
the following description of possible decompositions of the special
fiber.
\begin{proposition}
If $\ell \gg 0$ and $[A] \in \mathscr{A}(\Q,4,\ell)$, then the
decomposition of $A$ corresponds to one of the following sums, and 
$\ell$ must satisfy congruence conditions as follows:
\end{proposition}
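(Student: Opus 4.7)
The plan mirrors the strategy used for the propositions covering $\mathscr{A}(\Q,2)$ and $\mathscr{A}(\Q,3)$. Suppose $\ell \gg 0$ and $[A] \in \mathscr{A}(\Q,4,\ell)$. By Lemma \ref{lem:e-properties} we have $4 \mid e = e_{A_{\Q_\ell}}$, and by Proposition \ref{prop:no_small_m} we have $m_\Q > 6$. Since $m_\Q$ is even and divides $e/2$ (as observed in \S4.1), this already forces $e/2$ to admit an even divisor greater than $6$, in particular $e \geq 16$. Moreover, since $K = \Q$ and $\lambda = (\ell)$, we have $f_{\lambda/\ell} = 1$, so the quantity $f_\lambda$ that appears in Proposition \ref{prop:C7-chain} coincides with the order $f$ of $\ell \pmod d$.

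The first step is to enumerate all decompositions $2g = 8 = \sum_d n_d \varphi(d)$ compatible with the constraints above. The admissible part sizes $\varphi(d)$ lie in $\{1,2,4,6,8\}$, with $d$ chosen from the corresponding finite list of integers (e.g.\ $d \in \{15,16,20,24,30\}$ when $\varphi(d) = 8$), subject to the parity condition $2 \mid n_d$ for $d \in \{1,2\}$ from Proposition \ref{prop:nd_for_small_d}. For each partition of $8$, one enumerates all assignments of specific $d$'s to the parts, computes $e = \lcm \{d : n_d > 0\}$, and retains only those candidates with $4 \mid e$ and with $e/2$ possessing an even divisor exceeding $6$.

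For every remaining candidate, one then applies Proposition \ref{prop:C7-chain} to each $d$ with $n_d = 1$. Under hypotheses \eqref{A1} and \eqref{A2}, the discussion preceding Lemma \ref{lem:e-properties} gives $\gamma_d = 0$, so condition (C5) holds; this forces (C8), namely $n_d f_\lambda f_{\lambda/\ell} \neq 1$. With $n_d = 1$ and $f_{\lambda/\ell} = 1$, this reduces to $f \neq 1$, i.e., $d \nmid \ell - 1$. Combined with the fact that any admissible $m_\Q$ is an even divisor of $e/2$ exceeding $6$ and satisfies $m_\Q \mid \ell - 1$, these two demands either clash—eliminating the decomposition—or they combine to yield a precise congruence $\ell \equiv c \pmod{e}$, namely the requirement that $\ell - 1$ be divisible by $m_\Q$ but not by any $d$ having $n_d = 1$.

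The main obstacle is the bookkeeping: different choices of $d$ with the same $\varphi$-value lead to different $e$ and therefore to different constraints, so the enumeration must be systematic and exhaustive. One verifies that the vast majority of candidates collapse either because $4 \nmid e$, because $e/2$ is too small, or because every admissible $m_\Q$ demands $d \mid \ell -1$ for some $d$ that Proposition \ref{prop:C7-chain} forbids. The decompositions that survive, paired with the congruence conditions read off from the surviving $m_\Q$ and the $d \nmid \ell - 1$ conditions for each $d$ with $n_d = 1$, comprise the finite list asserted in the proposition.
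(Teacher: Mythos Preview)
Your proposal is correct and follows essentially the same approach as the paper: enumerate all decompositions $8 = \sum n_d \varphi(d)$ subject to $4 \mid e$, use Proposition~\ref{prop:no_small_m} to discard those with $m_\Q \leq 6$, and for each survivor apply Proposition~\ref{prop:C7-chain} (specifically (C5)$\Rightarrow$(C8)) at each $d$ with $n_d = 1$ to deduce $d \nmid \ell - 1$, which either eliminates the case or pins down the congruence on $\ell$. The paper's proof is terser, working out only the case $n_3 = 2$, $n_8 = 1$ explicitly and leaving the rest to the reader, but the logic is identical to what you have outlined.
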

\begin{figure}[h!]
\begin{center}
\begin{tabular}{rcl}
Sum & & Congruence \\
\hline \hline
$2 \varphi(3) + \varphi(8)$ & & $\ell \equiv 13 \pmod{24}$ \\
$2 \varphi(6) + \varphi(8)$ & & $\ell \equiv 13 \pmod{24}$ \\
$\varphi(16)$ & & $\ell \equiv 9 \pmod{16}$ \\
$\varphi(20)$ & & $\ell \equiv 11 \pmod{20}$ \\
$\varphi(24)$ & & $\ell \equiv 13 \pmod{24}$ 
\end{tabular}
\end{center}
\end{figure}

\begin{proof}
  Take $\ell \gg 0$ and $[A] \in \mathscr{A}(\Q,4,\ell)$. If the
  decomposition of $A$ does not correspond to one of those given in
  the table, then by arguments similar to the previous cases, one may
  show that $m_\Q \leq 6$. However, the five cases above remain
  valid. For example, in case $n_3 = 2$ and $n_8 = 1$, we have $e =
  24$, and the available results do not eliminate the possibility
  $m_\Q = 12$. We may conclude only that $\ell \equiv 1 \pmod{12}$ and
  $\ell \not\equiv 1 \pmod{8}$, i.e., $\ell \equiv 13 \pmod{24}$. The
  congruences in the remaining cases may be deduced similarly.
\end{proof}

\subsection{Finiteness of $\mathscr{A}(K,1)$ when $n_K = 2$}

In the authors' earlier work, it was shown that $\mathscr{A}(K,1)$
must be finite when $K/\Q$ is a quadratic extension, except possibly
when $K$ is imaginary with class number one
\cite[Thm.~4]{Rasmussen-Tamagawa:2008}. There, the proof uses the
result of Momose \cite{Momose:1995} (generalizing the previous work of
Mazur \cite{Mazur:1978}) classifying $K$-rational points on modular
curves. We give a different proof, which removes the exception
for imaginary fields with class number one. The new proof relies
on the theorem of Goldfeld from \S2. 
\begin{proposition}\label{prop:quad-case}
Suppose $n_K = 2$. Then the set $\mathscr{A}(K,1)$ is finite.
\end{proposition}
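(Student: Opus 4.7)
The plan is to combine the numerical constraints on $e$ and $m_\Q$ from Sections 3--4 with Goldfeld's theorem (in the form of Corollary \ref{cor:Goldfeld}) and invoke the Mazur trick (Proposition \ref{prop:Mazur_trick}) to obtain a contradiction. This is the same overall strategy used for $\mathscr{A}(\Q,1)$ in the authors' earlier paper, but Goldfeld now replaces the simpler Elliott-type bound, because what is needed is a small prime that is simultaneously a square residue mod $\ell$ and splits with residue degree one in $K$.

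Concretely, I would first take $\ell$ sufficiently large that $\ell \nmid \Delta_K$ and that \eqref{A2} holds, and suppose for contradiction that $[E] \in \mathscr{A}(K,1,\ell)$. Next I would enumerate the possible decompositions of the special fiber at each $\lambda \mid \ell$: for $g=1$ the formula $2 = \sum_{d \mid e_{E_{K_\lambda}}} n_d \varphi(d)$ of Proposition \ref{prop:nd_for_small_d} restricts $e_{E_{K_\lambda}}$ to $\{1,2,3,4,6\}$. Since $\ell$ is unramified in $K$, Lemma \ref{lem:e-properties} gives $e = \gcd_\lambda e_{E_{K_\lambda}}$ with $4 \mid e$, so the only possibility is $e_{E_{K_\lambda}} = 4$ (with $n_4 = 1$) for every $\lambda \mid \ell$. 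This pins down $e = 4$, and Lemmas \ref{lem:e-properties}(c) and \ref{lemma:mQ} then force $m_\Q = 2$.

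At this point I would apply Corollary \ref{cor:Goldfeld} to produce a rational prime $p < \ell/4$ that is a quadratic residue modulo $\ell$ and satisfies $f_{\frak{p}/p} = 1$ for every prime $\frak{p} \mid p$ of $K$. Fixing such a $\frak{p}$, the fact that $p$ is a square modulo $\ell$ combined with $m_\Q = 2$ gives $\chi(m_\Q)(\Frob_p) = 1$, and since $\Frob_\frak{p}$ may be taken equal to $\Frob_p^{f_{\frak{p}/p}} = \Frob_p$, also $\chi(m_\Q)(\Frob_\frak{p}) = 1$. On the other hand, $f_{\frak{p}/p} = 1$ is odd and $N_{K/\Q}\frak{p} = p < \ell/(4g)$, so Proposition \ref{prop:Mazur_trick} yields $\chi(m_\Q)(\Frob_\frak{p}) \neq 1$. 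These two conclusions contradict each other, so $\mathscr{A}(K,1,\ell)$ must be empty for $\ell \gg 0$.

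The main obstacle is the reliance on Goldfeld's ineffective theorem to locate the small split quadratic residue; this is what prevents an effective bound on $\ell$, and it is also precisely what allows the argument to apply uniformly to all quadratic fields, thereby eliminating the class-number-one exception from \cite{Rasmussen-Tamagawa:2008}. Beyond that, the proof is essentially book-keeping on a short list of admissible special-fiber decompositions.
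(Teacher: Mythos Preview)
Your proposal is correct and follows essentially the same argument as the paper: pin down $e=4$ and $m_\Q=2$ via the decomposition $2=\sum n_d\varphi(d)$ together with $4\mid e$, then use Corollary~\ref{cor:Goldfeld} to obtain a small prime $p$ that is a square mod $\ell$ with $f_{\frak{p}/p}=1$, contradicting Proposition~\ref{prop:Mazur_trick}. One minor caution on your closing commentary: since $C_2=C_2(K)$ depends on $K$, the result is not uniform in the technical sense of Conjecture~\ref{conj:uniform-version}; it merely removes the class-number-one exception.
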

\begin{proof}
  If $\mathscr{A}(K,1)$ is infinite, then we may choose $\ell$ and
  $[A]$ so that $\ell > C_2(K)$, $\ell \nmid \Delta_K$, and $[A] \in
  \mathscr{A}(K,1,\ell)$. The only solution to $2 = \sum n_d
  \varphi(d)$ for which $4 \mid e$ is $2 = \varphi(4)$. So in fact $e
  = 4$ and $m_\Q = 2$. By Corollary \ref{cor:Goldfeld}, there exists
  $p < \frac{\ell}{4}$, which is a square residue modulo
  $\ell$. Moreover, $f_{\frak{p}/p} = 1$ for any prime $\frak{p}$ of
  $K$ above $p$. Consequently, $\chi(2)(\Frob_\frak{p}) =
  \chi(2)(\Frob_p) = 1$. However, by Proposition
  \ref{prop:Mazur_trick}, $\chi(2)(\Frob_\frak{p}) \neq 1$. By
  contradiction, $\mathscr{A}(K,1)$ is finite.
\end{proof}

\begin{remark}
Unfortunately, since Goldfeld's result is not effective, Proposition
\ref{prop:quad-case} is not effective, even for a particular choice of
quadratic field $K$, and cannot be made uniform at present.
\end{remark}

\subsection{Additional finiteness results when $g=1$}

We present two more unconditional finiteness results. The first
establishes finiteness for cubic fields in a uniform manner; that is,
there exists a bound $N$ such that $\ell > N$ implies
$\mathscr{A}(K,1,\ell) = \varnothing$ for any cubic field $K$. The
second result is not uniform, but provides finiteness for
$\mathscr{A}(K,1)$ for any Galois extension $K/\Q$ whose Galois group
has exponent $3$.

We begin with the result for cubic fields. We require an extension of
the result of Proposition \ref{prop:Mazur_trick}. Let $K/\Q$ be a
finite extension, and suppose $[A] \in \mathscr{A}(K,1,\ell)$. Note
that as \eqref{A2} holds for $\ell > C_7$, which depends only on $g$
and $n_K$, we may use the results of \S5 without invalidating
uniformity.

Provided $\ell > 3$, for each prime $\lambda \mid \ell$ in $K$, we
have a decomposition of $V_{\ell'}(A_{K_\lambda})^\mathrm{ss}$ which
yields 
\begin{equation*}
2 = 2g = \!\!\!\!\!\!\!\! \sum_{\phantom{ {}_{A_{K_\lambda}} }d \mid  e_{A_{K_\lambda}}}
\!\!\!\!\!\!\!\! n_d \varphi(d), \qquad e_{A_{K_\lambda}} = \lcm \{ d
: n_d > 0 \}.
\end{equation*}
(Note that the collection $\{n_d \}$ depends on $\lambda$, even though
this is suppressed in the notation.) Necessarily, $e_{A_{K_\lambda}}
\in \{1, 2, 3, 4, 6 \}$.
\begin{proposition}
Conjecture \ref{conj:uniform-version} holds in case $(g,d) = (1,3)$.
\end{proposition}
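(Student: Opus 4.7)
The plan is to assume, for contradiction, that $\mathscr{A}(K,1,\ell)$ is non-empty for some cubic field $K$ and some prime $\ell$ that exceeds certain constants depending only on $g = 1$ and $n_K = 3$, and to derive a contradiction uniformly in $K$ by combining the local analyses of \S3 and \S6, Elliott's estimate (Proposition~\ref{prop:Elliott}), and Mazur's trick (Proposition~\ref{prop:Mazur_trick}), adapted in the hardest case along the lines of Proposition~\ref{prop:no_small_m}.

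First I would constrain $m_\Q$. Since $g = 1$, the decomposition $2 = \sum_d n_d \varphi(d)$ from \S6 restricts each local index to $e_{A_{K_\lambda}} \in \{1, 2, 3, 4, 6\}$. The divisibility $4 \mid e_\lambda = e_{A_{K_\lambda}} \cdot e_{\lambda/\ell}$ from Lemma~\ref{lem:e-properties}(d), combined with $e_{\lambda/\ell} \leq n_K = 3$, gives, after a case analysis of the five possible factorization types of $\ell$ in $K$: unless $\ell$ is totally ramified, some $\lambda \mid \ell$ has $e_{\lambda/\ell} = 1$, forcing $e_{A_{K_\lambda}} = 4$ and hence $e = 4$, so $m_\Q = 2$; in the totally ramified case $\ell = \lambda^3$, $e_{\lambda/\ell} = 3$ forces $e_{A_{K_\lambda}} = 4$, so $e = 12$ and $m_\Q \in \{2, 6\}$. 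In all cases, $m_\Q \in \{2, 6\}$.

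Second, I would invoke Proposition~\ref{prop:Elliott} with $m = 2$ and a fixed $\varepsilon < \tfrac{1}{12}$ to obtain a rational prime $p$ which is a square modulo $\ell$ and satisfies $p < C_1'(2,\varepsilon) \cdot \ell^{1/4 + \varepsilon}$; in particular $p^3 < \ell/4$ for $\ell$ sufficiently large (uniformly in $K$). Since $n_K = 3$ is odd, one of the primes $\frak{p} \mid p$ in $K$ has $f_{\frak{p}/p} \in \{1, 3\}$, and then $q := N_{K/\Q}\frak{p} = p^{f_{\frak{p}/p}}$ satisfies $q \leq p^3 < \ell/4$.

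Finally, I would split into cases on the pair $(f_{\frak{p}/p}, m_\Q)$ to derive a contradiction. When $f_{\frak{p}/p} = 3$, $q = p^3$ is a sixth power modulo $\ell$ (since $p$ is a square and, when $m_\Q = 6$, $6 \mid \ell - 1$), so $\chi(m_\Q)(\Frob_\frak{p}) = 1$ and Proposition~\ref{prop:Mazur_trick} applies. When $f_{\frak{p}/p} = 1$ and $m_\Q = 2$, $\chi(2)(\Frob_\frak{p}) = 1$ since $p$ is a square, and Proposition~\ref{prop:Mazur_trick} again concludes the argument. The principal obstacle is the case $f_{\frak{p}/p} = 1$ with $m_\Q = 6$, since here $p$ need not be a sixth power; in this case I would mimic Proposition~\ref{prop:no_small_m}. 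Because $p$ is a square and $f_{\frak{p}/p}$ is odd, $\varepsilon_{r,s}(\Frob_\frak{p}) \in (\F_\ell^\times)^2 \cap \bmu_{6} = \bmu_{3}$, hence $\varepsilon_{r,s}(\Frob_\frak{p})^3 = 1$. This gives $a_{\frak{p}, 6} \equiv 2q^3 \pmod{\ell}$, and the Weil bound $|a_{\frak{p}, 6}| \leq 2q^3 < \ell/2$ forces $a_{\frak{p}, 6} = 2q^3$. Thus $\alpha_{\frak{p}, r}^6 = q^3$, so $\alpha_{\frak{p}, r} = \eta^{t_r} p^{1/2}$ for some sixth root of unity $\eta$; irrationality of $p^{1/2}$ (since $f_{\frak{p}/p} = 1$) forces $a_{\frak{p}, 3} = 0$, and squaring the resulting congruence yields $0 \equiv 4 q^3 \pmod{\ell}$, hence $\ell \mid p$, contradicting $p < \ell$.
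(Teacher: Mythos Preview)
Your proof is correct and follows the same overall architecture as the paper: constrain $m_\Q \in \{2,6\}$ via the decomposition $2 = \sum n_d\varphi(d)$ together with $4 \mid e$ and the ramification types of $\ell$ in a cubic field, then use Elliott's bound to produce a small quadratic residue $p$ with $p^3 < \ell/4$, lift to $\frak{p}$ with odd residue degree, and derive a contradiction.

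The one substantive difference is how you handle the hardest subcase $m_\Q = 6$, $f_{\frak{p}/p} = 1$. The paper isolates this in a separate proposition (the ``mild extension'' of Proposition~\ref{prop:Mazur_trick}) and exploits a feature specific to $g=1$: the Weil pairing forces $\det\rho_{A,\ell} = \chi$, hence $\varepsilon_{1,1}\varepsilon_{2,2} = 1$, so the two diagonal $\varepsilon$-values at $\Frob_\frak{p}$ are inverse primitive cube roots, giving $a_{\frak{p},2} \equiv -q \pmod{\ell}$ and thus eigenvalues of $\Frob_\frak{p}$ whose minimal polynomial over $\Q$ has degree $4$, an immediate contradiction. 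Your route instead transplants the argument of Proposition~\ref{prop:no_small_m} to $K$: from $\varepsilon_{r,s}(\Frob_\frak{p}) \in \bmu_3$ you compute $a_{\frak{p},6} = 2q^3$, pin down the eigenvalues as sixth roots of unity times $\sqrt{q}$, use the irrationality of $\sqrt{p}$ to force $a_{\frak{p},3} = 0$, and square to get $\ell \mid 4q^3$. Both are valid; the paper's argument uses only $q < \ell/4$ while yours needs $q^3 = p^3 < \ell/4$, but since Elliott already delivers $p^3 < \ell/4$ this costs nothing here. Your version has the advantage of not invoking the determinant condition, so it more transparently parallels the general-$g$ machinery of \S4.
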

\begin{proof}
  In this case, we take $\ell > C_7(1,3) > 3$.  so that the
  above relations hold, and that further $4 \mid e := \gcd
  \{e_{A_{K_\lambda}} e_{\lambda/\ell} : \lambda \mid \ell
  \}$. Additionally, we have $m_\Q \mid (\frac{e}{2}, \ell - 1)$. As
  $K$ is a cubic extension, we have $\sum_{\lambda \mid \ell}
  e_{\lambda/\ell} f_{\lambda/\ell} = n_K = 3$. If there exists a
  prime $\lambda \mid \ell$ for which $e_{\lambda/\ell} = 1$, then as $4
 \mid e_{A_{K_\lambda}} e_{\lambda/\ell}$, we must have
  $e_{A_{K_\lambda}} = 4$, and consequently $e = 4$. The only other
  possibility is that there is a unique prime $\lambda \mid \ell$, for
  which $e_{\lambda/\ell} = 3$. In this case, we have
  $e_{A_{K_\lambda}} = 4$, $e = 12$. Consequently, we may be sure that
  $m_\Q = 2$ or $m_\Q = 6$.

  Choose $0 < \varepsilon < \frac{1}{12}$. By Proposition
  \ref{prop:Elliott}, there exists $p =
  O(\ell^{\frac{1}{4}+\varepsilon})$ such that $\chi(2)(\Frob_p) =
  1$. As $n_K = 3$, we may always choose $\frak{p} \mid p$ in $K$ such
  that $f_{\frak{p}/p} \in \{1, 3 \}$. Let $q =
  \#\kappa(\frak{p})$. We have $q \leq p^3 =
  O(\ell^{\frac{3}{4}+3\varepsilon})$. There is an absolute constant
  $C_{10}$, independent of $K$, for which $\ell > C_{10}$ guarantees
  $q < \frac{\ell}{4}$. Below, we give a mild extension of Proposition
  \ref{prop:Mazur_trick}, showing that $\chi(2)(\Frob_\frak{p}) \neq
  1$. But this is a contradiction, since $\chi(2)(\Frob_\frak{p}) =
  \chi(2)(\Frob_p)^{f_{\frak{p}/p}} = 1$.
\end{proof}
\begin{proposition}
Suppose $n_K = 3$, $\ell > C_7(1,3)$, and $[A] \in
\mathscr{A}(K,1,\ell)$. Let $p$ be a rational prime, and $\frak{p}
\mid p$ a prime in $K$. If $f_{\frak{p}/p}$ is odd and $q =
\#\kappa(\frak{p}) < \frac{\ell}{4}$, then $\chi(2)(\Frob_\frak{p})
\neq 1$. 
\end{proposition}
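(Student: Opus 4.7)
The plan is to argue by contradiction in the style of Proposition \ref{prop:Mazur_trick}, with extra care because only the weaker hypothesis $\chi(2)(\Frob_\frak{p}) = 1$ is assumed, rather than $\chi(m_\Q)(\Frob_\frak{p}) = 1$. Suppose $\chi(2)(\Frob_\frak{p}) = 1$, so that $q \equiv y^2 \pmod{\ell}$ for some $y \in \F_\ell^\times$. By Lemma \ref{structure_lemma}, $\rho_{A,\ell}$ is upper-triangular with diagonal characters $\chi^{i_1}, \chi^{i_2}$; since $\det \rho_{A,\ell} = \chi$, we have $i_1 + i_2 \equiv 1 \pmod{\ell-1}$, so $\varepsilon_{1,2} = \varepsilon_{2,1} = \chi^{i_1 + i_2 - 1} = 1$ automatically and $\varepsilon_{2,2} = \varepsilon_{1,1}^{-1}$.

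The first key step is to pin down $\zeta := \varepsilon_{1,1}(\Frob_\frak{p}) = q^{2i_1-1}$. The analysis used at the start of the preceding proposition gives $e \in \{4, 12\}$, and hence $m_\Q \in \{2, 6\}$. Since $\varepsilon_{1,1}$ has order exactly $m_\Q$, one may write $2i_1 - 1 \equiv (\ell-1)k/m_\Q \pmod{\ell-1}$ with $\gcd(k, m_\Q) = 1$; then $\zeta = y^{2(\ell-1)k/m_\Q}$. For $m_\Q = 2$ this equals $1$ (since $y^{\ell-1} = 1$); for $m_\Q = 6$ it equals $(y^{(\ell-1)/3})^k$, a cube root of unity. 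So $\zeta \in \{1, \omega, \omega^{-1}\}$, where $\omega$ denotes a primitive cube root of unity in $\F_\ell^\times$.

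If $\zeta = 1$, then $\bep(\Frob_\frak{p})$ is trivial, so $\chi(m_\Q)(\Frob_\frak{p}) = 1$; since $f_{\frak{p}/p}$ is odd and $q < \ell/4 = \ell/(4g)$, Proposition \ref{prop:Mazur_trick} applies directly to give the contradiction. Otherwise $\zeta$ is a primitive cube root of unity, so $\zeta + \zeta^{-1} \equiv -1 \pmod{\ell}$, and the Mazur-style trace computation yields
\[ a_{\frak{p}, 2} = \tr(\Frob_\frak{p}^2) \equiv \chi^{2i_1}(\Frob_\frak{p}) + \chi^{2i_2}(\Frob_\frak{p}) = q(\zeta + \zeta^{-1}) \equiv -q \pmod{\ell}. \]
The Weil bound $|a_{\frak{p}, 2}| \leq 2q < \ell/2$ then forces $a_{\frak{p}, 2} = -q$ as an integer. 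Writing $\alpha, \beta$ for the eigenvalues of $\Frob_\frak{p}$ on $V_\ell(A)$, we have $\alpha\beta = q$ (from $\det \rho_{A,\ell} = \chi$) and $\alpha^2 + \beta^2 = -q$, so
\[ a_\frak{p}^2 = (\alpha + \beta)^2 = -q + 2q = q. \]
Since $q = p^{f_{\frak{p}/p}}$ with $p$ prime and $f_{\frak{p}/p}$ odd, $q$ is not a perfect square, contradicting $a_\frak{p} \in \Z$.

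The main obstacle is the reduction $\zeta \in \bmu_3$: without exploiting both that $q$ is a square and the specific form $\varepsilon_{1,1} = \chi^{2i_1-1}$, one could only assert $\zeta \in \bmu_{m_\Q} \subseteq \bmu_6$, and intermediate cases such as $\zeta = -1$ would give $a_{\frak{p}, 2} = -2q$ and $a_\frak{p}^2 = 0$, yielding no contradiction on their own. Once $\zeta$ is constrained to the cube roots of unity, the remainder is a direct variant of the Mazur trick, with the new identity $a_\frak{p}^2 = q$ replacing the original $a_\frak{p}^2 = 0$.
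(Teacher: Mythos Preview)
Your proof is correct and follows the same overall strategy as the paper: assume $\chi(2)(\Frob_\frak{p})=1$, reduce to the case where $\varepsilon_{1,1}(\Frob_\frak{p})$ is a primitive cube root of unity in $\F_\ell^\times$, and then compute $a_{\frak{p},2}\equiv -q\pmod{\ell}$, hence $a_{\frak{p},2}=-q$ by the Weil bound. The only substantive difference is the final contradiction. The paper solves $T^2+qT+q^2=0$ to find that the eigenvalues of $\Frob_\frak{p}^2$ are $\{\zeta q,\zeta^2 q\}$ with $\zeta$ a primitive cube root of unity in $\bar{\Q}$, and then observes that each square root $\pm\zeta^{\pm1}\sqrt{q}$ has minimal polynomial of degree $4$ over $\Q$, impossible for an eigenvalue of a degree-$2$ characteristic polynomial. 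You instead use the identity $a_\frak{p}^2=(\alpha+\beta)^2=(\alpha^2+\beta^2)+2\alpha\beta=-q+2q=q$, which forces the odd prime power $q=p^{f_{\frak{p}/p}}$ to be a perfect square in $\Z$. Your endgame is shorter and more elementary; the paper's has the mild advantage of identifying the eigenvalues explicitly, but that extra information is not used elsewhere.
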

\begin{proof}
By Proposition \ref{prop:Mazur_trick}, we already have
$\chi(m_\Q)(\Frob_\frak{p}) \neq 1$. If $m_\Q = 2$, we are
done. Otherwise, we have $\chi(6)(\Frob_\frak{p}) \neq 1$. For the
sake of contradiction, suppose that $\chi(2)(\Frob_\frak{p}) =
1$. Then $\chi(6)(\Frob_\frak{p})$ has exact order $3$; so 
$\varepsilon_{r,r}(\Frob_\frak{p})$ must be exactly of order $3$ for some
$r \in \{1,2\}$. Since the determinant of $\rho_{A,\ell}$ must be
$\chi$, we have $\chi^{i_1}\chi^{i_2} = \chi$; that is, $i_2 \equiv 1 -
i_1 \pmod{(\ell-1)}$. Hence,
\begin{equation*}
\varepsilon_{1,1}\varepsilon_{2,2} = \chi^{2i_1 - 1}\chi^{2i_2 - 1} =
1.
\end{equation*}
Let $\omega = \varepsilon_{1,1}(\Frob_\frak{p}) \in
\F_\ell^\times$. It is necessarily a primitive cube root of unity in
$\F_\ell$, and so $\varepsilon_{2,2} (\Frob_\frak{p}) =
\omega^2$. Computing the trace of $\Frob_\frak{p}^2$, we have
\begin{equation*}
\begin{split}
a_{\frak{p},2} & = \chi^{i_1}(\Frob_\frak{p}^2) +
\chi^{i_2}(\Frob_\frak{p}^2) \\
& = \left( \varepsilon_{1,1}(\Frob_\frak{p}) +
  \varepsilon_{2,2}(\Frob_\frak{p}) \right)\chi(\Frob_\frak{p}) \\
& \equiv -q \pmod{\ell}.
\end{split}
\end{equation*}
As $|a_{\frak{p},2}| \leq 2q$ by the Weil conjectures and $q <
\frac{\ell}{4}$, we have $a_{\frak{p},2} = -q$.Thus, the
eigenvalues $\alpha_{\frak{p},j}^2$ (in $\bar{\Q}$) for
$\Frob_\frak{p}^2$ are roots of $T^2 + qT + q^2$, hence are $\{\zeta
q, \zeta^2 q \}$, where $\zeta$ denotes a primitive cube root of unity
in $\bar{\Q}$. Consequently, the eigenvalues for $\Frob_\frak{p}$ over
$\bar{\Q}$ are contained in $\{ \pm \zeta\sqrt{q}, \pm \zeta^2\sqrt{q}
\}$, and each of these possible eigenvalues has a minimal polynomial of
degree $4$ over $\Q$. But this is absurd; the eigenvalues should have
a minimal polynomial of degree $2$ over $\Q$. By contradiction,
$\chi(2)(\Frob_\frak{p}) \neq 1$.
\end{proof}
We now turn to the result on Galois extensions of exponent $3$.
\begin{proposition}
Suppose $K/\Q$ is a Galois extension and $\Gal(K/\Q)$ has exponent
$3$. Then the set $\mathscr{A}(K,1)$ is finite.
\end{proposition}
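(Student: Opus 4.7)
The plan is to show $\mathscr{A}(K,1,\ell) = \varnothing$ for every $\ell$ larger than some bound depending on $K$. I would take $\ell > C_7(1,n_K)$ and $\ell \nmid \Delta_K$, and suppose for contradiction that $[A] \in \mathscr{A}(K,1,\ell)$. The first step is to pin down $m_\Q$. For $g=1$, applying Proposition \ref{prop:nd_for_small_d} at each $\lambda \mid \ell$ of $K$ forces $e_{A_{K_\lambda}} \in \{1,2,3,4,6\}$, since the only representations $2 = \sum_d n_d \varphi(d)$ together with $e_{A_{K_\lambda}} = \lcm\{d : n_d>0\}$ produce these values. Because $\ell \nmid \Delta_K$, we have $e_{\lambda/\ell} = 1$ for every $\lambda$, and Lemma \ref{lem:e-properties}(b),(d) then yields $4 \mid e = \gcd\{e_{A_{K_\lambda}}\}$. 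The only element of $\{1,2,3,4,6\}$ divisible by $4$ is $4$ itself, so $e_{A_{K_\lambda}} = 4$ for every $\lambda$ and $e = 4$. Therefore $m_\Q \mid e/2 = 2$, and combining with $2 \mid m_\Q$ from Lemma \ref{lemma:mQ} yields $m_\Q = 2$.

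Next I would fix $0 < \varepsilon < 1/12$ and apply Proposition \ref{prop:Elliott} with $m = 2$ to obtain a rational prime $p < C_1'(2,\varepsilon)\ell^{1/4 + \varepsilon}$ which is a square modulo $\ell$; for $\ell$ sufficiently large, $p^3 < \ell/4$. The exponent-$3$ hypothesis now enters decisively: since every Frobenius element of any subquotient of $\Gal(K/\Q)$ has order dividing $3$, any prime $\frak{p}$ of $K$ above $p$ satisfies $f_{\frak{p}/p} \in \{1,3\}$. In particular $f_{\frak{p}/p}$ is odd and $q := N_{K/\Q}\frak{p} = p^{f_{\frak{p}/p}} \leq p^3 < \ell/4$. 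Since $p$ is a square modulo $\ell$ and $f_{\frak{p}/p}$ is odd, $q$ is itself a square modulo $\ell$, so $\chi(2)(\Frob_\frak{p}) = 1$. But Proposition \ref{prop:Mazur_trick} applied to $\frak{p}$, combined with $m_\Q = 2$, gives $\chi(2)(\Frob_\frak{p}) = \chi(m_\Q)(\Frob_\frak{p}) \neq 1$, the desired contradiction; hence $\mathscr{A}(K,1,\ell) = \varnothing$.

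The main obstacle is precisely the reduction to $m_\Q = 2$. If the assumption $\ell \nmid \Delta_K$ were dropped, the ramification indices $e_{\lambda/\ell}$ in the exponent-$3$ setting could be arbitrary powers of $3$, leaving open the possibility $m_\Q = 2 \cdot 3^c$ for some $c \geq 1$. Elliott's bound $\ell^{(m-1)/4 + \varepsilon}$ is too weak for $m = 2 \cdot 3^c > 2$ to produce a small $m_\Q$-th power residue, and the elementary Mazur-trick argument collapses. However, for any fixed $K$ only finitely many $\ell$ divide $\Delta_K$, so this obstruction evaporates as soon as we allow the lower bound on $\ell$ to depend on $K$, which is why the proposition is stated as a non-uniform finiteness result rather than a uniform one.
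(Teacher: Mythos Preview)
Your proposal is correct and follows essentially the same approach as the paper: pin down $m_\Q = 2$ by combining $4 \mid e$ with $\ell \nmid \Delta_K$ and the $g=1$ classification of $e_{A_{K_\lambda}}$, then use Elliott to find a small prime $p$ which is a square modulo $\ell$, lift to a prime $\frak{p}$ of $K$ with odd residue degree via the exponent-$3$ hypothesis, and contradict Proposition~\ref{prop:Mazur_trick}. Your write-up is in fact more explicit than the paper's at the step deriving $e=4$ and $m_\Q=2$; the only superfluous remark is that the oddness of $f_{\frak{p}/p}$ is not needed to see that $q$ is a square modulo $\ell$ (any power of a square is a square), though of course oddness is essential for the application of Proposition~\ref{prop:Mazur_trick}.
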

\begin{proof}
  Take $\ell \gg 0$. Suppose $[A] \in
  \mathscr{A}(K,1,\ell)$. By avoiding the primes dividing $\Delta_K$,
  we know the decomposition $2g = \sum n_d \varphi(d)$ corresponds to
  $n_4 = 1$, $e = 4$, and $m_\Q = 2$. 

  Let $0 < \varepsilon < \frac{1}{12}$. From Proposition \ref{prop:Elliott},
  we know there exists a prime $p = O(\ell^{\frac{1}{4}+\varepsilon})$
  which is a square modulo $\ell$; hence, $\chi(2)(\Frob_p) = 1$. Let
  $\frak{p}$ be a prime in $K$ above $p$ of norm $q$. Because
  $\Gal(K/\Q)$ is exponent $3$, we must have $f_{\frak{p}/p} \in \{1,
  3\}$. In particular $f_{\frak{p}/p}$ is odd, and $q \leq p^3 =
  O(\ell^{\frac{3}{4}+3\varepsilon}) < \frac{\ell}{4g}$ if we take
  $\ell$ sufficiently large. By Proposition \ref{prop:Mazur_trick}, we
  also have $\chi(2)(\Frob_\frak{p}) \neq 1$. This is a
  contradiction, since $\Frob_\frak{p} = \Frob_p^{f_{\frak{p}/p}}$. By
  contradiction, we have that $\mathscr{A}(K,1,\ell)$ is empty for all
  large $\ell$, and  so $\mathscr{A}(K,1)$ is finite.  
\end{proof}

\section*{Appendix}

In \S2, we noted the existence of a constant $C_6$, such that $\ell >
C_6$ implies
\begin{equation}\label{eq:C6inequality}
\left( \frac{\ell}{4g} \right)^{\frac{1}{n}} > C_3 \cdot (C_4 +
C_5 \log \ell)^2. 
\end{equation}
We now give a brief derivation for a choice of $C_6$. The principal
tool will be the Lambert $W$-function; this is a multivalued complex
function defined as follows: for every $z \in \C$, $W(z)$ is a
solution to the equation $W(z)\exp W(z) = z$. The basic properties of
$W(z)$ that we will need are all thoroughly explained in
\cite{Corless:1996}.

If we restrict our consideration to real values, there are only two
branches of $W$ of interest, which we denote $W_0(x)$ and
$W_{-1}(x)$. The real function $W_0$ is an increasing function defined
on $[-e^{-1},\infty)$, and the real function $W_{-1}$ is a decreasing
function defined on $[-e^{-1},0)$. We have $-1 = W_0(-e^{-1}) =
W_{-1}(-e^{-1})$; there are no real solutions for $x<-e^{-1}$. (All of
this may be observed by na\"ively `inverting' the real function $x(w) =
we^w$.)

\begin{lemma*}
  Suppose $c$ and $N$ are positive constants, and $c \geq
  (\frac{e}{N})^N$. Then the largest real solution to the equation 
$x^\frac{1}{N} = \log(cx)$ is given by
\begin{equation*}
  x_0 = x_0(c,N) = \frac{1}{c} \exp \left( -NW_{-1}(-\frac{1}{N} 
c^{-\frac{1}{N}}) \right).  
\end{equation*}
\end{lemma*}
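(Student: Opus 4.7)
The plan is to reduce the transcendental equation $x^{1/N}=\log(cx)$ to the defining relation $we^{w}=z$ of the Lambert $W$-function, and then identify the branch that produces the largest real solution. Concretely, I would set $w:=-\tfrac{1}{N}x^{1/N}$, so that $x^{1/N}=-Nw$; since a positive solution $x$ forces $\log(cx)>0$, we have $w<0$, and the equation rewrites as
\begin{equation*}
-Nw \;=\; \log c + N\log(-Nw) \;=\; \log c + N\log N + N\log(-w).
\end{equation*}
Dividing by $N$ and exponentiating yields $-w\,e^{w}=\tfrac{1}{N}c^{-1/N}$, i.e., $w\,e^{w}=-\tfrac{1}{N}c^{-1/N}$. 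Thus $w$ is, by definition, a value of $W\!\bigl(-\tfrac{1}{N}c^{-1/N}\bigr)$, and inverting $\log(cx)=-Nw$ gives the stated form $x=\tfrac{1}{c}\exp(-Nw)$.

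Next, I would check the domain hypothesis and pick the correct branch. The real Lambert function is defined precisely on $[-e^{-1},\infty)$, with the two real branches $W_{0},W_{-1}$ meeting on $[-e^{-1},0)$. The condition for $-\tfrac{1}{N}c^{-1/N}$ to lie in this interval is $\tfrac{1}{N}c^{-1/N}\le e^{-1}$, i.e.\ $c^{1/N}\ge e/N$, i.e.\ $c\ge(e/N)^{N}$, which is exactly the hypothesis. The map $w\mapsto \tfrac{1}{c}e^{-Nw}$ is strictly decreasing, so the smaller (more negative) value of $w$ gives the larger $x$. Since $W_{-1}(z)\le -1\le W_{0}(z)$ throughout $[-e^{-1},0)$, the $W_{-1}$-branch yields the larger root
\begin{equation*}
x_{0} \;=\; \tfrac{1}{c}\,\exp\!\left(-N\,W_{-1}\!\left(-\tfrac{1}{N}\,c^{-1/N}\right)\right).
\end{equation*}

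The one place that requires care, and which I expect to be the main bookkeeping step, is confirming that the two Lambert branches account for \emph{all} positive real solutions, so that the $W_{-1}$-value is genuinely the largest one. This is a short calculus check on $f(x):=x^{1/N}-\log(cx)$: one finds $f'(x)=0$ only at $x=N^{N}$, where $f$ attains its minimum value $N-\log c-N\log N$; since $f(0^{+})=+\infty$ and $f(x)\to+\infty$ as $x\to\infty$, the equation $f(x)=0$ has exactly two real roots (or a single double root) precisely when this minimum is $\le 0$, that is, exactly when $c\ge(e/N)^{N}$. Matching these two geometric roots with the two algebraic branches $W_{0},W_{-1}$ completes the proof; everything else is a routine substitution once the change of variable $w=-x^{1/N}/N$ has been made.
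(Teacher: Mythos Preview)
Your proposal is correct and follows essentially the same approach as the paper: your substitution $w=-\tfrac{1}{N}x^{1/N}$ coincides with the paper's $-\tfrac{1}{N}\log(cx)$ (since these are equal on solutions), and both arrive at $we^{w}=-\tfrac{1}{N}c^{-1/N}$ and select the $W_{-1}$-branch by monotonicity. Your added calculus check that $f(x)=x^{1/N}-\log(cx)$ has at most two real zeros is a nice completeness detail that the paper leaves implicit (relying instead on the earlier remark that $W_{0}$ and $W_{-1}$ exhaust the real branches).
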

\begin{proof}
From $x^\frac{1}{N} = \log(cx)$, we have
\begin{equation*}
\begin{split}
  -\frac{1}{N} c^{-\frac{1}{N}} \cdot (cx)^\frac{1}{N} & =
  -\frac{1}{N} \log (cx), \\
  -\frac{1}{N} c^{-\frac{1}{N}} & = -\frac{1}{N} \log (cx) \cdot \exp
  \left( -\frac{1}{N} \log (cx) \right).
\end{split}
\end{equation*}
The assumptions guarantee $-\frac{1}{N} c^{-\frac{1}{N}} \in
[-e^{-1},0)$, so by the definition of $W_j$, we see
$-\frac{1}{N} \log (cx) = W_j(-\frac{1}{N}c^{-\frac{1}{N}})$ for $j
\in \{-1, 0 \}$. Solving for $x$, we find
\begin{equation*}
x = \frac{1}{c} \exp \left( -NW_j(-\frac{1}{N}c^{-\frac{1}{N}}) \right).
\end{equation*}
That the larger solution corresponds to the index $j=-1$ follows from
the fact that ${W_{-1} \leq W_0 < 0}$ on $[-e^{-1},0)$.
\end{proof}
For the remainder, we write $W$ for the specific branch $W_{-1}$. At
any point $x \in (-e^{-1},0)$, differentiating $W(x)e^{W(x)} = x$
yields
\begin{equation*}
W'(x) = \frac{e^{-W(x)}}{1+W(x)} = \frac{1}{x} \cdot \frac{W(x)}{1 + W(x)}.
\end{equation*}
As $W$ is a decreasing function and $y/(1+y)$ is increasing for all $y
\neq -1$, we have for any $x \in (-\frac{1}{4},0)$:
\begin{equation*}
\frac{W(x)}{1 + W(x)} < \frac{W(-\tfrac{1}{4})}{1 + W(-\tfrac{1}{4})} 
\approx 1.867\cdots < 2.
\end{equation*}
We define
\begin{equation*}
  L(x) = \begin{cases} W(-\tfrac{1}{4}) & x \in [-e^{-1},
    -\tfrac{1}{4}) \\
 2 \log(-x) & x \in [-\tfrac{1}{4}, 0) \end{cases}.
\end{equation*}
\begin{lemma*}
For all $x \in [-e^{-1},0)$, $-1 \geq W(x) \geq L(x)$.
\end{lemma*}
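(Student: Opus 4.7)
The plan is to handle the upper and lower bounds separately, then split the lower bound along the piecewise definition of $L$.

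For the upper bound $W(x) \leq -1$ on $[-e^{-1},0)$, I would observe that by definition $W = W_{-1}$ is decreasing on this interval with $W(-e^{-1}) = -1$; the inequality is then immediate.

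For the lower bound on the first piece $x \in [-e^{-1}, -\tfrac{1}{4})$, where $L(x) = W(-\tfrac{1}{4})$ is a constant, I would again use that $W$ is decreasing: if $x < -\tfrac{1}{4}$, then $W(x) \geq W(-\tfrac{1}{4}) = L(x)$.

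The heart of the proof is the lower bound on the second piece $x \in [-\tfrac{1}{4}, 0)$, where $L(x) = 2\log(-x)$. Here I would use the derivative estimate established in the excerpt immediately before the lemma: on $(-\tfrac{1}{4},0)$, we have $0 < \frac{W(x)}{1+W(x)} < 2$, and combining with $\frac{1}{x} < 0$ gives
\begin{equation*}
W'(x) \;=\; \frac{1}{x}\cdot\frac{W(x)}{1+W(x)} \;>\; \frac{2}{x}.
\end{equation*}
Since $\frac{d}{dx}\bigl(2\log(-x)\bigr) = \frac{2}{x}$, the function $x \mapsto W(x) - 2\log(-x)$ is strictly increasing on $(-\tfrac{1}{4},0)$. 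Hence it suffices to verify the inequality at the left endpoint, namely $W(-\tfrac{1}{4}) \geq -2\log 4$.

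To verify this last inequality, I would set $g(w) = we^w$, which is strictly decreasing on $(-\infty,-1]$ (since $g'(w) = (1+w)e^w < 0$ there) and whose inverse on this branch is $W_{-1}$. Because $-2\log 4 < -1$, and
\begin{equation*}
g(-2\log 4) \;=\; -2\log 4 \cdot e^{-2\log 4} \;=\; -\tfrac{\log 4}{8} \;>\; -\tfrac{1}{4} \;=\; g\bigl(W(-\tfrac{1}{4})\bigr),
\end{equation*}
monotonicity of $g$ on $(-\infty,-1]$ forces $-2\log 4 < W(-\tfrac{1}{4})$, as needed. The main obstacle, such as it is, is sign-bookkeeping in the derivative step (one multiplies the negative $\frac{1}{x}$ by a positive quantity smaller than $2$, which flips to a \emph{larger}, i.e.\ less negative, value); once that is handled the argument is purely mechanical.
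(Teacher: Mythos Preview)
Your proof is correct and follows essentially the same route as the paper: monotonicity of $W$ handles the first piece, and for the second piece you show $W(x)-2\log(-x)$ is increasing via the derivative bound $\tfrac{W(x)}{1+W(x)}<2$ established just before the lemma, reducing to the endpoint check $W(-\tfrac14)>-2\log 4$. The only difference is that the paper simply asserts $f(-\tfrac14)>0$, whereas you supply a verification via the inverse map $g(w)=we^w$; this is a welcome addition, not a departure.
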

\begin{proof}
  As $W$ is decreasing, the inequality is clear for $x \leq
  -\frac{1}{4}$. Now, set $f(x) = W(x) - L(x)$, and observe
  $f(-\frac{1}{4}) > 0$. For $x > -\frac{1}{4}$, we have
\begin{equation*}
f'(x) = \frac{1}{x} \cdot \left(\frac{W(x)}{1+W(x)} - 2\right) > 0,
\end{equation*}
since $x < 0$ and $W(x)/(1+W(x)) < 2$. So $W(x) \geq L(x)$, as claimed.
\end{proof}
\begin{lemma*}
Suppose $c \geq (\frac{4}{N})^N$. Then $x_0 \leq c \cdot N^{2N}$.
\end{lemma*}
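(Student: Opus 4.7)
The plan is to apply the previous lemma bounding $W=W_{-1}$ below by the auxiliary function $L$, with the hypothesis $c\geq (4/N)^N$ ensuring that the argument of $W$ lands in the regime where the simpler branch $L(x)=2\log(-x)$ is available.

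First I would observe that the hypothesis $c \geq (4/N)^N$ is precisely equivalent to $c^{1/N}\geq 4/N$, hence to $\tfrac{1}{N}c^{-1/N}\leq \tfrac{1}{4}$. Setting $x:=-\tfrac{1}{N}c^{-1/N}$, this places $x$ in $[-\tfrac{1}{4},0)$, so by the preceding lemma
\begin{equation*}
W(x)\;\geq\; L(x)\;=\;2\log(-x)\;=\;2\log\!\left(\tfrac{1}{N}c^{-1/N}\right)\;=\;-2\log N-\tfrac{2}{N}\log c.
\end{equation*}
(The hypothesis also guarantees $c\geq (e/N)^N$, so that the formula for $x_0$ from the first lemma of the appendix is applicable.)

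Next I would multiply by $-N$ and exponentiate to obtain
\begin{equation*}
-NW(x)\;\leq\;2N\log N+2\log c\;=\;\log\!\left(N^{2N}c^{2}\right),
\end{equation*}
so that $\exp(-NW(x))\leq N^{2N}c^{2}$. Substituting into
\begin{equation*}
x_0 \;=\; \frac{1}{c}\exp\!\bigl(-N W(-\tfrac{1}{N}c^{-1/N})\bigr)
\end{equation*}
immediately yields $x_0\leq \tfrac{1}{c}\cdot N^{2N}c^{2}=c\cdot N^{2N}$, as desired.

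There is no real obstacle here: once the hypothesis is recognized as the condition that puts the argument into the domain of validity of the inequality $W(x)\geq 2\log(-x)$, the rest is a one-line computation. The only thing to watch is that the two hypotheses on $c$ in the two lemmas are compatible, which they are since $(4/N)^N\geq (e/N)^N$ for every $N\geq 1$; thus the formula $x_0=\tfrac{1}{c}\exp(-NW_{-1}(-\tfrac{1}{N}c^{-1/N}))$ is indeed the largest real solution to $x^{1/N}=\log(cx)$, and the estimate above bounds it.
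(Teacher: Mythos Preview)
Your proof is correct and follows essentially the same approach as the paper: recognize that the hypothesis $c\geq(4/N)^N$ places $-\tfrac{1}{N}c^{-1/N}$ in $[-\tfrac14,0)$, apply the bound $W(x)\geq 2\log(-x)$ from the preceding lemma, and simplify. Your write-up is simply a more detailed version of the same one-line computation, with the added (correct) remark that $(4/N)^N\geq(e/N)^N$ guarantees the first appendix lemma applies as well.
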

\begin{proof}
  We have $-\frac{1}{N} c^{-\frac{1}{N}} > -\frac{1}{4}$, and so, combining
  the previous Lemmas,
\begin{equation*}
  x_0 \leq \frac{1}{c} \exp \left(-N \cdot 2 \log \left( \frac{1}{N} 
c^{-\frac{1}{N}} \right) \right) = c \cdot N^{2N}. \qedhere
\end{equation*}
\end{proof}
\begin{corollary*}\label{cor:C6bound}
If a positive integer $\ell$ satisfies \eqref{eq:C6inequality}, then
\begin{equation*}
\ell \leq 16g^2C_3^{2n}C_5^{4n}(2n)^{4n}\exp \left( \frac{C_4}{C_5} \right).
\end{equation*}
\end{corollary*}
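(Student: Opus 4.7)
The plan is to interpret the corollary in its evident logical direction — namely, that any positive integer $\ell$ which \emph{fails} the strict inequality \eqref{eq:C6inequality}, i.e.\ satisfies $\bigl(\tfrac{\ell}{4g}\bigr)^{1/n} \leq C_3(C_4+C_5\log\ell)^2$, is bounded by the stated quantity — and then to reduce this inequality to the canonical form $v^{1/N}\leq\log(cv)$ handled by the three preceding lemmas.

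First I would take square roots of the reversed inequality and rewrite $C_4+C_5\log\ell = C_5\log\bigl(e^{C_4/C_5}\ell\bigr)$, absorbing the additive constant into the logarithm. Introducing the substitutions
\[
u := e^{C_4/C_5}\ell,\qquad N := 2n,\qquad A := \sqrt{C_3}\,C_5\,(4g)^{1/(2n)}e^{C_4/(2nC_5)},
\]
the hypothesis becomes $u^{1/N}\leq A\log u$. Setting $v := u/A^N$ and $c := A^N$, this rewrites as $v^{1/N}\leq\log(cv)$, which is precisely the situation of the first and third lemmas.

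Next, I would observe that the function $x \mapsto \log(cx) - x^{1/N}$ tends to $-\infty$ as $x\to 0^+$ and as $x\to\infty$, so the region where it is non-negative is a bounded interval capped above by the largest real solution $x_0(c,N)$ of $x^{1/N}=\log(cx)$. Consequently $v\leq x_0(c,N)$. Under the standing conventions $C_3\geq 1$, $C_5\geq 1$, $g\geq 1$, $C_4\geq 0$, one checks $c=A^N\geq(4/N)^N$ directly (trivial for $n\geq 2$, where $(4/N)^N=(2/n)^{2n}\leq 1\leq A^N$; for $n=1$, the factor $(4g)^{1/2}\geq 2$ in $A$ gives $c=A^2\geq 4$). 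Hence the third lemma applies and yields $v\leq c\cdot N^{2N}$.

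Substituting back,
\[
\ell = u\,e^{-C_4/C_5} = A^N v\,e^{-C_4/C_5} \leq A^{2N} N^{2N} e^{-C_4/C_5},
\]
and a direct expansion via $A^2 = C_3\,C_5^2\,(4g)^{1/n} e^{C_4/(nC_5)}$ and $N=2n$ collapses to $A^{2N} = 16g^2 C_3^{2n} C_5^{4n} e^{2C_4/C_5}$, producing the claimed bound $16g^2 C_3^{2n} C_5^{4n}(2n)^{4n} e^{C_4/C_5}$. The only real challenge is the careful bookkeeping of exponents through the nested substitutions; all the genuinely analytic content lives in the Lambert-$W$ estimates already established, and the corollary is essentially a mechanical packaging of those estimates into the notation of \S2.
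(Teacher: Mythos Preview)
Your proposal is correct and is essentially the paper's own argument. Your intermediate variable $u$ followed by $v=u/A^{N}$ with $c=A^{N}$ unwinds to exactly the paper's direct substitution $x=\ell/(4gC_3^{n}C_5^{2n})$ and $c=4gC_3^{n}C_5^{2n}e^{C_4/C_5}$, after which both proofs invoke the third lemma identically; your added remarks on the logical direction of the hypothesis and on why the solution set of $v^{1/N}\leq\log(cv)$ is bounded above by $x_0$ make explicit two points the paper leaves tacit.
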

\begin{proof}
We rewrite the inequality \eqref{eq:C6inequality} as
\begin{equation*}
  x^{\frac{1}{N}} > \log (cx), \qquad 
  x = \frac{\ell}{4gC_3^nC_5^{2n}}, \quad N = 2n, \quad
  c = 4gC_3^nC_5^{2n} \exp \left( \frac{C_4}{C_5} \right).
\end{equation*}
We observe that $c \geq 4 \geq (\frac{4}{N})^N$. Thus, $x \leq x_0
\leq c \cdot N^{2N}$, and the claim follows immediately.
\end{proof}

\bibliographystyle{alpha}
\bibliography{Ras-Tam-bib}

\end{CJK}
\end{document}